\documentclass[11pt,reqno]{amsart}
\usepackage[a4paper,margin=25mm]{geometry}
\usepackage{amsmath,amssymb,mathtools}
\usepackage[colorlinks=true,linkcolor=blue,citecolor=blue,urlcolor=blue]{hyperref}
\allowdisplaybreaks[2]
\newtheorem{theorem}{Theorem}[section]
\newtheorem{lemma}[theorem]{Lemma}
\newtheorem{proposition}[theorem]{Proposition}

\theoremstyle{remark}

\newcommand{\R}{\mathbb R}
\newcommand{\Sph}{S^{n-1}}
\newcommand{\dd}{\,d}
\newcommand{\diver}{\operatorname{div}}
\newcommand{\Lip}{\operatorname{Lip}}
\newcommand{\supp}{\operatorname{supp}}
\newcommand{\headnorm}{\|H_+\|_{L^\theta(\Sph)}}
\newcommand{\forcenorm}{\|f\|_{L^\infty(\Sph)}}
\newcommand{\energyintegral}{\int_{\Sph}\bigl(|\nabla_{\Sph}u|^2+(n-3)|u|^2\bigr)\dd\sigma}
\newcommand{\growthsup}[1][3+1/n-n]{\sup_{\substack{\omega\in\Sph\\0<\rho\le\rho_0}}\rho^{#1}\int_{B^{\Sph}_\rho(\omega)}|u|^2\dd\sigma}
\title[Homogeneous stationary Navier--Stokes solutions]{Homogeneous solutions of the stationary Navier--Stokes equations in arbitrary dimension}
\author{Qi Ma}
\address{Department of Mathematics, Rutgers University--New Brunswick, Hill Center, 110 Frelinghuysen Road, Piscataway, NJ 08854, USA}
\email{qi.ma@rutgers.edu}
\thanks{The author is partially supported by NSF Grant DMS-2247410.}
\date{October 7, 2026}
\subjclass[2020]{35Q30, 35C06, 76D05}
\keywords{Stationary Navier--Stokes equations, homogeneous solutions, Bernoulli function}
\hypersetup{pdftitle={Homogeneous solutions of the stationary Navier-Stokes equations in arbitrary dimension},pdfauthor={Qi Ma}}
\makeatletter
\renewcommand{\@settitle}{%
  \begingroup
  \raggedright\normalfont\Large\bfseries
  \noindent\@title\par
  \endgroup
  \vskip1.5em
}
\renewcommand{\@setauthors}{%
  \begingroup
  \raggedright\normalfont\normalsize
  \author@andify\authors
  \noindent{\bfseries\authors}\textsuperscript{*}\par
  \vskip0.6em
  \small
  \def\author##1{}%
  \def\address##1##2{\noindent##2\par}%
  \def\email##1##2{\noindent E-mail: \href{mailto:##2}{##2}\par}%
  \addresses
  \endgroup
}
\renewcommand{\@setthanks}{%
  \begingroup
  \def\thanks##1{\par\noindent\textsuperscript{*}##1\par}%
  \thankses
  \endgroup
}
\let\@setaddresses\relax
\makeatother
\begin{document}
\begin{abstract}
For every integer $n\ge4$, we prove the existence of a $(-1)$-homogeneous solution of the stationary incompressible Navier--Stokes equations on $\R^n\setminus\{0\}$ with an arbitrary locally Lipschitz $(-3)$-homogeneous force. The pressure is $(-2)$-homogeneous, and the solution is regular away from the origin. This extends the homogeneous existence theorem of Bang, Gui, Liu, Wang, and Xie beyond dimension sixteen. The principal estimate combines a subcritical localized pressure identity with an elementary dyadic estimate for an auxiliary tangential vector field in a suitable finite Lebesgue space. Testing the Bernoulli equation at the homogeneous cancellation exponent then controls its positive part in $L^{\frac{(n-2)(n-1)}{2(n-3)}}(S^{n-1})$. The resulting estimate closes with the energy inequality in every fixed dimension. We also establish uniqueness throughout the classical homogeneous class for sufficiently small force. Using the same ideas, we also give a simplified proof of the known existence theorem for bounded compactly supported forces on $\R^n$, $n\ge5$. A general finite-exponent estimate for a scalar equation with divergence-free drift, together with a Euclidean version of the dyadic kernel estimate, bounds the positive Bernoulli function without BMO estimates or an iteration to obtain its supremum norm.
\end{abstract}
\maketitle

\section{Introduction and main results}\label{sec:introduction}

We consider the stationary incompressible Navier--Stokes equations
\begin{equation*}
 -\Delta u+(u\cdot\nabla)u+\nabla p=f,
 \qquad \diver u=0
 \quad\text{in }\R^n\setminus\{0\}.
\end{equation*}
The system is invariant under the scaling
\[
 u(x)\mapsto \lambda u(\lambda x),\qquad
 p(x)\mapsto \lambda^2p(\lambda x),\qquad
 f(x)\mapsto \lambda^3f(\lambda x),\qquad \lambda>0.
\]
A function $g$ is homogeneous of degree $\alpha$ if $g(\lambda x)=\lambda^\alpha g(x)$ for every $\lambda>0$ and $x\in\R^n\setminus\{0\}$. This scaling leads to the question of whether every $(-3)$-homogeneous force admits a $(-1)$-homogeneous stationary solution.

We first recall some developments on the existence and regularity of stationary solutions without a homogeneity assumption. Struwe \cite{Struwe} established a priori estimates and proved the existence of regular solutions on $\R^5$ and on the five-dimensional torus for suitable regular forcing. Frehse and R\r{u}\v{z}i\v{c}ka obtained regular periodic solutions for $5\le n\le15$ \cite{FR95Periodic,FR95} and solutions regular in the interior of bounded six-dimensional domains with homogeneous Dirichlet boundary conditions \cite{FR96Six}. Their weighted estimates and subsequent regularity criteria are developed further in \cite{FR96Weighted,FR98Criterion,FR98Survey}.

For the whole-space problem, Li and Yang \cite{LY} proved in 2022 the existence of regular solutions on $\R^n$, $5\le n\le15$, for arbitrary bounded compactly supported forces, with decay at infinity. Their weighted pressure estimate, local regularity criterion, and construction by Leray--Schauder degree theory are used in the present paper.

We next recall work on homogeneous solutions. \v{S}ver\'ak \cite{Sverak} showed that every nonzero smooth $(-1)$-homogeneous solution of the unforced stationary Navier--Stokes equations on $\R^3\setminus\{0\}$ is a Landau solution. Li Li, YanYan Li, and Xukai Yan studied axisymmetric $(-1)$-homogeneous solutions in dimension three with isolated singularities on the unit sphere, beginning with the case of one singularity in \cite{LLY1}. Their 2018--2019 series \cite{LLY1,LLY2,LLY3} includes classification results for solutions without swirl and existence results for solutions with nonzero swirl.

For the forced homogeneous problem, Shi \cite{Shi} proved in 2018 the existence of $(-1)$-homogeneous weak solutions in dimension $n=4$ for arbitrary $(-3)$-homogeneous forces in $L^{\frac{4}{3},\infty}(\R^4)$, without a smallness assumption.

Bang, Gui, Liu, Wang, and Xie \cite{B} proved in 2025 the existence of $(-1)$-homogeneous solutions regular away from the origin for arbitrary locally Lipschitz $(-3)$-homogeneous forces when $4\le n\le16$. Their argument combines a relation between the radial velocity and the Bernoulli function with a power test on the sphere. They also establish uniqueness among all classical homogeneous solutions for sufficiently small force and prove existence in every dimension $n\ge4$ for nonnegative radial forces.

For the non-homogeneous whole-space problem, Wang and Yang \cite{WY} extended Li and Yang's existence theorem to every $n\ge5$ in 2026 and also treated periodic problems and a larger class of whole-space forces. Their improvement of the Bernoulli estimate motivates our argument. We combine the energy and Bernoulli estimates of Bang et al.\ with a finite-exponent estimate for an auxiliary vector field to remove the upper dimension restriction in the homogeneous existence theorem.

\begin{theorem}\label{thm:main}
Let $n\ge4$ be an integer and let $f:\R^n\setminus\{0\}\to\R^n$ be locally Lipschitz and homogeneous of degree $-3$.
\begin{enumerate}
\item There exists a classical solution $(u,p)$ of the stationary Navier--Stokes system above such that
\[
 u(\lambda x)=\lambda^{-1}u(x),\qquad p(\lambda x)=\lambda^{-2}p(x),\qquad \lambda>0.
\]
For every $0<\alpha<1$,
\begin{equation*}
 \|u\|_{C^{2,\alpha}(\Sph)}+\|p\|_{C^{1,\alpha}(\Sph)}
 \le C\bigl(n,\alpha,\|f\|_{\Lip(\Sph)}\bigr).
\end{equation*}
In particular, the solution is classical on $\mathbb{R}^n\setminus\{0\}$. If $f$ is smooth there, then $u$ and $p$ are smooth there.
\item There exists $\varepsilon_n>0$ such that, if $\|f\|_{\Lip(\Sph)}\le\varepsilon_n$, the velocity is unique among all degree $-1$ homogeneous $C^2$ solutions on $\mathbb{R}^n\setminus\{0\}$. The degree $-2$ pressure is uniquely determined.
\end{enumerate}
\end{theorem}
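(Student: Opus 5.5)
The plan is to prove part (1) by Leray--Schauder degree theory on the sphere, following Bang et al.\ \cite{B}. Because of homogeneity, the system on $\R^n\setminus\{0\}$ becomes an elliptic system on $\Sph$ for the restrictions $u|_{\Sph}$ and $p|_{\Sph}$. We embed it in the family $-\Delta u+t(u\cdot\nabla)u+\nabla p=tf$, $t\in[0,1]$. Here the inverse Stokes operator restricted to $(-1)$-homogeneous divergence-free fields is compact on $C^{1,\alpha}(\Sph)$. At $t=0$ the only solution is $u=0$, so the degree equals $1$. Existence therefore reduces to an a priori bound $\|u\|_{C^{1,\alpha}(\Sph)}\le C(n,\alpha,\|f\|_{\Lip(\Sph)})$ that is uniform in $t$. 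Once an $L^\infty$ bound is available, the higher bounds follow from Schauder theory for the Stokes system on $\Sph$ with a bootstrap. This gives the $C^{2,\alpha}\times C^{1,\alpha}$ estimate, and smoothness when $f$ is smooth.

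The core of the argument is the $L^\infty$ (or critical Morrey) bound. First I would test the equation with $u$ on the sphere. By homogeneity, the convection term reduces to a boundary-type contribution involving the Bernoulli function $H=\tfrac12|u|^2+p$. This yields the energy inequality
\[
\energyintegral\le C\forcenorm\|u\|_{L^1}+C\int_{\Sph}H_+\,\sigma\text{-weighted terms},
\]
so one needs to control $\headnorm$ with $\theta=\frac{(n-2)(n-1)}{2(n-3)}$. For that, $H$ satisfies a drift equation $-\Delta H+u\cdot\nabla H=-|\omega|^2+f\cdot u$. The negative vorticity term lets one test with powers $H_+^{q}$ and weights $|x|^{\gamma}$, with $\gamma$ chosen at the homogeneous cancellation exponent, so that the radial integral degenerates and only spherical integrals remain. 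The drift is not small, so the problematic term is the pressure. I would use the subcritical localized pressure identity and write $p$ through the Riesz transforms of $u\otimes u$. Part of the drift contribution is then expressed through an auxiliary tangential vector field. I would bound this field in a finite Lebesgue space $L^{s}(\Sph)$ by the elementary dyadic decomposition of the kernel, summing the annuli geometrically using the growth quantity $\growthsup$. The result should be $\headnorm\le C(1+\forcenorm)^{a}\,\Bigl(\energyintegral\Bigr)^{b}$ with $b<1$. This exponent inequality must be checked to hold for every fixed $n$, since it is exactly where \cite{B} needed $n\le16$.

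Once the energy bound closes, $u$ is bounded in $H^1(\Sph)$. Local regularity then gives $L^\infty$ control, provided one has smallness of the scale-invariant Morrey quantity on small caps. This comes from combining the Li--Yang local regularity criterion with the $\rho^{3+1/n-n}$ growth bound obtained in the previous step, which is better than critical. I expect the main obstacle to be the dyadic estimate for the auxiliary field together with the exponent bookkeeping showing the $H_+$ estimate is genuinely sublinear in the energy for all $n\ge4$.

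For part (2), the plan is to first show that any $(-1)$-homogeneous $C^2$ solution with small $\|f\|_{\Lip}$ is small, say $\|u\|_{C^1(\Sph)}\le C\varepsilon_n$. I would argue this by the same a priori estimates, applied without assuming $u$ was constructed. The estimates are quantitative in $f$, so a continuity/contradiction argument handles this; alternatively one can use the $f=0$ Liouville-type fact that the energy identity plus the Bernoulli bound force $u=0$, with compactness. Then the difference $w=u_1-u_2$ of two solutions satisfies a linear Stokes system on $\Sph$ with small coefficients. Testing with $w$ and using the coercivity of $\int(|\nabla w|^2+(n-3)|w|^2)$ for $n\ge4$ gives $w=0$. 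The pressure is then determined by $\nabla p$, because a $(-2)$-homogeneous constant vanishes.
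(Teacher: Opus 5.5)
Your outline names the right ingredients, but two steps fail as stated. First, the regularity step: the cap growth bound is \emph{weaker} than critical, not better. It says $\int_{B^{\Sph}_\rho(\omega)}|u|^2\dd\sigma\le C\rho^{\,n-3-1/n}$. The scale-invariant quantity on the $(n-1)$-dimensional sphere is $\rho^{3-n}\int_{B^{\Sph}_\rho(\omega)}|u|^2\dd\sigma$, and your bound only controls it by $C\rho^{-1/n}$, which is unbounded as $\rho\to0$. No smallness of a critical Morrey quantity can come out of Lemma~\ref{lem:local-pressure}, because its coefficient $s(n-4-s)/2$ vanishes exactly at the critical exponent $s=n-4$. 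The paper uses the Morrey bound only inside the Bernoulli estimate. Regularity instead comes from feeding $\headnorm$, with $\theta=\frac n2+\frac1{n-3}>\frac n2$, into Lemma~\ref{lem:local-regularity} (with $\gamma=\theta$). This is combined with the $H^1(\Sph)$ bound on $u$ and a $W^{1,\frac n{n-1}}$ bound on $p$, which comes from the spherical pressure equation $\bigl(-\Delta_{\Sph}+2(n-4)\bigr)p=\diver_{\Sph}N^t+(n-4)N^r$.

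Second, the Bernoulli step is mis-assembled. The drift is harmless. Testing with $H_+^{(n-4)/2}$ gives $\int_{\Sph}(u^t\cdot\nabla_{\Sph}H)H_+^{(n-4)/2}\dd\sigma=2\int_{\Sph}u^rH_+^{(n-2)/2}\dd\sigma$ by $\diver_{\Sph}u^t=-(n-2)u^r$, and this cancels the reaction term $-2u^rH$ exactly. Homogeneity fixes this power; a radial weight plays no role. The real obstruction is the source $f\cdot u-\diver f$ (you dropped $-\diver f$), and the auxiliary field is built from it, not from the drift or from a Riesz representation of $p$. Set $g=|f||u|+(n-4)|f^r|$ and $F=-\nabla_{\Sph}(-\Delta_{\Sph})^{-1}(g-\bar g)-f^t$. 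The source is then at most $\bar g+\diver_{\Sph}F$, and $F$ pairs with $H_+^{(n-6)/4}\nabla_{\Sph}H_+^{(n-2)/4}$. Sobolev and H\"older with $q=\frac{(n-2)(n-1)}{3n-8}>2$ give $\headnorm\le C(\bar g+\|F\|_{L^q})$; this needs $n\ge7$, and lower dimensions come from \cite{B}. The pressure identity enters only through the cap growth of $|u|^2$. That growth controls $g$ on caps, and hence $\|F\|_{L^q}$ by Lemma~\ref{lem:sphere-flux}. Finally, $b<1$ is not the right closure condition. With $E$ the energy integral, the Bernoulli bound reads $\headnorm\lesssim\forcenorm\bigl(1+E^{1/2}+\headnorm^{1/2}+\forcenorm^{1/2}\bigr)$. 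An energy bound of order $\headnorm^2$ would therefore leave an unabsorbable term $\forcenorm\headnorm$. You need the positive-part test $\int_{\Sph}H(u^r)_+\dd\sigma\ge-\frac12\int_{\Sph}f^r(u^r)_+\dd\sigma$ to remove $H_-$ from the signed term $(n-4)\int_{\Sph}Hu^r\dd\sigma$. You also need the radial-mean identity \eqref{eq:radial-mean}, giving $\|u^r\|_{L^2}\lesssim(\headnorm+\forcenorm)^{1/2}$, to reach $E\lesssim\forcenorm^2+(\headnorm+\forcenorm)^{3/2}$. Only then does the right side involve just $\headnorm^{1/2}$ and $\headnorm^{3/4}$, so that Young's inequality closes. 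In the uniqueness part, testing the difference equation with $w$ on $\Sph$ leaves the nonvanishing term $(n-4)\int_{\Sph}\pi w^r\dd\sigma$, which must be estimated. The paper avoids this by using the representation $u=T(1,u)$ and a contraction.
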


\noindent\emph{Independent work.}
Shortly before the author was about to post this manuscript on arXiv, we became aware that Gui, Liu, Wang, and Xie had posted the preprint \cite{GLWX26}, which establishes the same extension of the homogeneous existence theorem to arbitrarily high dimensions, together with uniqueness for small forces. The author then promptly posted the present manuscript on arXiv. The corresponding results in Theorem~\ref{thm:main} were obtained independently of \cite{GLWX26}.

The a priori estimates below apply to every classical homogeneous solution when $n\ge7$. They yield a construction for $n\ge17$ which, together with \cite[Theorem 1.1]{B} for $4\le n\le16$, proves Theorem~\ref{thm:main}. All constants may depend on the fixed dimension.

The weighted pressure estimate used here goes back to Frehse and R\r{u}\v{z}i\v{c}ka \cite{FR95}; see also Li and Yang \cite[Lemma 2.10]{LY} and the homogeneous version in \cite[Lemma 2.6]{B}. Our main observation is that the local integral bounds supplied by this estimate suffice to control an auxiliary tangential vector field $F$ in the Lebesgue space required by the Bernoulli test. We use the subcritical weight $|x-\omega|^{4+1/n-n}$, split the gradient of the Green function into dyadic annuli, and interpolate the contribution from each annulus between $L^1$ and $L^\infty$.

On the unit sphere we write
\[
 u=u^t+u^r e_r,\qquad f=f^t+f^r e_r,\qquad
 H=p+\frac12|u|^2,\qquad e_r=\frac{x}{|x|}.
\]
Here $u^t$ and $f^t$ are tangent to the sphere, and $H$ is the Bernoulli function. The norm $\|f\|_{\Lip(\Sph)}$ is the sum of the supremum norm and the Lipschitz seminorm with respect to geodesic distance. We use the same letters for the fields on $\R^n\setminus\{0\}$ and their restrictions to $\Sph$. Write $H_+=\max\{H,0\}$ and $H_-=\max\{-H,0\}$, and set
\begin{equation*}
 \theta=\frac{(n-2)(n-1)}{2(n-3)}
       =\frac n2+\frac1{n-3}.
\end{equation*}
The energy argument of \cite{B} gives
\begin{equation*}
\begin{aligned}
 &\energyintegral \le C_n\left[\forcenorm^2+
              \bigl(\headnorm+\forcenorm\bigr)^{3/2}\right].
\end{aligned}
\end{equation*}
This energy estimate depends on $\headnorm$ because the energy identity contains the signed term $(n-4)\int_{\Sph}Hu^r\dd\sigma$.

Let $B^{\Sph}_\rho(\omega)$ denote the geodesic ball of radius $\rho$ centered at $\omega$ on $\Sph$. The subcritical localized pressure identity and homogeneity imply, for a fixed $\rho_0>0$,
\begin{equation*}
 \growthsup
 \le C_n\bigl(\headnorm+\|u\|_{L^2(\Sph)}^2+\forcenorm\bigr).
\end{equation*}
By the Cauchy--Schwarz inequality, the integral of $|u|$ over $B^{\Sph}_\rho(\omega)$ is bounded by $C_n\rho^{n-2-\frac{1}{2n}}$ times the square root of the right-hand side. With $s=n-4-\frac{1}{n}$, the kernel estimate in Lemma~\ref{lem:sphere-flux} bounds $F$ in $L^q(\Sph)$ for every fixed $1\le q<2n+1$, including $q=\frac{(n-2)(n-1)}{3n-8}$. Testing the resulting inequality for $H$ with $H_+^{\frac{n-4}{2}}$ then gives
\begin{equation*}
\begin{aligned}
 \headnorm
 &\le C_n\forcenorm\bigl(1+\|u\|_{L^2(\Sph)}\bigr)\\
 &\quad+C_n\forcenorm\left(\growthsup\right)^{1/2}.
\end{aligned}
\end{equation*}
As in \cite[Lemma 2.3]{B}, this power test cancels the tangential transport and radial reaction terms. The estimate for $F$ therefore gives the required $L^\theta$ bound directly, without BMO estimates or an iteration to obtain an $L^\infty$ bound. The construction of $F$ uses the inverse Laplacian on the sphere, normalized to have zero mean; no inverse Laplacian of $H$ or $H_+$ is needed.

Combining these three estimates bounds $\headnorm$ by a constant plus terms involving its powers $1/2$ and $3/4$. Young's inequality absorbs these terms. Substituting the resulting bound into the energy inequality yields the following estimates, proved in Proposition~\ref{prop:apriori}:
\begin{align*}
 \headnorm&\le C_n\bigl(\forcenorm+\forcenorm^4\bigr),
       \\
 \energyintegral&\le C_n\bigl(\forcenorm^2+\forcenorm^8\bigr).
\end{align*}
Since $\theta>n/2$, these bounds allow us to apply the local regularity criterion of Li and Yang \cite[Proposition 2.3]{LY}. We then obtain existence using a compact homogeneous Stokes map and Leray--Schauder degree theory. For sufficiently small force, the energy bound, compactness, and the quadratic structure of the Stokes map yield uniqueness among all classical homogeneous solutions.

The same ideas also yield a simplified proof of the existence theorem for bounded compactly supported forces, established by Li and Yang \cite{LY} for $5\le n\le15$ and by Wang and Yang \cite{WY} for $n\ge16$. We state this result separately and prove it in Section~\ref{sec:whole-space}.

\begin{theorem}[Compactly supported forces]\label{thm:compact-force}
Let $n\ge5$, let $R_0>0$, and let $f\in L^\infty(\R^n;\R^n)$ satisfy $\supp f\subset B_{R_0}(0)$. There exists a regular solution of
\[
 -\Delta u+(u\cdot\nabla)u+\nabla p=f,
 \qquad\diver u=0\quad\text{in }\R^n,
\]
with $u\in W^{2,b}_{\mathrm{loc}}(\R^n)$ and $p\in W^{1,b}_{\mathrm{loc}}(\R^n)$ for every $1<b<\infty$, such that
\[
 |u(x)|\le\frac{C}{(1+|x|)^{n-2}},\qquad
 |\nabla u(x)|+|p(x)|\le\frac{C}{(1+|x|)^{n-1}}.
\]
Here $C$ depends only on $n$, $R_0$, and $\|f\|_{L^\infty(\R^n)}$. The solution is smooth if $f$ is smooth.
\end{theorem}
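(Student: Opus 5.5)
We would follow the Leray--Schauder scheme of Li and Yang \cite{LY}, replacing their Bernoulli step by a finite-exponent estimate for $H_+$. Fix $n\ge5$, $R_0$, and $f$. For $t\in[0,1]$ consider the family
\[
-\Delta u+(u\cdot\nabla)u+\nabla p=tf,\qquad \diver u=0 .
\]
We write it as a fixed point $u=T_t(u)$, where $T_t(u)=\mathcal S\bigl(tf-\diver(u\otimes u)\bigr)$ and $\mathcal S$ is the whole-space Stokes solution operator. We work in a weighted space $X$ with norm $\sup_x(1+|x|)^{n-2}|u(x)|$, intersected with a local Hölder space. The Oseen kernel decays like $|x|^{2-n}$ and its gradient like $|x|^{1-n}$. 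Hence $T_t$ maps bounded sets of $X$ into sets that are bounded and equicontinuous with a uniform decay rate, so $T_t$ is compact on $X$. Degree theory then reduces existence to a uniform a priori bound in $X$ for all fixed points and all $t$. Once that bound holds, the stated decay of $u$, $\nabla u$, and $p$ follows from the representation formulas. The $W^{2,b}_{\mathrm{loc}}$ and $W^{1,b}_{\mathrm{loc}}$ regularity follows from Calderón--Zygmund bootstrapping, and smoothness for smooth $f$ follows in the standard way.

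\textbf{A priori bounds.} We carry out the following steps in order.

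First, the energy identity $\int|\nabla u|^2=t\int f\cdot u$ gives $\|\nabla u\|_{L^2}\le C$. Since $\supp f\subset B_{R_0}$, this also bounds $\|u\|_{L^{2n/(n-2)}}$.

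Second, we apply the Euclidean localized pressure identity with the subcritical weight $|x-x_0|^{4+1/n-n}$, as in \cite[Lemma 2.10]{LY}. This should give a Morrey-type bound
\[
\sup_{x_0,\ \rho\le1}\rho^{3+1/n-n}\int_{B_\rho(x_0)}|u|^2\le C\bigl(1+\|H_+\|_{L^\theta(B)}\bigr)
\]
on a fixed large ball $B\supset B_{R_0}$, where $H=p+\frac12|u|^2$. Outside $B$ the forcing vanishes. There $H$ satisfies the unforced drift equation $-\Delta H+u\cdot\nabla H=-|\omega|^2$ up to quadratic terms, and it decays, so the maximum principle controls $H_+$ by its values near $\partial B$.

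Third, we rewrite the Bernoulli equation as a scalar equation with divergence-free drift,
\[
-\Delta H+\diver(uH)=-|\nabla\times u|^2+f\cdot u ,
\]
and move the force term into divergence form through an auxiliary field $F=\nabla\Delta^{-1}(f\cdot\ \cdot)$, obtained by convolving $\nabla\Gamma$ against $f\otimes u$. The Euclidean dyadic kernel estimate controls $F$. We split $\nabla\Gamma$ into annuli $2^{-k-1}\le|x-y|\le2^{-k}$, bound each piece in $L^1$ and in $L^\infty$ using the Morrey bound from the second step, and interpolate. This should give $\|F\|_{L^q(B)}\le C\|f\|_\infty(1+\|H_+\|^{1/2})$ for all $q$ below an explicit exponent exceeding what is needed.

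Fourth, we invoke the general finite-exponent estimate for scalar equations with divergence-free drift. We test with $H_+^{\gamma}\eta$, and the drift term contributes only a cutoff error. This bounds $\|H_+\|_{L^\theta}$ by $\|F\|_{L^q}$ plus lower-order terms.

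Combining the four steps gives $\|H_+\|\le C+C\|H_+\|^{1/2}$, and Young's inequality absorbs the last term.

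Fifth, with $\theta>n/2$ and the Morrey bound in hand, the Li--Yang local regularity criterion \cite[Proposition 2.3]{LY} gives local $L^\infty$ bounds on $u$. The integral representation then upgrades these to the weighted bound in $X$.

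\textbf{Main obstacle.} We expect the third and fourth steps to be the hardest: choosing exponents so that the $F$-bound closes. The kernel interpolation needs the Morrey decay rate $n-2-\frac1{2n}$ for $\int_{B_\rho}|u|$. We would first try $q$ just above the dual exponent demanded by the $H_+^\gamma$ test and verify the dyadic series converges. A secondary difficulty is that the pressure identity in the second step needs an exact decay normalization of $p$ at infinity; we would handle this with the representation $p=R_iR_j(u_iu_j)+\Delta^{-1}\diver f$.
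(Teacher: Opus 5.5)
Your outline has the same architecture as the paper: energy bound, the subcritical weighted pressure identity with $s=n-4-\frac1n$, an auxiliary field built from $\nabla\Gamma$ and estimated by dyadic $L^1$/$L^\infty$ interpolation, a power test using $\diver u=0$, Young absorption, then Li--Yang's local criterion and degree theory. However, two steps fail as you have written them.

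\emph{Exponents.} You have carried the numbers for the $(n-1)$-dimensional sphere over to $\R^n$. In $\R^n$ the weighted identity gives $\rho^{-(s+2)}\int_{B_\rho(y)}|u|^2\dd x\le\int|u|^2|x-y|^{-s-2}\dd x$. So the natural normalization is $\rho^{2+1/n-n}$, not $\rho^{3+1/n-n}$, and hence $\int_{B_\rho}|u|\le C\rho^{n-1-\frac1{2n}}$, not $C\rho^{n-2-\frac1{2n}}$. This difference is decisive. If $\int_{B_\rho}|g|\le C\rho^{a}$, the shells obey $\|T_j\|_{L^1}\lesssim r_j$ and $\|T_j\|_{L^\infty}\lesssim r_j^{a+1-n}$, so the dyadic series converges only for $q<\frac{n-a}{n-1-a}$. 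Your rate $a=n-2-\frac1{2n}$ gives $q<\frac{4n+1}{2n+1}<2$. The rate implied by your displayed Morrey bound, $a=n-\frac32-\frac1{2n}$, gives $q<\frac{3n+1}{n+1}<3$. The power test, however, needs $\frac1q=\frac1\theta+\frac1n$ with $\theta\ge\frac{2n}{n-2}$ and $\theta>\frac n2$, that is, $q\ge2$ and $q>\frac n3$. The closure therefore fails for every $n$ with your stated rate, and for $n\ge9$ even with your displayed bound. The correct rate $a=n-1-\frac1{2n}$ gives $q<2n+1$, as in Lemma~\ref{lem:euclidean-kernel}.

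\emph{Localization.} The power test must be carried out on all of $\R^n$. With a cutoff $\eta$ at a fixed scale, the error $\int H_+^\beta|\nabla\eta|^2$ has the same homogeneity as the left side $\|H_+\|_{L^\theta}^\beta$, and the error $\int\eta H_+^\beta|u||\nabla\eta|$ is worse. Neither is lower order, so neither can be absorbed. The exterior maximum principle cannot supply them either. It bounds $H_+$ outside $B$ only by $\sup_{\partial B}H_+$, which is a pointwise bound of exactly the kind the finite-exponent method is designed to avoid; at this stage the argument is circular. The paper instead tests with $\chi_R^2H_+^{\beta-1}$, $\beta=\frac{(n-2)\theta}{n}$, and lets $R\to\infty$. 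The cutoff errors then vanish using only the qualitative decay of the given fixed point and $H_+\in L^\theta$. This leaves $\|H_+\|_{L^\theta(\R^n)}\le C\|F\|_{L^q(\R^n)}$ with no remainder (Lemma~\ref{lem:euclidean-scalar}). That in turn requires $F\in L^q(\R^n)$ rather than $L^q(B)$; the far part of $\nabla\Gamma$ is handled by Young's inequality because $q>\frac n{n-1}$. In addition, $F$ must contain $-f$ to absorb the $-\diver f$ term, which is missing from your Bernoulli equation. Finally, uniform $L^\infty$ bounds on $u$ and $\nabla u$ together with the representation formula do not by themselves give a uniform weighted decay bound in your space $X$. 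That step needs the uniform smallness at infinity supplied by Li--Yang's exterior energy estimate and vorticity criterion, which the paper imports from their Section 3.
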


The simplification lies in the Bernoulli estimate. For the whole-space argument, $\theta$ is now a variable Lebesgue exponent. A single power test of the scalar inequality $-\Delta H+u\cdot\nabla H\le\diver F$ on $\R^n$ gives
\[
 \|H_+\|_{L^\theta(\R^n)}
 \le C_{n,\theta}\|F\|_{L^q(\R^n)},\qquad
 \frac1q=\frac1\theta+\frac1n,\qquad
 \frac{2n}{n-2}\le\theta<\infty,
\]
under the integrability and decay assumptions of Lemma~\ref{lem:euclidean-scalar}. Here $F$ denotes the auxiliary vector field, and the constant may depend on $\theta$. The kernel estimate in Lemma~\ref{lem:euclidean-kernel} controls its $L^q$ norm. Taking $\theta=n$ gives
\[
 \|H_+\|_{L^n(\R^n)}
 \le C_n\|F\|_{L^{\frac n2}(\R^n)}
 \le C_f\bigl(1+\|H_+\|_{L^n(\R^n)}^{\frac12}\bigr).
\]
This closes the Bernoulli estimate using a subcritical weighted potential of $|u|^2$. The argument avoids BMO estimates, the scalar $L^\infty$ estimate  in \cite[Lemma 2.2]{WY}, and the subsequent estimate of $(-\Delta)^{-1}H_+$ in \cite[Proposition 3.4]{WY}. In the construction of $F$, the inverse Laplacian acts on $|f||u|$. The local regularity criterion, exterior decay estimates, and construction by Leray--Schauder degree theory follow Li and Yang \cite{LY}.

\noindent\emph{Use of artificial intelligence.}
The author used ChatGPT during the preparation of this paper. The author has checked all mathematical arguments and details by hand and takes full responsibility for the content and accuracy of the paper.

The paper is organized as follows. Section~\ref{sec:apriori} proves the a priori estimates for homogeneous solutions. Section~\ref{sec:existence} proves Theorem~\ref{thm:main}, including regularity, existence by Leray--Schauder degree theory, and small-force uniqueness. Section~\ref{sec:whole-space} develops the analogous whole-space estimates and gives the simplified proof of Theorem~\ref{thm:compact-force} for compactly supported forces.

\section{A priori estimates for homogeneous solutions}\label{sec:apriori}

Throughout this section, we assume $n\ge7$ and let $(u,p)$ be an individually classical homogeneous solution of
\begin{equation}\label{eq:apriori-equation}
 -\Delta u+(u\cdot\nabla)u+\nabla p=f,
 \qquad\diver u=0.
\end{equation}
Unless stated otherwise, the constants depend only on $n$ and are independent of the particular solution. The force $f$ is locally Lipschitz and homogeneous of degree $-3$.

\begin{lemma}[Normalization of the pressure]\label{lem:pressure-normalization}
If $u$ and $f$ have degrees $-1$ and $-3$, respectively, the pressure has a unique representative homogeneous of degree $-2$.
\end{lemma}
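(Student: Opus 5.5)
The plan is to show that $\nabla p$ is homogeneous of degree $-3$, that this forces $p$ to be homogeneous of degree $-2$ up to an additive constant, and that at most one constant can work.

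\emph{Homogeneity of $\nabla p$.} Since $(u,p)$ is classical on $\R^n\setminus\{0\}$, the equation gives
\[
 \nabla p=f+\Delta u-(u\cdot\nabla)u .
\]
Each term on the right is homogeneous of degree $-3$: $\Delta u$ because $u$ has degree $-1$ and each derivative lowers the degree by one, $(u\cdot\nabla)u$ because it has degree $(-1)+(-2)$, and $f$ by assumption. Hence $G:=\nabla p$ satisfies $G(\lambda x)=\lambda^{-3}G(x)$.

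\emph{Euler's relation.} Fix $x\ne0$ and set $\phi(\lambda)=\lambda^2p(\lambda x)$ for $\lambda>0$. Then
\[
 \phi'(\lambda)=2\lambda p(\lambda x)+\lambda^2\,x\cdot G(\lambda x)
 =\lambda\bigl(2p(\lambda x)+\lambda x\cdot G(\lambda x)\bigr).
\]
Define $q(x):=-\tfrac12\,x\cdot\nabla p(x)$. This function is homogeneous of degree $-2$, and the display above reads $\phi'(\lambda)=2\lambda\bigl(p(\lambda x)-q(\lambda x)\bigr)$.

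\emph{Reduction to a constant.} Since $q$ has degree $-2$, the derivative $\nabla q$ has degree $-3$. Let $c:=p-q$; then $\nabla c=G-\nabla q$ is homogeneous of degree $-3$. The step I expect to carry the argument is computing $\nabla q$ directly:
\[
 \partial_i q=-\tfrac12\Bigl(\partial_ip+\sum_j x_j\partial_j\partial_ip\Bigr)=-\tfrac12\bigl(G_i+(-3)G_i\bigr)=G_i .
\]
The middle equality uses Euler's identity $x\cdot\nabla G_i=-3G_i$ for the degree $-3$ function $G_i$. This needs $G\in C^1$, which holds if $p\in C^2$. Otherwise one can argue distributionally, or mollify in the radial variable, or replace the pointwise computation by the integral form: for $x\ne0$,
\[
 p(x)-\lim_{\lambda\to\infty}p(\lambda x)=-\int_1^\infty x\cdot G(\lambda x)\,d\lambda=-\int_1^\infty\lambda^{-3}\,x\cdot G(x)\,d\lambda=-\tfrac12\,x\cdot G(x).
\]
The integral form avoids second derivatives entirely. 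It shows that $\lim_{\lambda\to\infty}p(\lambda x)$ exists, and this limit is independent of $x$. Indeed, $\R^n\setminus\{0\}$ is connected for $n\ge2$ and $\nabla c=0$, so $c=p-q$ is locally constant and hence a single constant $c_0$. Therefore $p-c_0=q$ is homogeneous of degree $-2$.

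\emph{Uniqueness.} If $p-c_1$ and $p-c_2$ are both homogeneous of degree $-2$, then the constant $c_2-c_1$ is homogeneous of degree $-2$. Evaluating at $\lambda x$ gives $c_2-c_1=\lambda^{-2}(c_2-c_1)$ for every $\lambda>0$, so $c_1=c_2$. The pressure is determined by the equation only up to an additive constant, so it has exactly one representative homogeneous of degree $-2$, which proves Lemma~\ref{lem:pressure-normalization}.
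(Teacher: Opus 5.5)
\textbf{Gap in the constancy step when $p$ is only $C^1$.} Your plan matches the paper's: show $\nabla p$ is $(-3)$-homogeneous, show $2p+x\cdot\nabla p$ is constant, subtract the constant, and get uniqueness by scaling. The uniqueness step is fine. The gap is in the constancy step.

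Your identity $\nabla q=\nabla p$ uses $\partial_i(x_j\partial_jp)=\partial_ip+x_j\partial_j\partial_ip$, which needs second derivatives of $p$ to exist and be symmetric. In this paper $f$ is only locally Lipschitz. So for a classical solution, $\nabla p=f+\Delta u-(u\cdot\nabla)u$ is merely continuous. Theorem~\ref{thm:main} only gives $p\in C^{1,\alpha}$, and Lemma~\ref{lem:local-pressure} explicitly avoids second derivatives of $p$. You therefore cannot assume $p\in C^2$.

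Your integral-form fallback does not fix this. It correctly shows that $L(x)=\lim_{\lambda\to\infty}p(\lambda x)$ exists and that $p-q=L$. But that only tells you $L$ is homogeneous of degree $0$. To conclude that $L$ is constant, you appeal to $\nabla c=0$, which is exactly the second-derivative computation the fallback was meant to avoid. When $p$ is only $C^1$, the argument is circular. The distributional route you mention would work, since the Euler relation $x\cdot\nabla G=-3G$ holds in $\mathcal D'(\R^n\setminus\{0\})$ for continuous homogeneous $G$. However, you do not carry it out.

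\textbf{How to close it with first derivatives only.} The paper differentiates in the scaling parameter rather than differentiating $x\cdot\nabla p$ in $x$. For fixed $r>0$, $\nabla_x\bigl(r^2p(rx)-p(x)\bigr)=r^3\nabla p(rx)-\nabla p(x)=0$. Hence $r^2p(rx)-p(x)$ depends only on $r$. Differentiating $r^2p(rx)-p(x)=r^2p(rx_0)-p(x_0)$ at $r=1$ then gives $2p+x\cdot\nabla p\equiv c$, using only $p\in C^1$.

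You can also repair your own integral argument directly. For $x,y\in\Sph$, join $\lambda x$ to $\lambda y$ by an arc of length at most $\pi\lambda$ on $\{|z|=\lambda\}$. On that sphere $|\nabla p|\le\lambda^{-3}\sup_{\Sph}|\nabla p|$. This gives $|p(\lambda x)-p(\lambda y)|\le\pi\lambda^{-2}\sup_{\Sph}|\nabla p|\to0$. Hence $L(x)=L(y)$, and $p-q$ is a single constant.
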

\begin{proof}The momentum equation \eqref{eq:apriori-equation} implies
\begin{align*}
 \nabla p(rx)
 &=f(rx)+\Delta u(rx)-\bigl((u\cdot\nabla)u\bigr)(rx) \\
 & = r^{-3} f(x) + r^{-2} \Delta (u(rx)) - r^{-1} (u(rx) \cdot \nabla)(u(rx) ) \\
 & = r^{-3}(f(x) + \Delta u(x) - ((u\cdot \nabla) u) (x) ) = r^{-3} \nabla p(x)
\end{align*}
Thus the pressure gradient is homogeneous of degree $-3$.

To recover the pressure itself, fix $r>0$ and apply the chain rule in $x$:
\[
 \nabla_x\bigl(r^2p(rx)-p(x)\bigr)
       =r^3\nabla p(rx)-\nabla p(x)=0.
\]
The function in parentheses is therefore constant in $x$, because $\R^n\setminus\{0\}$ is connected. Fix a point $x_0\ne0$. For every $x\ne0$ and $r>0$,
\[
 r^2p(rx)-p(x)=r^2p(rx_0)-p(x_0).
\]
Differentiate this identity with respect to $r$ at $r=1$. The result is
\[
 2p(x)+x\cdot\nabla p(x)
       =2p(x_0)+x_0\cdot\nabla p(x_0)=:c.
\]
Here $c$ is a single constant on $\mathbb{R}^n \setminus \{0\}$. We can normalize $p$ by adding $-c/2$ to $p$.

Define
\[
 \widetilde p(x)=p(x)-\frac c2.
\]
Subtracting this physical constant leaves $\nabla p$ and the momentum equation unchanged. Moreover,
\[
 x\cdot\nabla\widetilde p(x)+2\widetilde p(x)
       =x\cdot\nabla p(x)+2p(x)-c=0.
\]
We now integrate this identity along a ray. For $\omega\in\Sph$ and $r>0$,
\begin{align*}
 \frac{\dd}{\dd r}\bigl(r^2\widetilde p(r\omega)\bigr)
 &=2r\widetilde p(r\omega)
       +r^2\omega\cdot\nabla\widetilde p(r\omega)\\
 &=r\bigl(2\widetilde p(r\omega)
       +(r\omega)\cdot\nabla\widetilde p(r\omega)\bigr)=0.
\end{align*}
Hence $r^2\widetilde p(r\omega)=\widetilde p(\omega)$, or
\[
 \widetilde p(r\omega)=r^{-2}\widetilde p(\omega).
\]
This is precisely degree $-2$ homogeneity. The normalization can also be written explicitly as
\[
 \widetilde p(x)=p(x)-\frac c2
       =-\frac12x\cdot\nabla p(x).
\]

Finally, any two pressures for the same velocity and force have the same gradient by the momentum equation. Their difference is therefore a constant, say $c_0$, on $\mathbb{R}^n\setminus\{0\}$. If both pressures are homogeneous of degree $-2$, then their difference obeys $c_0=r^{-2}c_0$ for every $r>0$. Taking $r=2$ gives $c_0=0$, proving uniqueness of the homogeneous representative.
\end{proof}

We use this representative from now on.  With the notation of Section~\ref{sec:introduction},
\[
 u(r\omega)=r^{-1}u(\omega),\qquad
 p(r\omega)=r^{-2}p(\omega),\qquad
 H(r\omega)=r^{-2}H(\omega).
\]

\begin{lemma}[Equations and Bernoulli identity]\label{lem:angular}
On $\Sph$ the solution satisfies
\begin{equation}\label{eq:divergence}
 \diver_{\Sph}u^t=-(n-2)u^r,
\end{equation}
\begin{equation}\label{eq:angular-momentum}
\begin{aligned}
 -\Delta_{\Sph}u+(n-3)u
 &+(u^t\cdot\nabla_{\Sph})u-u^r u+ \nabla_{\Sph}p-2pe_r=f,
\end{aligned}
\end{equation}
and in particular, the radial component of \eqref{eq:angular-momentum} gives
\begin{equation}\label{eq:radial}
 -\Delta_{\Sph}u^r+u^t\cdot\nabla_{\Sph}u^r=2H+f^r.
\end{equation}
Its Bernoulli function satisfies, distributionally,
\begin{equation}\label{eq:bernoulli}
\begin{aligned}
 -\Delta_{\Sph}H+u^t\cdot\nabla_{\Sph}H
 &+\bigl(2(n-4)-2u^r\bigr)H\\
 &=-\frac12\sum_{i,j=1}^n(\partial_i u_j-\partial_j u_i)^2 +\bigl(f\cdot u-\diver_{\Sph}f^t-(n-4)f^r\bigr).
\end{aligned}
\end{equation}
The Euclidean derivatives in the last equation are evaluated at radius one.
\end{lemma}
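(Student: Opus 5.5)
The plan is to derive everything from the Euclidean system \eqref{eq:apriori-equation} in polar coordinates $x=r\omega$, using the homogeneity degrees fixed by Lemma~\ref{lem:pressure-normalization} and evaluating at $r=1$. For a function $g$ homogeneous of degree $\alpha$ we have $\partial_r g=\alpha g$ and $\partial_r^2 g=\alpha(\alpha-1)g$ at $r=1$. We also use
\[
 \Delta=\partial_r^2+\tfrac{n-1}{r}\partial_r+r^{-2}\Delta_{\Sph},
\]
applied to Cartesian components, and $\nabla=e_r\partial_r+r^{-1}\nabla_{\Sph}$.

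\emph{Divergence and momentum.}
\begin{itemize}
\item Divergence: with $u=u^t+u^re_r$, the divergence on $\Sph$ of the ambient field is $\diver_{\Sph}u^t+(n-1)u^r$ (mean curvature term). The radial derivative contributes $\partial_ru^r=-u^r$. Summing gives $\diver u=\diver_{\Sph}u^t+(n-2)u^r=0$, which is \eqref{eq:divergence}.
\item Laplacian: for $\alpha=-1$, $\Delta u=2u-(n-1)u+\Delta_{\Sph}u=\Delta_{\Sph}u-(n-3)u$.
\item Convection: $(u\cdot\nabla)u=u^r\partial_ru+(u^t\cdot\nabla_{\Sph})u=-u^ru+(u^t\cdot\nabla_{\Sph})u$.
\item Pressure: for $\alpha=-2$, $\nabla p=-2pe_r+\nabla_{\Sph}p$.
\end{itemize}
Substituting these four expressions into the momentum equation gives \eqref{eq:angular-momentum} componentwise.

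\emph{Radial equation.} Dot \eqref{eq:angular-momentum} with $e_r(\omega)=\omega$, using
\[
 \Delta_{\Sph}(u\cdot\omega)=\omega\cdot\Delta_{\Sph}u+2\sum_i\nabla_{\Sph}u_i\cdot\nabla_{\Sph}\omega_i+u\cdot\Delta_{\Sph}\omega .
\]
Here $\Delta_{\Sph}\omega=-(n-1)\omega$. The middle sum equals the ambient tangential divergence $\diver_{\Sph}u^t+(n-1)u^r$, which is $u^r$ by \eqref{eq:divergence}. Hence
\[
 \omega\cdot\bigl(-\Delta_{\Sph}u+(n-3)u\bigr)=-\Delta_{\Sph}u^r .
\]
For the convection term,
\[
 \omega\cdot(u^t\cdot\nabla_{\Sph})u=u^t\cdot\nabla_{\Sph}u^r-u\cdot(u^t\cdot\nabla_{\Sph})\omega=u^t\cdot\nabla_{\Sph}u^r-|u^t|^2,
\]
and $-u^r\,\omega\cdot u=-(u^r)^2$. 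Collecting terms gives $-\Delta_{\Sph}u^r+u^t\cdot\nabla_{\Sph}u^r-|u|^2-2p=f^r$, which is \eqref{eq:radial}.

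\emph{Bernoulli equation.} First establish the classical Euclidean identity
\[
 -\Delta H+u\cdot\nabla H=-\tfrac12\sum_{i,j}(\partial_iu_j-\partial_ju_i)^2+f\cdot u-\diver f .
\]
One derivation writes $(u\cdot\nabla)u=\nabla\tfrac12|u|^2+Wu$ with $W_{ij}=\partial_ju_i-\partial_iu_j$, so the momentum equation becomes $-\Delta u+Wu+\nabla H=f$. Take the divergence, dot with $u$, and combine using $\diver u=0$ and $\Delta\tfrac12|u|^2=u\cdot\Delta u+|\nabla u|^2$. Then convert to the sphere with degree $-2$ homogeneity, evaluated at $r=1$:
\begin{itemize}
\item $\Delta H=\Delta_{\Sph}H+\bigl(6-2(n-1)\bigr)H=\Delta_{\Sph}H-2(n-4)H$;
\item $u\cdot\nabla H=u^t\cdot\nabla_{\Sph}H-2u^rH$;
\item $\diver f=-3f^r+(n-1)f^r+\diver_{\Sph}f^t=\diver_{\Sph}f^t+(n-4)f^r$.
\end{itemize}
This is exactly \eqref{eq:bernoulli}.

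The step I expect to be the main obstacle is justifying the Bernoulli identity distributionally. The solution is only classical: $u\in C^2$, $p\in C^1$, and $f$ is merely Lipschitz. So $\Delta H$ and $\diver f$ exist only weakly, and one cannot differentiate the momentum equation pointwise. I would handle this by testing.
\begin{itemize}
\item Pair the momentum equation $-\Delta u+Wu+\nabla H=f$ with $\nabla\varphi$ for $\varphi\in C_c^\infty(\R^n\setminus\{0\})$. Integrate by parts only on terms involving $u$ (the term $\Delta u\cdot\nabla\varphi$ vanishes after integration by parts, by $\diver u=0$). This yields the weak form of $-\Delta H=\diver(Wu)-\diver f$.
\item Separately, pair with $u\varphi$, which is admissible since $u\in C^2$.
\item The divergence $\diver(Wu)$, the term $u\cdot\nabla H$, and $\diver f$ are then handled weakly.
\end{itemize}
Finally, pass from the Euclidean weak identity to the spherical one. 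Take test functions of product form $\eta(r)\psi(\omega)$, use homogeneity to separate the radial integral, and choose $\eta$ with $\int\eta\,r^{n-1-4}\dd r\ne0$. This leaves the weak form of \eqref{eq:bernoulli} on $\Sph$.
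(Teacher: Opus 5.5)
Your proposal is correct and follows essentially the same route as the paper. You use the polar formulas at $r=1$ for the divergence, Laplacian, convection and pressure terms, take the radial component via the product rule with $\Delta_{\Sph}\omega=-(n-1)\omega$ and the tangential divergence of $u$, and convert the Euclidean Bernoulli identity to the sphere by degree $-2$ homogeneity. The differences are minor: you obtain the Euclidean identity from the Lamb form $(u\cdot\nabla)u=\nabla\tfrac12|u|^2+Wu$ instead of adding the pressure Poisson equation to the equation dotted with $u$, and you spell out the distributional justification with product test functions $\eta(r)\psi(\omega)$ normalized by $\int\eta(r)\,r^{n-5}\dd r\ne0$, which the paper leaves implicit.
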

\begin{proof}
For a scalar function $\phi$ on $\Sph$ and a real exponent $\kappa$, polar differentiation gives
\begin{align*}
 \Delta\bigl(r^\kappa\phi(\omega)\bigr)
 &=r^{\kappa-2}\bigl(\Delta_{\Sph}\phi+\kappa(\kappa+n-2)\phi\bigr),\\
 \nabla\bigl(r^\kappa\phi(\omega)\bigr)
 &=r^{\kappa-1}\bigl(\nabla_{\Sph}\phi+\kappa\phi e_r\bigr).
\end{align*}
The Laplacian formula holds componentwise. The polar divergence formula proves \eqref{eq:divergence}. Also
\[
 (u\cdot\nabla)u
 =r^{-3}\bigl((u^t\cdot\nabla_{\Sph})u-u^r u\bigr),
\]
which gives \eqref{eq:angular-momentum}. Since $\Delta_{\Sph}e_r=-(n-1)e_r$ and $\nabla_{\Sph,e}e_r=e$ for tangent unit vectors $e$, the product rule yields
\begin{align*}
 (\Delta_{\Sph}u)\cdot e_r
 &=\Delta_{\Sph}u^r-2\diver_{\Sph}u^t-(n-1)u^r,\\
 \bigl((u^t\cdot\nabla_{\Sph})u\bigr)\cdot e_r
 &=u^t\cdot\nabla_{\Sph}u^r-|u^t|^2.
\end{align*}
Taking the radial component in \eqref{eq:angular-momentum} and substituting \eqref{eq:divergence} cancels the linear terms in $u^r$ and gives \eqref{eq:radial}. These identities coincide with \cite[(13), (15)]{B}.

Dotting equation \eqref{eq:apriori-equation} with $u$ and taking its divergence give
\begin{gather*}
 -\Delta\left(\frac{|u|^2}{2}\right)+|\nabla u|^2
       +u\cdot\nabla H=f\cdot u,\\
 -\Delta p=\sum_{i,j}\partial_i u_j\partial_j u_i-\diver f.
\end{gather*}
Since
\[
 |\nabla u|^2-\sum_{i,j}\partial_i u_j\partial_j u_i
 =\frac12\sum_{i,j}(\partial_i u_j-\partial_j u_i)^2,
\]
their sum is
\[
 -\Delta H+u\cdot\nabla H
 =-\frac12\sum_{i,j}(\partial_i u_j-\partial_j u_i)^2
       +f\cdot u-\diver f.
\]
Finally, $H$ has degree $-2$ and
\[
 \diver f=r^{-4}\bigl(\diver_{\Sph}f^t+(n-4)f^r\bigr).
\]
The polar Laplacian and transport formulas now give \eqref{eq:bernoulli}.
\end{proof}

The energy and radial-mean identities below are those of \cite[Lemmas 2.2 and 2.4]{B}. We include their derivation and the energy bound used in the closure argument.

Since $\partial_r u=-u$ at radius one, the radial and tangential derivatives are orthogonal in the Euclidean gradient norm. Thus
\begin{equation}\label{eq:energy-definition}
\begin{aligned}
 &\energyintegral =\int_{\Sph}\bigl(|\nabla u|^2+(n-4)|u|^2\bigr)\dd\sigma.
\end{aligned}
\end{equation}
This integral controls the full squared $H^1(\Sph)$ norm of $u$ because $n-3>0$.

Recall that
\[
 \theta=\frac{(n-2)(n-1)}{2(n-3)}
       =\frac{n}{2}+\frac{1}{n-3}>\frac{n}{2}.
\]

\begin{lemma}[Energy and pressure estimates]\label{lem:energy}
Every classical solution of \eqref{eq:apriori-equation} satisfies
\begin{equation}\label{eq:pressure-one}
 \|p\|_{L^1(\Sph)}
 \le C_n\bigl(\headnorm+\|u\|_{L^2(\Sph)}^2+\forcenorm\bigr).
\end{equation}
It also satisfies the energy estimate
\begin{equation}\label{eq:energy-pre}
\begin{aligned}
 &\energyintegral\\
 &\qquad\le C_n\left[
   \forcenorm^2+\bigl(\headnorm+\forcenorm\bigr)^{3/2}\right].
\end{aligned}
\end{equation}
\end{lemma}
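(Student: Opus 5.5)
The pressure bound \eqref{eq:pressure-one} will come from the radial-mean identity; the energy bound \eqref{eq:energy-pre} will come from testing \eqref{eq:angular-momentum} with $u$, which leaves the signed Bernoulli term $(n-4)\int_{\Sph}Hu^r\dd\sigma$. That term is the hard part, and I can only partly control it.

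\emph{Radial mean and the pressure bound.} Integrate \eqref{eq:radial} over $\Sph$. The Laplacian term vanishes. Integrating by parts and using \eqref{eq:divergence},
\[
\int_{\Sph}u^t\cdot\nabla_{\Sph}u^r\dd\sigma=-\int_{\Sph}u^r\diver_{\Sph}u^t\dd\sigma=(n-2)\int_{\Sph}(u^r)^2\dd\sigma .
\]
Hence
\[
(n-2)\int_{\Sph}(u^r)^2\dd\sigma=2\int_{\Sph}H\dd\sigma+\int_{\Sph}f^r\dd\sigma .
\]
This has two consequences:
\begin{itemize}
\item $\int_{\Sph}H\dd\sigma\ge-\frac12\int_{\Sph}f^r\dd\sigma$, so $\int_{\Sph}H_-\dd\sigma\le\int_{\Sph}H_+\dd\sigma+C_n\forcenorm$;
\item $\|u^r\|_{L^2(\Sph)}^2\le C_n\bigl(\headnorm+\forcenorm\bigr)=:C_nA$, using H\"older on the compact sphere.
\end{itemize}
Since $p=H-\frac12|u|^2$, we get $\|p\|_{L^1}\le\|H_+\|_{L^1}+\|H_-\|_{L^1}+\frac12\|u\|_{L^2}^2$. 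Together with $\|H_+\|_{L^1}\le C_n\headnorm$, this gives \eqref{eq:pressure-one}.

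\emph{Energy identity.} Dot \eqref{eq:angular-momentum} with $u$ and integrate. For the transport term, \eqref{eq:divergence} gives
\[
\int_{\Sph}(u^t\cdot\nabla_{\Sph})u\cdot u\dd\sigma=\frac{n-2}{2}\int_{\Sph}u^r|u|^2\dd\sigma .
\]
For the pressure term, $\int_{\Sph}\nabla_{\Sph}p\cdot u^t\dd\sigma=(n-2)\int_{\Sph}pu^r\dd\sigma$. Collecting the cubic and pressure terms gives $(n-4)\int_{\Sph}u^r\bigl(\frac12|u|^2+p\bigr)\dd\sigma$, and therefore
\[
\energyintegral+(n-4)\int_{\Sph}Hu^r\dd\sigma=\int_{\Sph}f\cdot u\dd\sigma .
\]
The force term is at most $\forcenorm\|u\|_{L^2}$, which Young's inequality absorbs at the cost of $C\forcenorm^2$. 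It remains to bound $-\int_{\Sph}Hu^r\dd\sigma$ from above by $C_nA^{3/2}$ plus an absorbable amount of energy.

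\emph{The positive part.} Since $\theta\ge2$ for $n\ge7$, the $L^2$ bound for $u^r$ from the first step gives
\[
\int_{\Sph}H_+|u^r|\dd\sigma\le\|H_+\|_{L^2}\|u^r\|_{L^2}\le C_nA\cdot A^{1/2}=C_nA^{3/2}.
\]
This is exactly the claimed exponent, which is why I expect this route to be the intended one.

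\emph{Main obstacle: the negative part.} The remaining term is $\int_{\Sph}H_-u^r_+\dd\sigma$. Only the $L^1$ norm of $H_-$ is controlled, by the first step, and no bound on $u^r_+$ is yet in hand, so this contribution is not bounded by the plan above; I see three candidate strategies, none yet carried out.
\begin{itemize}
\item Apply a maximum principle to \eqref{eq:radial}, whose right side satisfies $2H+f^r\le2H_++|f|$, to get $\sup u^r_+\lesssim\|H_+\|_{L^\theta}+\forcenorm$. This works because $\theta>(n-1)/2$, half the dimension of $\Sph$. However, it yields only $A^2$, which does not close.
\item Test \eqref{eq:radial} with $H_-$-related quantities, or use the pointwise relation $H_-\le p_-$.
\item Follow \cite[Lemma 2.2]{B}, which I expect handles this term by an identity obtained from testing \eqref{eq:radial} with $u^r$. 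That identity gives
\[
2\int_{\Sph}Hu^r\dd\sigma=\int_{\Sph}|\nabla_{\Sph}u^r|^2\dd\sigma+\frac{n-2}{2}\int_{\Sph}(u^r)^3\dd\sigma-\int_{\Sph}f^ru^r\dd\sigma .
\]
It converts the negative part into cubic terms in $u^r$. I would then try to control these by interpolating the $L^2$ bound $\|u^r\|_{L^2}^2\le C_nA$ against the energy.
\end{itemize}
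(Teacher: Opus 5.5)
Your radial-mean identity, pressure bound and energy identity are correct and match the paper. Your bound $\int_{\Sph}H_+|u^r|\dd\sigma\le C_nA^{3/2}$ is also the one used there. However, \eqref{eq:energy-pre} is not proved, because the term $\int_{\Sph}H_-(u^r)_+\dd\sigma$ is left open.

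The missing idea is to test \eqref{eq:radial} with $(u^r)_+$ instead of $u^r$. The diffusion term becomes $\int_{\Sph}|\nabla_{\Sph}(u^r)_+|^2\dd\sigma$. By \eqref{eq:divergence}, the transport term becomes $\frac{n-2}{2}\int_{\Sph}(u^r)_+^3\dd\sigma$. Both are nonnegative, so
\[
 \int_{\Sph}H(u^r)_+\dd\sigma\ge-\frac12\int_{\Sph}f^r(u^r)_+\dd\sigma .
\]
So you should not split $H$ against $(u^r)_+$ at all. The whole product, including its $H_-$ part, is bounded below by a force term, and $H_-$ never has to be estimated. Write $u^r=(u^r)_+-(u^r)_-$ in the energy identity and use $H(u^r)_-\le H_+(u^r)_-$. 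This gives
\[
 \energyintegral\le\int_{\Sph}f\cdot u\dd\sigma+(n-4)\int_{\Sph}H_+(u^r)_-\dd\sigma+\frac{n-4}{2}\int_{\Sph}f^r(u^r)_+\dd\sigma .
\]
Your bound $\|u^r\|_{L^2(\Sph)}^2\le C_nA$, the inequality $\forcenorm\le A$, and Young's inequality on the first term then give \eqref{eq:energy-pre}.

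Your third candidate comes closest, but as formulated it would fail. Testing with $u^r$ itself leaves the signed term $\frac{(n-4)(n-2)}{4}\int_{\Sph}(u^r)^3\dd\sigma$ in the energy inequality. You would then have to bound $\int_{\Sph}(u^r)_-^3\dd\sigma$. Interpolating the $L^2$ bound against the quadratic energy cannot do this when $n\ge7$. On the $(n-1)$-dimensional sphere the Sobolev exponent $\frac{2(n-1)}{n-3}$ is at most $3$. So the cubic term is at best comparable to the cube of the $H^1$ norm and cannot be absorbed. What rescues the argument is the favorable sign of the cubic term for the positive part, not interpolation. Equivalently, you could combine your $u^r$ identity with the analogous test against $(u^r)_-$, whose gradient term cancels part of $-\frac{n-4}{2}\int_{\Sph}|\nabla_{\Sph}u^r|^2\dd\sigma$. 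The maximum-principle route gives only $A^2$, as you observed. The relation $H_-\le p_-$ adds nothing, since only $\int_{\Sph}H_-\dd\sigma$ is controlled.
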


\begin{proof}
Integrating \eqref{eq:radial} eliminates its Laplacian. Integration by parts and \eqref{eq:divergence} give
\[
 \int_{\Sph}u^t\cdot\nabla_{\Sph}u^r\dd\sigma
 =-\int_{\Sph}(\diver_{\Sph}u^t)u^r\dd\sigma
 =(n-2)\int_{\Sph}(u^r)^2\dd\sigma.
\]
Writing $H=H_+-H_-$ gives the radial-mean identity
\begin{equation}\label{eq:radial-mean}
\begin{aligned}
 (n-2)\int_{\Sph}(u^r)^2\dd\sigma
       +2\int_{\Sph}H_-\dd\sigma
 &=2\int_{\Sph}H_+\dd\sigma +\int_{\Sph}f^r\dd\sigma.
\end{aligned}
\end{equation}
Using $p=H-|u|^2/2$ yields
\[
 \|p\|_{L^1(\Sph)}
 \le 2\int_{\Sph}H_+\dd\sigma
      +\frac12\int_{\Sph}|f^r|\dd\sigma
      +\frac12\|u\|_{L^2(\Sph)}^2.
\]
H\"older's inequality on the finite sphere proves \eqref{eq:pressure-one}. 

Next take the inner product of \eqref{eq:angular-momentum} with $u$ and integrate. The Laplacian and linear velocity term give the integral in \eqref{eq:energy-definition}. The transport terms give
\begin{align*}
 \int_{\Sph}\bigl((u^t\cdot\nabla_{\Sph})u-u^r u\bigr)
                         \cdot u\dd\sigma
 & =-\frac12\int_{\Sph}(\diver_{\Sph}u^t)|u|^2\dd\sigma
                    -\int_{\Sph}u^r|u|^2\dd\sigma\\
 & =\frac{n-4}{2}\int_{\Sph}u^r|u|^2\dd\sigma.
\end{align*}
For the pressure terms, tangential integration by parts gives
\begin{align*}
 \int_{\Sph}(\nabla_{\Sph}p-2p e_r)\cdot u\dd\sigma
 &=-\int_{\Sph}p\diver_{\Sph}u^t\dd\sigma
                    -2\int_{\Sph}pu^r\dd\sigma\\
 &=(n-4)\int_{\Sph}pu^r\dd\sigma.
\end{align*}
Adding these two expressions gives the exact energy identity
\begin{equation}\label{eq:exact-energy}
\begin{aligned}
 &\energyintegral+(n-4)\int_{\Sph}Hu^r\dd\sigma =\int_{\Sph}f\cdot u\dd\sigma.
\end{aligned}
\end{equation}

To estimate the potentially signed term $Hu^r$, test \eqref{eq:radial} with $(u^r)_+=\max(u^r,0)$. The diffusion term is the squared gradient of this positive part. The transport term equals
\[
 -\frac12\int_{\Sph}(\diver_{\Sph}u^t)(u^r)_+^2\dd\sigma
 =\frac{n-2}{2}\int_{\Sph}(u^r)_+^3\dd\sigma.
\]
Hence
\begin{align*}
 &\int_{\Sph}|\nabla_{\Sph}(u^r)_+|^2\dd\sigma
       +\frac{n-2}{2}\int_{\Sph}(u^r)_+^3\dd\sigma =2\int_{\Sph}H(u^r)_+\dd\sigma
           +\int_{\Sph}f^r(u^r)_+\dd\sigma.
\end{align*}
The Lipschitz positive-part test is justified by smooth approximation in the weak equation. In particular,
\[
 \int_{\Sph}H(u^r)_+\dd\sigma
 \ge-\frac12\int_{\Sph}f^r(u^r)_+\dd\sigma,
 \qquad
 \int_{\Sph}H(u^r)_-\dd\sigma
 \le\int_{\Sph}H_+(u^r)_-\dd\sigma.
\]
Splitting $u^r=(u^r)_+-(u^r)_-$ in the exact energy identity proves
\begin{equation}\label{eq:energy-inequality}
\begin{aligned}
 \energyintegral \le & \int_{\Sph}f\cdot u\dd\sigma
       +(n-4)\int_{\Sph}H_+(u^r)_-\dd\sigma\\
 &\qquad\quad+\frac{n-4}{2}
                         \int_{\Sph}f^r(u^r)_+\dd\sigma.
\end{aligned}
\end{equation}

Because $\theta>2$, its H\"older conjugate is less than $2$. Finite surface measure therefore implies
\begin{align*}
 \int_{\Sph}H_+(u^r)_-\dd\sigma
 &\le C_n\headnorm\|u^r\|_{L^2(\Sph)},\\
 \left|\int_{\Sph}f^r(u^r)_+\dd\sigma\right|
 &\le C_n\forcenorm\|u^r\|_{L^2(\Sph)}.
\end{align*}
The remaining force term satisfies
\[
 \left|\int_{\Sph}f\cdot u\dd\sigma\right|
 \le C_n\forcenorm\left(\energyintegral\right)^{1/2}.
\]
The radial-mean identity \eqref{eq:radial-mean}, after discarding its nonnegative $H_-$ term, gives by H\"older's inequality
\[
 \|u^r\|_{L^2(\Sph)}
 \le C_n\bigl(\headnorm+\forcenorm\bigr)^{1/2}.
\]
Using this radial-velocity bound in \eqref{eq:energy-inequality} gives
\begin{align*}
 &\energyintegral\\
 &\quad\le C_n\forcenorm\left(\energyintegral\right)^{1/2}\\
 &\qquad\quad+C_n\bigl(\headnorm+\forcenorm\bigr)^{3/2}\\
 &\quad\le\frac12\energyintegral+C_n\forcenorm^2\\
 &\qquad\quad+C_n\bigl(\headnorm+\forcenorm\bigr)^{3/2}.
\end{align*}
Absorbing half of the integral proves \eqref{eq:energy-pre}.

\end{proof}

The weighted estimate below is the homogeneous counterpart of Li and Yang \cite[Lemma 2.10]{LY}. In the paragraph preceding that lemma, they attribute this estimate and their Lemma 2.11 to Frehse and R\r{u}\v{z}i\v{c}ka \cite[Corollary and Theorem 4.1, p.~137]{FR95}. The weighted velocity estimate in the homogeneous setting is already contained in the proof of Bang, Gui, Liu, Wang, and Xie \cite[Lemma 2.6]{B}. We include the derivation with the lower-order bounds suited to the present setting. 

\begin{lemma}[A subcritical weighted pressure estimate]\label{lem:local-pressure}
Let $n\ge7$, let $(u,p)$ be a classical homogeneous solution under consideration, and fix $R=1/32$. For every $0<s<n-4$, uniformly in $\omega\in\Sph$,
\begin{equation}\label{eq:local-pressure-bound}
 \int_{B_R(\omega)}
       \frac{|u(x)|^2}{|x-\omega|^{s+2}}\dd x
 \le C_{n,s}\bigl(\headnorm+\|u\|_{L^2(\Sph)}^2+\forcenorm\bigr).
\end{equation}
The constant depends only on $n$ and $s$ and is independent of the particular solution.
\end{lemma}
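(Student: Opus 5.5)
We prove \eqref{eq:local-pressure-bound} by the Frehse--R\r{u}\v{z}i\v{c}ka weighted test, adapted to homogeneity as in \cite[Lemma 2.6]{B}. Fix $\omega\in\Sph$ and a cutoff $\eta\in C_c^\infty(B_{2R}(\omega))$ with $\eta=1$ on $B_R(\omega)$. On $B_{2R}(\omega)$ we have $|x|\in[15/16,17/16]$, so homogeneity makes $u,p,H,f$ comparable to their spherical values. In particular, every integral over $B_{2R}(\omega)\setminus B_R(\omega)$, or any integral containing derivatives of $\eta$, is bounded by $C(\|u\|_{L^2(\Sph)}^2+\|p\|_{L^1(\Sph)}+\ldots)$. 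By \eqref{eq:pressure-one}, these terms are in turn bounded by the right-hand side of \eqref{eq:local-pressure-bound}.

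The key identity comes from testing with the regularized weight $\phi_\epsilon(x)=\eta(x)(|x-\omega|^2+\epsilon^2)^{-s/2}$. Since $-\Delta|y|^{-s}=s(n-2-s)|y|^{-s-2}>0$ for $0<s<n-2$, we test the Bernoulli equation
\[
 -\Delta H+u\cdot\nabla H=-\tfrac12|\mathrm{curl}\,u|^2+f\cdot u-\diver f
\]
against $\phi_\epsilon$. Equivalently, we combine the energy-type identity for $|u|^2/2$ with the pressure equation. This gives
\[
 s(n-2-s)\int \frac{\eta H}{|x-\omega|^{s+2}} + \int\frac{\eta|\mathrm{curl}\,u|^2}{2|x-\omega|^s}
 = \int H\,u\cdot\nabla\phi + \int(f\cdot u)\phi+\int f\cdot\nabla\phi+\text{cutoff terms}.
\]
Here we use $\diver u=0$, so that $\int(u\cdot\nabla H)\phi=-\int Hu\cdot\nabla\phi$. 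The Laplacian is moved onto the weight, and the limit $\epsilon\to0$ is taken by monotone convergence. The issue is the sign of $H$. We write $H=\tfrac12|u|^2+p$ and use instead the identity obtained by testing the momentum equation with $u\phi$. The standard FR manipulation then gives
\[
 c_{n,s}\int\frac{\eta|u|^2}{|x-\omega|^{s+2}}+\int\frac{\eta|\nabla u|^2}{|x-\omega|^s}
 \le \int (\tfrac12|u|^2+p)\,u\cdot\nabla\phi + \int p\,\Delta\phi\cdot(\ldots)+\ldots .
\]
The cubic term $\int |u|^2u\cdot\nabla\phi$ must be handled using the sign structure. We follow \cite[Lemma 2.10]{LY}: write it through $H$, so that it becomes $\int Hu\cdot\nabla\phi$, and bound the part with $H\le H_+$.

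The main obstacle is controlling $\int H_+|u||x-\omega|^{-s-1}\eta$ and the pressure term $\int p|x-\omega|^{-s-2}\eta$. For the former we use H\"older with $H_+\in L^\theta$. Because $\theta>n/2$ and $s<n-4$, we have $|x-\omega|^{-s}\in L^{\theta'}$ locally, with room to spare. The product is therefore bounded by $\delta\int\eta|u|^2|x-\omega|^{-s-2}+C_\delta\|H_+\|_{L^\theta}^2\cdot(\text{finite weight integral})$. Quadratic dependence on $\|H_+\|$ is not what \eqref{eq:local-pressure-bound} asserts, so we must be more careful. Instead we use the positivity coming from $-\Delta|y|^{-s}>0$ on the left. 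This places $H_+$ linearly against the singular weight, giving $\int H_+|x-\omega|^{-s-2}\le C\|H_+\|_{L^\theta}$, since $(s+2)\theta'<n$ exactly when $s<n-2-n/\theta$, and $n/\theta<2$. The $H_-$ part, and hence $p=H-|u|^2/2\le H_+$, then appears with a favorable sign.

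The subcriticality $s<n-4$ is what makes $(s+2)\theta'<n$ and the force terms $\int|f||u||x-\omega|^{-s}$ and $\int|f||x-\omega|^{-s-1}$ integrable. The latter is bounded by $\forcenorm(1+\delta^{-1})$ plus an absorbed $\delta$-fraction of the left side. Collecting terms, absorbing, and using uniformity in $\omega$ by rotation yields \eqref{eq:local-pressure-bound}, with a constant depending on $n,s$ that blows up as $s\uparrow n-4$.
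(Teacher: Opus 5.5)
Your proposal has a genuine gap: it never reaches the identity that drives the estimate, and the route you do take leaves an uncontrolled term. Testing the momentum equation with $u\phi$ gives the local energy identity
\[
 \int|\nabla u|^2\phi+\frac{s(n-2-s)}{2}\int\frac{\eta|u|^2}{|x-\omega|^{s+2}}
 =\int H\,u\cdot\nabla\phi+\int(f\cdot u)\phi+\text{cutoff terms}.
\]
The term $\int H\,u\cdot\nabla\phi$ cannot be handled as you suggest. Since $u\cdot\nabla\phi$ changes sign, you need $|H|$, not $H_+$. But $|H|$ is only bounded by $|p|+\frac12|u|^2$, which leaves $\int(|p|+|u|^2)|u|\,|x-\omega|^{-s-1}$. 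That is a cubic quantity, not controlled by $\headnorm+\|u\|_{L^2(\Sph)}^2+\forcenorm$, and certainly not linearly.

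Your final paragraph invokes ``$-\Delta|y|^{-s}>0$ places $H_+$ linearly against the singular weight.'' In the energy identity, however, the Laplacian of the weight falls on $\frac12|u|^2$, not on $H$. In your Bernoulli-tested identity it does fall on $H$, but that identity contains no weighted $|u|^2$ term at all. Neither identity gives the bound, and the appeal to \cite[Lemma 2.10]{LY} does not describe what that lemma does.

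The missing step is to test the pressure equation $-\Delta p=\partial_i\partial_j(u_iu_j)-\diver f$ alone against $\zeta|x-\omega|^{-s}$. Equivalently, subtract your Bernoulli-tested identity from the energy-tested one. Then $\int H\,u\cdot\nabla\phi$ and $\int(f\cdot u)\phi$ cancel, and $\int(|\nabla u|^2-\frac12|\mathrm{curl}\,u|^2)\phi=\int u_iu_j\partial_{ij}\phi$. Expand $\partial_{ij}|x-\omega|^{-s}$ and $\Delta|x-\omega|^{-s}=-s(n-2-s)|x-\omega|^{-s-2}$, and substitute $p=H-\frac12|u|^2$. This gives
\[
 s(s+2)\int\frac{|u\cdot(x-\omega)|^2\zeta}{|x-\omega|^{s+4}}+\frac{s(n-4-s)}{2}\int\frac{|u|^2\zeta}{|x-\omega|^{s+2}}
 =s(n-2-s)\int\frac{H\zeta}{|x-\omega|^{s+2}}+\mathcal R_\omega ,
\]
with no cubic term.

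Only in this identity does $H$ enter linearly with the favourable sign. So $H\le H_+$ and H\"older's inequality with $(s+2)\theta'<n$ finish the argument. The remainder $\mathcal R_\omega$ involves only $\|p\|_{L^1}$, $\|u\|_{L^2}^2$ and $\|f\|_{L^\infty}$ on the annulus, and these are controlled by \eqref{eq:pressure-one}.

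This also corrects your account of subcriticality. The condition $s<n-4$ is needed precisely to make the net coefficient $s(n-4-s)/2$ of the weighted $|u|^2$ term positive. It is not needed for the force terms, which are integrable for any $s<n-1$, and you do not identify this role.
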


\begin{proof}
Fix $0<s<n-4$ and $\omega\in\Sph$. Choose $\zeta\in C_c^\infty(B_{2R}(\omega))$ such that $0\le\zeta\le1$, $\zeta=1$ on $B_R(\omega)$, and
\[
 |\nabla\zeta|\le C_{n,s}R^{-1},\qquad
 |\nabla^2\zeta|\le C_{n,s}R^{-2}.
\]
Its support lies in the fixed annulus $\{1/2<|x|<3/2\}$. Put
\[
 k(x)=|x-\omega|^{-s}.
\]
Direct differentiation gives
\begin{align*}
 \partial_i k
 &=-s
       \frac{x_i-\omega_i}{|x-\omega|^{s+2}},\\
 \partial_{ij}k
 &=s(s+2)
       \frac{(x_i-\omega_i)(x_j-\omega_j)}{|x-\omega|^{s+4}}
       -s
       \frac{\delta_{ij}}{|x-\omega|^{s+2}},\\
 \Delta k
 &=-s(n-2-s)
                                      |x-\omega|^{-s-2}.
\end{align*}
The distributional pressure equation is
\[
 -\Delta p=\partial_i\partial_j(u_i u_j)-\diver f.
\]
Consequently, every smooth test function $\psi$ compactly supported away from the origin satisfies
\begin{equation}\label{eq:subcritical-pressure-test}
 \int_{\R^n}u_i u_j\partial_{ij}\psi\dd x
 +\int_{\R^n}p\Delta\psi\dd x
 =-\int_{\R^n}f\cdot\nabla\psi\dd x.
\end{equation}
Repeated Cartesian indices are summed from $1$ to $n$.

To use the singular test $\psi=\zeta k$, first replace $k$ by
\[
 k_\tau(x)=\bigl(|x-\omega|^2+\tau^2\bigr)^{-s/2},
 \qquad \tau>0.
\]
The functions $\zeta k_\tau$ are smooth and compactly supported away from the origin. The kernels and their first two derivatives converge away from $\omega$ and are bounded, respectively, by constant multiples of
\[
 |x-\omega|^{-s},\qquad
 |x-\omega|^{-s-1},\qquad
 |x-\omega|^{-s-2}.
\]
Their radial integrands near $\omega$ are $r^{n-1-s}$, $r^{n-2-s}$, and $r^{n-3-s}$, which are integrable because $s<n-4$. Dominated convergence, including the terms obtained by differentiating $\zeta$, gives $\zeta k_\tau\to\zeta k$ in $W^{2,1}$. The individual local boundedness of $u,p,f$ therefore passes \eqref{eq:subcritical-pressure-test} to the limit. No uniform pointwise bound for $u$ or $p$ is assumed, and no second classical derivative of $p$ is required.

Expand the derivatives of $\zeta k$ in \eqref{eq:subcritical-pressure-test} and substitute $p=H-|u|^2/2$. The resulting identity is
\begin{equation}\label{eq:subcritical-pressure-identity}
\begin{aligned}
 &s(s+2)\int_{\R^n}
       \frac{|u\cdot(x-\omega)|^2\zeta}{|x-\omega|^{s+4}}\dd x
       +\frac{s(n-4-s)}2
       \int_{\R^n}\frac{|u|^2\zeta}{|x-\omega|^{s+2}}\dd x\\
 &\qquad=s(n-2-s)
       \int_{\R^n}\frac{H\zeta}{|x-\omega|^{s+2}}\dd x
       +\mathcal R_\omega,
\end{aligned}
\end{equation}
where the remainder is exactly
\begin{align*}
 \mathcal R_\omega={}&-\int_{\R^n}pk\Delta\zeta\dd x
 -2\int_{\R^n}p\nabla k\cdot\nabla\zeta\dd x
 -\int_{\R^n}f\cdot\nabla(\zeta k)\dd x\\
 &-2\int_{\R^n}u_i u_j(\partial_i k)(\partial_j\zeta)\dd x
 -\int_{\R^n}u_i u_j k\partial_{ij}\zeta\dd x.
\end{align*}
The coefficient of the isotropic velocity term is positive because
\[
 s
       \left[-1+\frac12(n-2-s)\right]
       =\frac{s(n-4-s)}2>0.
\]

All differentiated-cutoff terms are supported where $R\le|x-\omega|\le2R$, and their kernels are bounded in terms of $n$, $s$, and the fixed radius. The force term that differentiates $k$ is also bounded, since
\[
 \int_{B_{2R}(\omega)}|\nabla k|\dd x
 \le C_{n,s}\int_0^{2R}r^{n-2-s}\dd r<\infty.
\]
It follows that
\begin{align*}
 |\mathcal R_\omega|
 &\le C_{n,s}\bigl(
    \|p\|_{L^1(B_{2R}(\omega))}
       +\|u\|_{L^2(B_{2R}(\omega))}^2
       +\|f\|_{L^\infty(B_{2R}(\omega))}\bigr).
\end{align*}
Homogeneity converts the annular norms to spherical norms. The radial integrals are finite dimensional constants, and the force supremum on this annulus is at most $8\forcenorm$. Applying \eqref{eq:pressure-one} therefore yields
\begin{equation}\label{eq:subcritical-remainder}
 |\mathcal R_\omega|
 \le C_{n,s}\bigl(\headnorm+\|u\|_{L^2(\Sph)}^2+\forcenorm\bigr).
\end{equation}

For the head term, H\"older's inequality gives
\begin{align*}
 \int_{B_{2R}(\omega)}
       \frac{H_+(x)}{|x-\omega|^{s+2}}\dd x
 &\le\|H_+\|_{L^\theta(B_{2R}(\omega))}
    \left(\int_{B_{2R}(\omega)}
       |x-\omega|^{-(s+2)\theta/(\theta-1)}\dd x
    \right)^{(\theta-1)/\theta}\\
 &\quad\le C_{n,s}\headnorm.
\end{align*}
The kernel integral is finite precisely when $\theta>n/(n-2-s)$; this follows from $n-2-s>2$ and $\theta>n/2$ by the definition of $\theta$. Discard the first nonnegative integral in \eqref{eq:subcritical-pressure-identity}, replace $H$ by $H_+$, and use \eqref{eq:subcritical-remainder}. Since $\zeta=1$ on $B_R(\omega)$, division by the positive coefficient $s(n-4-s)/2$ proves \eqref{eq:local-pressure-bound}.
\end{proof}

\begin{lemma}\label{lem:pressure-growth}
Fix $\rho_0=1/1024$, and let $B^{\Sph}_\rho(\omega)$ denote the geodesic ball of radius $\rho$ in $\Sph$. For every $0<s<n-4$,
\begin{equation}\label{eq:cap-growth}
 \sup_{\substack{\omega\in\Sph\\0<\rho\le\rho_0}}
       \rho^{-s-1}\int_{B^{\Sph}_\rho(\omega)}|u|^2\dd\sigma
 \le C_{n,s}\bigl(\headnorm+\|u\|_{L^2(\Sph)}^2+\forcenorm\bigr).
\end{equation}
\end{lemma}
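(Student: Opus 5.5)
The plan is to derive \eqref{eq:cap-growth} from Lemma~\ref{lem:local-pressure} by thickening each spherical cap radially and using homogeneity. Fix $\omega\in\Sph$ and $0<\rho\le\rho_0$. Consider the conical region
\[
 K_\rho(\omega)=\{r\eta:\ \eta\in B^{\Sph}_\rho(\omega),\ 1\le r\le 1+\rho\}.
\]

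First, I will check that $K_\rho(\omega)$ sits well inside the Euclidean ball around $\omega$.
\begin{itemize}
\item For $x=r\eta\in K_\rho(\omega)$ we have $|x-\omega|\le|r\eta-\eta|+|\eta-\omega|\le\rho+\rho=2\rho$, since chordal distance is at most geodesic distance.
\item Because $2\rho\le 2\rho_0<R=1/32$, this gives $K_\rho(\omega)\subset B_R(\omega)$.
\item On $K_\rho(\omega)$ the weight therefore satisfies $|x-\omega|^{-s-2}\ge(2\rho)^{-s-2}$.
\end{itemize}

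Second, I will compute the integral of $|u|^2$ over $K_\rho(\omega)$ in polar coordinates. Using $u(r\eta)=r^{-1}u(\eta)$,
\[
 \int_{K_\rho(\omega)}|u|^2\dd x=\int_1^{1+\rho}r^{n-3}\dd r\int_{B^{\Sph}_\rho(\omega)}|u|^2\dd\sigma\ \ge\ \rho\int_{B^{\Sph}_\rho(\omega)}|u|^2\dd\sigma .
\]
The inequality uses $r^{n-3}\ge1$ for $r\ge1$.

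Combining the two steps with \eqref{eq:local-pressure-bound} gives
\[
 \rho\,(2\rho)^{-s-2}\int_{B^{\Sph}_\rho(\omega)}|u|^2\dd\sigma\le\int_{B_R(\omega)}\frac{|u|^2}{|x-\omega|^{s+2}}\dd x\le C_{n,s}\bigl(\headnorm+\|u\|_{L^2(\Sph)}^2+\forcenorm\bigr).
\]
Multiplying by $2^{s+2}$ yields \eqref{eq:cap-growth} with a constant depending only on $n$ and $s$. This constant is uniform in $\omega$ and $\rho$, because Lemma~\ref{lem:local-pressure} is uniform in $\omega$.

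There is no real obstacle here. The only points needing care are the exponent bookkeeping, where one power of $\rho$ from the radial thickness turns $s+2$ into $s+1$, and making sure the thickened cap lies inside $B_R(\omega)$. The latter is guaranteed by the choice $\rho_0=1/1024$.
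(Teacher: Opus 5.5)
Your proof is correct and takes essentially the paper's route: both combine Lemma~\ref{lem:local-pressure} with homogeneity and polar integration over a radially thickened cap, and the radial thickness supplies the single factor of $\rho$ that turns $s+2$ into $s+1$. The only difference is cosmetic. The paper first passes through the unweighted bound $r^{-s-2}\int_{B_r(\omega)}|u|^2\dd x$ on $B_{4\rho}(\omega)$, whereas you apply the weight directly on the explicit cone $K_\rho(\omega)\subset\overline{B_{2\rho}(\omega)}$, which makes explicit the containment the paper leaves implicit.
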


\begin{proof}
Fix $0<s<n-4$. Because $s+2>0$, for $0<r\le R=1/32$ one has
\[
 r^{-s-2}\int_{B_r(\omega)}|u|^2\dd x
 \le\int_{B_r(\omega)}
       \frac{|u(x)|^2}{|x-\omega|^{s+2}}\dd x.
\]
Lemma~\ref{lem:local-pressure} therefore gives, uniformly in $\omega\in\Sph$ and $0<r\le1/32$,
\begin{equation}\label{eq:ambient-growth}
 r^{-s-2}\int_{B_r(\omega)}|u|^2\dd x
 \le C_{n,s}\bigl(\headnorm+\|u\|_{L^2(\Sph)}^2+\forcenorm\bigr).
\end{equation}

 Homogeneity and polar integration give
\begin{align*}
 \int_{B_{4\rho}(\omega)}|u|^2\dd x
 &\ge c_n\rho\int_{B^{\Sph}_\rho(\omega)}|u|^2\dd\sigma.
\end{align*}
The last constant is uniform because $\rho\le1/1024$. Consequently,
\begin{align*}
 \rho^{-s-1}\int_{B^{\Sph}_\rho(\omega)}|u|^2\dd\sigma
 &\le C_{n,s}\rho^{-s-2}
                     \int_{B_{4\rho}(\omega)}|u|^2\dd x\\
 &\quad\le C_{n,s}\bigl(\headnorm+\|u\|_{L^2(\Sph)}^2+\forcenorm\bigr).
\end{align*}
The last line uses \eqref{eq:ambient-growth} at radius $4\rho\le1/256<1/32$; the factor $4^{s+2}$ is absorbed into $C_{n,s}$. Taking the supremum over the centers and radii proves \eqref{eq:cap-growth}.
\end{proof}

For $n\ge7$, set
\[
 q=\frac{(n-2)(n-1)}{3n-8}.
\]
The identities
\begin{equation}\label{eq:flux-exponent}
 q-2=\frac{(n-3)(n-6)}{3n-8}>0,
 \qquad \frac2q+\frac{n-6}{2\theta}=1
\end{equation}
give the H\"older exponents in the proposition below. The range of $s$ in the following proposition ensures that this $q$ is admissible in Lemma~\ref{lem:sphere-flux}.

\begin{proposition}[The positive Bernoulli estimate]\label{prop:head}
Let $n\ge7$, and fix
\[
 n-4-\frac2{q-1}<s<n-4.
\]
Every classical homogeneous Navier--Stokes solution under consideration satisfies
\begin{equation}\label{eq:head-unclosed}
\begin{aligned}
 \headnorm
 &\le C_{n,s}\forcenorm\bigl(1+\|u\|_{L^2(\Sph)}\bigr)\\
 &\quad+C_{n,s}\forcenorm\left(\growthsup[-s-1]\right)^{1/2}.
\end{aligned}
\end{equation}
The constants depend only on $n$ and $s$.
\end{proposition}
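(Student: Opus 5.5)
The proof will come from the Bernoulli equation \eqref{eq:bernoulli}, rewritten as a scalar inequality with a divergence-form source, tested with the power $H_+^{\frac{n-4}{2}}$. First discard the vorticity term, which is nonpositive. The force contributions $-\diver_{\Sph}f^t-(n-4)f^r+f\cdot u$ are then written as $\diver_{\Sph}F+g$. The main piece of $F$ is $-f^t$. The rest is obtained by setting $\phi=(-\Delta_{\Sph})^{-1}\bigl(f\cdot u-\overline{f\cdot u}\bigr)$, normalized to have zero mean, and adding $\nabla_{\Sph}\phi$ to $F$. The leftover $g$ is a mean value of size $C_n\forcenorm(1+\|u\|_{L^2(\Sph)})$. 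With these choices,
\[
 -\Delta_{\Sph}H+u^t\cdot\nabla_{\Sph}H+\bigl(2(n-4)-2u^r\bigr)H\le\diver_{\Sph}F+g .
\]

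Multiply by $H_+^{\beta}$ with $\beta=\frac{n-4}{2}$; this is justified by the usual truncation. Integrating the transport term by parts and using \eqref{eq:divergence} produces $\frac{n-2}{\beta+1}\int u^rH_+^{\beta+1}$. The reaction term produces $-2\int u^rH_+^{\beta+1}$. The choice $\beta+1=\frac{n-2}{2}$ makes these cancel exactly, as in \cite[Lemma 2.3]{B}. What remains is
\[
 \tfrac{4\beta}{(\beta+1)^2}\int|\nabla_{\Sph}H_+^{\frac{\beta+1}{2}}|^2+2(n-4)\int H_+^{\beta+1}\le\int|F|\,\beta H_+^{\beta-1}|\nabla_{\Sph}H_+|+|g|\int H_+^\beta .
\]
Write $w=H_+^{\frac{n-2}{4}}$. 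The Sobolev inequality on $\Sph$ gives $\|w\|_{L^{2^*}}^2\lesssim\|\nabla w\|_2^2+\|w\|_2^2$ with $2^*=\frac{2(n-1)}{n-3}$. Note that $\|w\|_{L^{2^*}}^2=\headnorm^{\frac{n-2}{2}}$, since $\frac{n-2}{4}\cdot\frac{2(n-1)}{n-3}=\theta$.

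For the $F$ term, write $H_+^{\beta-1}|\nabla H_+|=c\,|\nabla w|\,H_+^{\frac{n-6}{4}}$. Then apply H\"older with exponents $q$, $2$, and $\frac{4\theta}{n-6}$. This is exactly the pairing $\frac1q+\frac12+\frac{n-6}{4\theta}=1$ coming from \eqref{eq:flux-exponent}. Young's inequality absorbs $\|\nabla w\|_2^2$. The remaining quantity is $\|F\|_{L^q}^2\headnorm^{\frac{n-6}{2}}$. Combining this with Sobolev, and with the $g$ term bounded by H\"older on the finite sphere, gives
\[
 \headnorm^{\frac{n-2}{2}}\lesssim\|F\|_{L^q}^2\headnorm^{\frac{n-6}{2}}+|g|\headnorm^{\frac{n-4}{2}} .
\]
Young's inequality then yields $\headnorm\lesssim\|F\|_{L^q}+|g|$.

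It remains to bound $\|F\|_{L^q(\Sph)}$. Since $\|f^t\|_{L^q}\lesssim\forcenorm$, only $\nabla_{\Sph}\phi$ needs work. Write it through the gradient of the Green kernel, $|\nabla G(\omega,\eta)|\lesssim d(\omega,\eta)^{2-n}$, acting on $h=|f||u|$. The mean-zero correction is harmless. I expect this kernel estimate to be the main obstacle; it is the content of Lemma~\ref{lem:sphere-flux}, which I would prove, or simply invoke, as follows.

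Split the kernel into dyadic annuli $d\sim2^{-k}\rho_0$ plus a far part. The far part is bounded by $\|h\|_{L^1}\lesssim\forcenorm\|u\|_{L^2}$. Set $M=\growthsup[-s-1]$. Cauchy--Schwarz gives $\int_{B_\rho}h\le\forcenorm\,\rho^{\frac{n-1}{2}}\rho^{\frac{s+1}{2}}M^{1/2}$. Hence the annulus-$k$ operator maps $L^\infty$ to $L^\infty$ with norm $\lesssim\forcenorm M^{1/2}\rho_k^{\frac{s+1-(n-3)}{2}}$. The same operator is bounded on $L^1$ with norm $\rho_k$ times $\|h\|_{L^1}$. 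Interpolating between these two bounds in each annulus gives an $L^q$ norm with a positive power of $\rho_k$, and so a summable geometric series. The positive power requires $\frac{s+1-(n-3)}{2}\bigl(1-\frac1q\bigr)+\frac1q>0$, which rearranges to $s>n-4-\frac{2}{q-1}$. This is precisely the assumed range of $s$.

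Summing the annuli gives $\|F\|_{L^q}\lesssim\forcenorm(1+\|u\|_{L^2}+M^{1/2})$. Inserting this into the bound $\headnorm\lesssim\|F\|_{L^q}+|g|$ proves \eqref{eq:head-unclosed}.
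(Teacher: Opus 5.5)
Your proposal is correct and is essentially the paper's proof. It uses the same $H_+^{(n-4)/2}$ test with the transport--reaction cancellation, Sobolev embedding plus the H\"older pairing from \eqref{eq:flux-exponent} to get $\headnorm\lesssim\|F\|_{L^q(\Sph)}+\|g\|_{L^\infty(\Sph)}$, and the dyadic $L^1$--$L^\infty$ interpolation of Lemma~\ref{lem:sphere-flux}, fed by Cauchy--Schwarz against the $L^2$ growth quantity, with exactly the stated range of $s$.

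The differences are cosmetic. The paper uses the nonnegative source $|f||u|+(n-4)|f^r|$ and its mean $b$ instead of the signed $f\cdot u$. In your version you must add $-\nabla_{\Sph}\phi$, not $+\nabla_{\Sph}\phi$, to $F$, since $\diver_{\Sph}\nabla_{\Sph}\phi=-(f\cdot u-\overline{f\cdot u})$. Also, the leftover $\overline{f\cdot u}-(n-4)f^r$ is merely bounded rather than constant; this is harmless because only its supremum enters.
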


The proof uses the following estimate for an auxiliary vector field. Its proof is given immediately after the proof of Proposition~\ref{prop:head}.

\begin{lemma}\label{lem:sphere-flux}
Let $n>4$, let $0<s<n-4$ and $g\in L^1(\Sph)$, and fix $0<\rho_0<1/10$. Suppose
\[
 \sup_{\substack{z\in\Sph\\0<\rho\le\rho_0}}
 \rho^{-(n+s)/2}\int_{B^{\Sph}_\rho(z)}|g|\dd\sigma<\infty.
\]
With $\bar g=|\Sph|^{-1}\int_{\Sph}g\dd\sigma$, every fixed
\[
 1\le q<\frac{n-2-s}{n-4-s}=1+\frac2{n-4-s}
\]
satisfies
\begin{equation}\label{eq:sphere-flux-finite}
\begin{aligned}
 &\left\|\nabla_{\Sph}(-\Delta_{\Sph})^{-1}(g-\bar g)
                       \right\|_{L^q(\Sph)}\\
 &\qquad\le C_{n,s,q,\rho_0}\left[
 \|g\|_{L^1(\Sph)}+
 \sup_{\substack{z\in\Sph\\0<\rho\le\rho_0}}
 \rho^{-(n+s)/2}\int_{B^{\Sph}_\rho(z)}|g|\dd\sigma\right].
\end{aligned}
\end{equation}
The inverse is normalized to have zero mean, and the constant depends only on $n,s,q,$ and $\rho_0$.
\end{lemma}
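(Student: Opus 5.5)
We represent $\nabla_{\Sph}(-\Delta_{\Sph})^{-1}(g-\bar g)$ through the Green function $G(x,y)$ of $-\Delta_{\Sph}$ on mean-zero functions:
\[
 \nabla_{\Sph}(-\Delta_{\Sph})^{-1}(g-\bar g)(x)=\int_{\Sph}\nabla_xG(x,y)\,g(y)\dd\sigma(y).
\]
The mean $\bar g$ drops out because $\int_{\Sph}\nabla_xG(x,y)\dd\sigma(y)=0$. We will use the standard kernel bound $|\nabla_xG(x,y)|\le C_n d(x,y)^{1-n}$ for $d(x,y)\le\pi$, with $d$ the geodesic distance. Split the kernel as $K=K\chi_{\{d<\rho_0\}}+K\chi_{\{d\ge\rho_0\}}$. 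The far part is bounded by $C_{n,\rho_0}$, so its contribution is bounded pointwise by $C\|g\|_{L^1}$, hence in every $L^q$. For the near part, decompose dyadically into the annuli $A_k(x)=\{2^{-k-1}\rho_0\le d(x,y)<2^{-k}\rho_0\}$, $k\ge0$, and set
\[
 T_kg(x)=\int_{A_k(x)}|\nabla_xG(x,y)||g(y)|\dd\sigma(y).
\]

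Each $T_k$ is estimated twice. Write $r_k=2^{-k}\rho_0$ and let $M$ be the supremum in the hypothesis. The $L^\infty$ bound comes from the Morrey-type hypothesis:
\[
 \|T_kg\|_{L^\infty}\le Cr_k^{1-n}\int_{B_{r_k}(x)}|g|\le CMr_k^{1-n+(n+s)/2}=CMr_k^{-(n-2-s)/2}.
\]
The $L^1$ bound comes from Fubini and the fact that the annulus has measure comparable to $r_k^{n-1}$:
\[
 \|T_kg\|_{L^1}\le Cr_k^{1-n}\,r_k^{n-1}\|g\|_{L^1}=C\|g\|_{L^1}.
\]
This is too weak, since it gives no decay in $k$. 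It is better to use a localized $L^1$ bound, or simply interpolate: $\|T_kg\|_{L^q}\le\|T_kg\|_{L^\infty}^{1-1/q}\|T_kg\|_{L^1}^{1/q}$. This yields $r_k^{-(n-2-s)(1-1/q)/2}$, which grows as $k\to\infty$, so the plain interpolation does not sum.

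The correct $L^1$ bound must exploit the Morrey condition as well. Cover the sphere by $\sim r_k^{1-n}$ balls of radius $r_k$. On each ball $B$, the function $T_kg$ restricted to $B$ depends only on $g$ on $3B$. Its $L^1(B)$ norm is at most $C\int_{3B}|g|\le CMr_k^{(n+s)/2}$, and its $L^\infty$ norm is at most $CMr_k^{-(n-2-s)/2}$. Hence
\[
 \int_B|T_kg|^q\le\|T_kg\|_{L^\infty(B)}^{q-1}\|T_kg\|_{L^1(B)}\le CM^q r_k^{(n+s)/2-(q-1)(n-2-s)/2}.
\]
To sum over balls without losing the factor $r_k^{1-n}$ in the count, use instead
\[
 \sum_B\int_B|T_kg|^q\le\|T_kg\|_{L^\infty}^{q-1}\sum_B\|T_kg\|_{L^1(B)}\le CM^{q-1}r_k^{-(q-1)(n-2-s)/2}\|g\|_{L^1},
\]
which uses bounded overlap of the triples $3B$. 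This still grows in $k$. The key is to exploit that the estimate is on a unit-dimensional gain. On $\Sph$, the Morrey exponent $(n+s)/2$ compares with dimension $n-1$, so the natural $L^\infty$ scaling is $r^{1-n+(n+s)/2}$, while $L^q$ convergence needs $q(n-1-(n+s)/2)<n-1$ only for a genuinely spread measure.

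The main obstacle is therefore balancing the localized interpolation exponents so the dyadic series in $k$ converges geometrically exactly when $q<(n-2-s)/(n-4-s)$. I would first verify the exponent bookkeeping with the condition $\sum_k r_k^{\epsilon}<\infty$, and if necessary replace the crude $L^\infty$ bound by the mixed bound above, interpolated with $\|g\|_{L^1}$ via Young's inequality. The resulting geometric series, together with the far-field bound, gives \eqref{eq:sphere-flux-finite}.
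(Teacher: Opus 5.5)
There is a genuine gap, and it comes from your very first estimate. The bound $|\nabla_xG(x,y)|\le C_n d(x,y)^{1-n}$ is the Euclidean bound for $\R^n$. But $\Sph$ has dimension $n-1$, so the Green function is $O(d^{3-n})$ and its gradient satisfies $|\nabla_xG(x,y)|\le C_n d(x,y)^{2-n}$, which is what the paper uses. Your majorant is true but loses exactly one power of $d$, and no rearrangement of the interpolation can recover it.

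With $d^{1-n}$ as the only information, the shell bounds cannot decay in $k$. Take $g\equiv1$, which satisfies the hypothesis because $(n-2-s)/2>0$. Then $\int_{A_k(x)}d(x,y)^{1-n}\dd\sigma_y\approx\int_{r_k/2}^{r_k}t^{1-n}t^{n-2}\dd t=\log 2$ for every $k$, so the majorant $\sum_k T_k g$ is identically infinite. This is why your localized covering and ``mixed'' bounds still grow in $k$. It is also why your last paragraph remains a plan for ``balancing exponents'' rather than an argument; that balancing cannot succeed with this kernel bound.

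With the correct exponent $2-n$, the plain dyadic interpolation you discarded works directly, and this is the paper's proof. Let $M$ be the Morrey supremum. The hypothesis gives $\|T_kg\|_{L^\infty}\le CMr_k^{2-n+(n+s)/2}=CMr_k^{-(n-4-s)/2}$. Integrating in $x$ first over the shell gives $\|T_kg\|_{L^1}\le C\|g\|_{L^1}\int_{r_k/2}^{r_k}t^{2-n}t^{n-2}\dd t\le Cr_k\|g\|_{L^1}$, which now decays. Interpolating,
\[
 \|T_kg\|_{L^q(\Sph)}\le C\,r_k^{\frac{n-2-s}{2q}-\frac{n-4-s}{2}}\,M^{1-1/q}\|g\|_{L^1(\Sph)}^{1/q},
\]
and the exponent of $r_k$ is positive exactly when $q<(n-2-s)/(n-4-s)$. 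So the series in $k$ converges geometrically. The bound $M^{1-1/q}\|g\|_{L^1}^{1/q}\le M+\|g\|_{L^1}$ together with your (correct) far-field bound then gives the estimate.
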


\begin{proof}[Proof of Proposition~\ref{prop:head}]
Define the nonnegative source $g$, its mean $b$, and the auxiliary tangential vector field $F$ by
\begin{align*}
 g&=|f||u|+(n-4)|f^r|,
 \qquad b=\frac1{|\Sph|}\int_{\Sph}g\dd\sigma,\\
 F&=-\nabla_{\Sph}(-\Delta_{\Sph})^{-1}(g-b)-f^t.
\end{align*}
The inverse is normalized to have zero mean on the sphere. By the definition of $F$,
\[
 b+\diver_{\Sph}F=g-\diver_{\Sph}f^t.
\]
Dropping the nonpositive vorticity term in \eqref{eq:bernoulli} and using $f\cdot u\le |f||u|$ and $-(n-4)f^r\le(n-4)|f^r|$, we obtain
\begin{equation}\label{eq:scalar-equation}
 -\Delta_{\Sph}H+u^t\cdot\nabla_{\Sph}H
     +\bigl(2(n-4)-2u^r\bigr)H
 \le b+\diver_{\Sph}F.
\end{equation}
\emph{Step 1.} We prove that
\begin{equation}\label{eq:scalar-bound}
 \headnorm
 \le C_n\left(b+\|F\|_{L^q(\Sph)}\right).
\end{equation}
For each fixed classical solution, $g$ and $f^t$ are bounded on the sphere. The assumed range of $s$ ensures $0<s<n-4$ and $q<(n-2-s)/(n-4-s)$. Thus the growth quantity for $g$ in Lemma~\ref{lem:sphere-flux} is finite, and the lemma gives $F\in L^q(\Sph)$.

We test \eqref{eq:scalar-equation} with $H_+^{(n-4)/2}$. This test is admissible by nonnegative smooth approximation: both $H_+^{(n-4)/2}$ and $H_+^{(n-2)/4}$ are $C^1$, including at $H=0$, since their exponents exceed one for $n\ge7$. Integration by parts and \eqref{eq:divergence} give
\begin{align*}
 \int_{\Sph}(u^t\cdot\nabla_{\Sph}H)H_+^{(n-4)/2}\dd\sigma
 &=-\frac2{n-2}\int_{\Sph}(\diver_{\Sph}u^t)
                                      H_+^{(n-2)/2}\dd\sigma\\
 &=2\int_{\Sph}u^rH_+^{(n-2)/2}\dd\sigma.
\end{align*}
This term cancels the contribution of $-2u^rH$.

The chain rule therefore gives
\begin{align*}
 &\frac{8(n-4)}{(n-2)^2}
       \int_{\Sph}\left|\nabla_{\Sph}H_+^{(n-2)/4}\right|^2\dd\sigma
       +2(n-4)\int_{\Sph}H_+^{(n-2)/2}\dd\sigma\\
 &\quad\le b\int_{\Sph}H_+^{(n-4)/2}\dd\sigma
       -\frac{2(n-4)}{n-2}
       \int_{\Sph}F\cdot\nabla_{\Sph}H_+^{(n-2)/4}
                                      H_+^{(n-6)/4}\dd\sigma.
\end{align*}
Young's inequality gives the pointwise bound
\begin{align*}
 &\frac{2(n-4)}{n-2}|F|
     \left|\nabla_{\Sph}H_+^{(n-2)/4}\right|H_+^{(n-6)/4}\\
 &\quad\le\frac{4(n-4)}{(n-2)^2}
        \left|\nabla_{\Sph}H_+^{(n-2)/4}\right|^2
            +\frac{n-4}{4}|F|^2H_+^{(n-6)/2}.
\end{align*}
After absorbing the gradient term, the left-hand side controls the squared $H^1(\Sph)$ norm of $H_+^{(n-2)/4}$. Sobolev embedding on $\Sph$ then yields
\begin{align*}
 \headnorm^{(n-2)/2}
 &=\left\|H_+^{(n-2)/4}\right\|_{L^{2(n-1)/(n-3)}(\Sph)}^2\\
 &\le C_n\left[
        b\int_{\Sph}H_+^{(n-4)/2}\dd\sigma
          +\int_{\Sph}|F|^2H_+^{(n-6)/2}\dd\sigma\right].
\end{align*}
The equality follows from the definition of $\theta$. H\"older's inequality bounds the first integral by $C_n\headnorm^{(n-4)/2}$. For the second, the identity $2/q+(n-6)/(2\theta)=1$ in \eqref{eq:flux-exponent} gives
\[
 \int_{\Sph}|F|^2H_+^{(n-6)/2}\dd\sigma
 \le\|F\|_{L^q(\Sph)}^2\headnorm^{(n-6)/2}.
\]
Consequently,
\begin{equation}\label{eq:scalar-powers}
 \headnorm^{(n-2)/2}
 \le C_n b\,\headnorm^{(n-4)/2}
      +C_n\|F\|_{L^q(\Sph)}^2\headnorm^{(n-6)/2}.
\end{equation}
If $\headnorm=0$, the desired bound is immediate. Otherwise, dividing \eqref{eq:scalar-powers} by $\headnorm^{(n-6)/2}$ gives
\[
 \headnorm^2
 \le C_n b\,\headnorm
          +C_n\|F\|_{L^q(\Sph)}^2.
\]
Applying Young's inequality to $C_n b\,\headnorm$, absorbing half of $\headnorm^2$, and taking square roots proves \eqref{eq:scalar-bound}.

\emph{Step 2.} We prove that
\begin{equation}\label{eq:flux-bound}
\begin{aligned}
 &b+\|F\|_{L^q(\Sph)}
 \le C_{n,s}\forcenorm\bigl(1+\|u\|_{L^2(\Sph)}\bigr)\\
 &\qquad+C_{n,s}\forcenorm\left(\growthsup[-s-1]\right)^{1/2}.
\end{aligned}
\end{equation}

To estimate $F$, apply Lemma~\ref{lem:sphere-flux} to $g$ and use the definition of $F$ together with $\|f^t\|_{L^q(\Sph)}\le C_n\forcenorm$. Since $q$ depends only on $n$ and $\rho_0$ is fixed, this gives
\begin{align*}
 \|F\|_{L^q(\Sph)}
 &\le C_{n,s}\biggl[\|g\|_{L^1(\Sph)}+\forcenorm\\
 &\qquad+\sup_{\substack{\omega\in\Sph\\0<\rho\le\rho_0}}
       \rho^{-(n+s)/2}\int_{B^{\Sph}_\rho(\omega)}g\dd\sigma\biggr].
\end{align*}
The definition of $g$ and H\"older's inequality give
\begin{equation}\label{eq:source-mass}
 \|g\|_{L^1(\Sph)}
 \le C_n\forcenorm\bigl(1+\|u\|_{L^2(\Sph)}\bigr).
\end{equation}
For every $\omega\in\Sph$ and $0<\rho\le\rho_0$, the bound $|B^{\Sph}_\rho(\omega)|\le C_n\rho^{n-1}$ and H\"older's inequality give
\begin{equation}\label{eq:source-growth}
\begin{aligned}
 &\rho^{-(n+s)/2}\int_{B^{\Sph}_\rho(\omega)}g\dd\sigma\\
 &\quad\le C_n\forcenorm
   \left[\rho^{-(n+s)/2}\int_{B^{\Sph}_\rho(\omega)}|u|\dd\sigma
         +\rho^{(n-2-s)/2}\right]\\
 &\quad\le C_n\forcenorm
   \left[\left(\rho^{-s-1}\int_{B^{\Sph}_\rho(\omega)}|u|^2\dd\sigma\right)^{1/2}
         +\rho^{(n-2-s)/2}\right]\\
 &\quad\le C_n\forcenorm
   \left[1+\left(\growthsup[-s-1]\right)^{1/2}\right].
\end{aligned}
\end{equation}
Here the last inequality uses $(n-2-s)/2>1$ and $\rho\le\rho_0<1$.
Taking the supremum over $\omega$ and $\rho$ in \eqref{eq:source-growth}, inserting the resulting bound and \eqref{eq:source-mass} into the preceding estimate for $F$, and using $b=|\Sph|^{-1}\|g\|_{L^1(\Sph)}$ proves \eqref{eq:flux-bound}.
Combining \eqref{eq:scalar-bound} with \eqref{eq:flux-bound} proves \eqref{eq:head-unclosed}.
\end{proof}

\begin{proof}[Proof of Lemma~\ref{lem:sphere-flux}]
The proof follows the strategy of Wang and Yang \cite[Lemma 2.4]{WY}, which applies a near--far decomposition. Here we adapt this strategy to the sphere and obtain finite $L^q$ estimates by interpolating the $L^1$ and $L^\infty$ bounds on each dyadic annulus.

We use the standard Green representation for the nonnegative operator $-\Delta_{\Sph}$; see \cite[Theorem~4.13, pp.~108--109]{Aubin}. For $n>3$, its mean-zero Green function is symmetric and smooth away from the diagonal, and satisfies
\begin{align*}
 -\Delta_{\Sph,x}G(x,y)&=\delta_y-|\Sph|^{-1},
       &G(x,y)&=G(y,x),\\
 \int_{\Sph}G(x,y)\dd\sigma_x&=0,
       &\int_{\Sph}G(x,y)\dd\sigma_y&=0.
\end{align*}
Writing $d(x,y)$ for geodesic distance and $K(x,y)=\nabla_{\Sph,x}G(x,y)$, the same theorem gives
\[
 |G(x,y)|\le C_n\bigl(1+d(x,y)^{3-n}\bigr),
 \qquad
 |K(x,y)|\le C_n d(x,y)^{2-n}.
\]
The gradient bound is integrable on the sphere. Differentiating the zero-mean normalization in the distributional sense therefore gives
\[
 \int_{\Sph}K(x,y)\dd\sigma_y=0.
\]

The Green integral
\[
 (-\Delta_{\Sph})^{-1}(g-\bar g)(x)
       =\int_{\Sph}G(x,y)g(y)\dd\sigma_y
\]
defines an $L^1$ function of zero mean. Denote its gradient by $Tg$, so that
\[
 Tg(x):=\int_{\Sph}K(x,y)g(y)\dd\sigma_y.
\]
The mean $\bar g$ has zero contribution by kernel cancellation. The bound on $K$ is also integrable in $x$, uniformly in $y$, since its radial integral near the diagonal is bounded by
\[
 \int_{\Sph}|K(x,y)|\dd\sigma_x
 \le C_n\int_0^1 t^{2-n}t^{n-2}\dd t=C_n.
\]
Tonelli therefore gives $\|Tg\|_{L^1(\Sph)}\le C_n\|g\|_{L^1(\Sph)}$. This proves the case $q=1$ and justifies the gradient representation. The case $g=0$ is immediate; suppose henceforth that $\|g\|_{L^1(\Sph)}>0$ and $1<q<(n-2-s)/(n-4-s)$.

We split $Tg$ into its near and far parts:
\[
 Tg(x)=\int_{B^{\Sph}_{\rho_0}(x)}K(x,y)g(y)\dd\sigma_y
       +\int_{\Sph\setminus B^{\Sph}_{\rho_0}(x)}K(x,y)g(y)\dd\sigma_y.
\]

For each integer $j\ge0$, define the nonnegative shell contribution
\[
 T_j(x)=\int_{2^{-j-1}\rho_0<d(x,y)\le2^{-j}\rho_0}
                  d(x,y)^{2-n}|g(y)|\dd\sigma_y.
\]
On this shell, $d(x,y)^{2-n}\le(2^{-j-1}\rho_0)^{2-n}$. The assumed growth bound therefore gives
\[
 \|T_j\|_{L^\infty(\Sph)}
 \le C_n(2^{-j}\rho_0)^{-(n-4-s)/2}
 \times\sup_{\substack{z\in\Sph\\0<\rho\le\rho_0}}
       \rho^{-(n+s)/2}\int_{B^{\Sph}_\rho(z)}|g|\dd\sigma.
\]
For the $L^1$ bound, Tonelli permits integration in $x$ first. Geodesic polar coordinates centered at $y$ give
\[
 \int_{2^{-j-1}\rho_0<d(x,y)\le2^{-j}\rho_0}
               d(x,y)^{2-n}\dd\sigma_x
 \le C_n\int_{2^{-j-1}\rho_0}^{2^{-j}\rho_0}\dd t
 \le C_n2^{-j}\rho_0,
\]
Consequently,
\[
 \|T_j\|_{L^1(\Sph)}
 \le C_n2^{-j}\rho_0\|g\|_{L^1(\Sph)}.
\]
The elementary interpolation inequality
\[
 \|v\|_{L^q}\le\|v\|_{L^\infty}^{1-1/q}
                         \|v\|_{L^1}^{1/q}
\]
now yields
\begin{align*}
 \|T_j\|_{L^q(\Sph)}
 &\le C_n(2^{-j}\rho_0)^{\frac{n-2-s}{(2q)}-\frac{n-4-s}{2}}
                   \|g\|_{L^1(\Sph)}^{1/q}\\
 &\quad\times\left[
 \sup_{\substack{z\in\Sph\\0<\rho\le\rho_0}}
 \rho^{-(n+s)/2}\int_{B^{\Sph}_\rho(z)}|g|\dd\sigma
 \right]^{1-1/q}.
\end{align*}
The exponent of $2^{-j}\rho_0$ is positive precisely when $q<(n-2-s)/(n-4-s)$. Thus the geometric series in $j$ converges, and summing over $j$ controls the part with $d(x,y)\le\rho_0$:
\begin{align*}
      \left\|\int_{d(x,y) \le \rho_0} K(x,y) g(y) \dd \sigma_y \right\|_{L^q} & \le C_n \sum_{j=0}^{+\infty} \| T_j\|_{L^q} \\
      & \le C_{n,q,s,\rho_0} \left(\|g\|_{L^1} + \sup_{\substack{z\in\Sph\\0<\rho\le\rho_0}}
 \rho^{-(n+s)/2}\int_{B^{\Sph}_\rho(z)}|g|\dd\sigma \right) 
\end{align*}
On $d(x,y)>\rho_0$, the kernel $K(x,y)$ is bounded in terms of $n$ and $\rho_0$. Since the sphere has finite volume, the corresponding part has $L^q$ norm at most $C_{n,s,q,\rho_0}\|g\|_{L^1(\Sph)}$. Combining the near and far estimates bounds $\|Tg\|_{L^q(\Sph)}$ and proves \eqref{eq:sphere-flux-finite}.
\end{proof}

\begin{proposition}\label{prop:apriori}
For each fixed $n\ge7$, the classical homogeneous solutions satisfy
\begin{equation}\label{eq:head-final}
 \headnorm\le C_n\bigl(\forcenorm+\forcenorm^4\bigr)
\end{equation}
and
\begin{equation}\label{eq:energy-final}
\begin{aligned}
 &\energyintegral \le C_n\bigl(\forcenorm^2+\forcenorm^8\bigr).
\end{aligned}
\end{equation}
The constants depend only on $n$ and are independent of the particular solution.
\end{proposition}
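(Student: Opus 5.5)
We combine the three estimates already established: Proposition~\ref{prop:head}, Lemma~\ref{lem:pressure-growth}, and the energy bound \eqref{eq:energy-pre} of Lemma~\ref{lem:energy}. Fix one admissible $s$ depending only on $n$, for instance $s=n-4-\frac1{q}$. This lies in $(n-4-\frac2{q-1},n-4)$ because $\frac1q<\frac2{q-1}$ for $q>2$. Write $X=\headnorm$, $\varepsilon=\forcenorm$, and $E$ for the energy integral. Since $n-3>0$, we have $\|u\|_{L^2(\Sph)}^2\le C_nE$.

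\emph{Step 1: a closed inequality for $X$.} Apply Lemma~\ref{lem:pressure-growth} with the same $s$ and insert it into \eqref{eq:head-unclosed}. Using $\sqrt{a+b+c}\le\sqrt a+\sqrt b+\sqrt c$, this gives
\[
 X\le C_n\varepsilon\bigl(1+\|u\|_{L^2(\Sph)}+X^{1/2}+\varepsilon^{1/2}\bigr).
\]
Next insert $\|u\|_{L^2(\Sph)}\le C_nE^{1/2}$ together with \eqref{eq:energy-pre}, which gives $E^{1/2}\le C_n\bigl(\varepsilon+(X+\varepsilon)^{3/4}\bigr)$. The result is
\[
 X\le C_n\bigl(\varepsilon+\varepsilon^{3/2}+\varepsilon^2+\varepsilon X^{1/2}+\varepsilon X^{3/4}+\varepsilon^{7/4}\bigr).
\]

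\emph{Step 2: absorption by Young's inequality.} We use $\varepsilon X^{1/2}\le\frac14X+C\varepsilon^2$ and $\varepsilon X^{3/4}\le\frac14X+C\varepsilon^4$, the latter with exponents $4$ and $4/3$. Absorbing gives
\[
 X\le C_n(\varepsilon+\varepsilon^{3/2}+\varepsilon^{7/4}+\varepsilon^2+\varepsilon^4).
\]
Each intermediate power satisfies $\varepsilon^a\le\varepsilon+\varepsilon^4$ for $1\le a\le4$, which proves \eqref{eq:head-final}.

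This absorption requires $X<\infty$. That holds for each individual classical solution, since $H$ is continuous on the compact sphere. So the only real care is that the constants are solution-independent, which is guaranteed by the earlier lemmas.

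\emph{Step 3: the energy bound.} Substitute \eqref{eq:head-final} into \eqref{eq:energy-pre}. This gives
\[
 E\le C_n\bigl[\varepsilon^2+(\varepsilon+\varepsilon^4)^{3/2}\bigr]\le C_n\bigl(\varepsilon^2+\varepsilon^{3/2}+\varepsilon^6\bigr).
\]

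Here lies the main obstacle: the term $\varepsilon^{3/2}$ is not dominated by $\varepsilon^2+\varepsilon^8$ for small $\varepsilon$. So a sharper small-force treatment of the $(X+\varepsilon)^{3/2}$ term in \eqref{eq:energy-pre} is needed. I would redo the final Young step in the proof of Lemma~\ref{lem:energy} without the $\|u^r\|$ bound from the radial-mean identity. In \eqref{eq:energy-inequality}, bound $(n-4)\int H_+(u^r)_-\le C_nX E^{1/2}$ and the force terms by $C_n\varepsilon E^{1/2}$. This gives $E^{1/2}\le C_n(\varepsilon+X)$, hence $E\le C_n(\varepsilon+X)^2$, which is valid whenever $X$ is small.

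Combining the two regimes via \eqref{eq:head-final}:
\begin{itemize}
\item For $\varepsilon\le1$, we have $X\le C_n\varepsilon$, so $E\le C_n\varepsilon^2$.
\item For $\varepsilon\ge1$, the bound $E\le C_n(\varepsilon+\varepsilon^4)^{2}\le C_n\varepsilon^8$ holds.
\end{itemize}
Together these yield \eqref{eq:energy-final}. The exponent $8=2\cdot4$ confirms that this quadratic route is the intended one. The same quadratic bound could also be fed back in Step 1, but it is not needed.
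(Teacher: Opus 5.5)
Your proposal is correct and follows the paper's proof. The head bound comes from inserting Lemma~\ref{lem:pressure-growth} and \eqref{eq:energy-pre} into \eqref{eq:head-unclosed} and absorbing the $X^{1/2}$ and $X^{3/4}$ terms by Young's inequality; your $s=n-4-\frac1q$ works as well as the paper's $s=n-4-\frac1n$. The energy bound comes, exactly as in the paper, from returning to \eqref{eq:energy-inequality} to get $E\le C_n(X+\varepsilon)^2$.

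That inequality holds for every $X$, not only for small $X$ (and you do use it with large $X$ when $\varepsilon\ge1$). So your regime split can be replaced by the one-line bound $(\varepsilon+\varepsilon^4)^2\le2(\varepsilon^2+\varepsilon^8)$.
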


\begin{proof}
Fix $s=n-4-1/n$. For this choice,
\[
 q=\frac{(n-2)(n-1)}{3n-8}<n<2n+1
   =\frac{n-2-s}{n-4-s},
\]
so Proposition~\ref{prop:head} applies, and its constants $C_{n,s}$ depend only on $n$. With the same choice, \eqref{eq:cap-growth} of Lemma~\ref{lem:pressure-growth} gives
\begin{equation*}
 \growthsup \le C_n\bigl(\headnorm+\|u\|_{L^2(\Sph)}^2+\forcenorm\bigr).
\end{equation*}
Dropping the nonnegative gradient term from the energy integral gives
\[
 \|u\|_{L^2(\Sph)}^2\le\frac1{n-3}\energyintegral.
\]
Insert these two bounds into \eqref{eq:head-unclosed} and use the inequality that the square root of a sum is at most the sum of its square roots. We obtain
\begin{align*}
 \headnorm
 &\le C_n\forcenorm
          \left[1+\headnorm^{1/2}+\forcenorm^{1/2}\right]\\
 &\quad+C_n\forcenorm\left(\energyintegral\right)^{1/2}.
\end{align*}
The preliminary bound \eqref{eq:energy-pre} in Lemma~\ref{lem:energy} implies
\begin{align*}
 &\left(\energyintegral\right)^{1/2} \le C_n\left[
      \forcenorm+\headnorm^{3/4}+\forcenorm^{3/4}\right].
\end{align*}
Substitution gives
\begin{align*}
 \headnorm
 &\le C_n\forcenorm\headnorm^{1/2}
           +C_n\forcenorm\headnorm^{3/4}\\
 &\quad+C_n\left[
      \forcenorm+\forcenorm^{3/2}
                    +\forcenorm^2+\forcenorm^{7/4}\right].
\end{align*}
Both powers of the head norm on the right are strictly below one. Young's inequality gives, for every $\delta>0$,
\begin{align*}
 C_n\forcenorm\headnorm^{1/2}
 &\le\delta\headnorm+C_{n,\delta}\forcenorm^2,\\
 C_n\forcenorm\headnorm^{3/4}
 &\le\delta\headnorm+C_{n,\delta}\forcenorm^4.
\end{align*}
Take $\delta=1/4$ and absorb the terms $\headnorm$ to the left. Every remaining force power lies between $1$ and $4$, and is bounded by $\forcenorm+\forcenorm^4$. This proves \eqref{eq:head-final}.

For the final energy estimate, return to \eqref{eq:energy-inequality}. Since $\theta>2$, H\"older's inequality on the sphere and $\|u^r\|_{L^2(\Sph)}\le\|u\|_{L^2(\Sph)}$ give
\[
 \energyintegral
 \le C_n\bigl(\headnorm+\forcenorm\bigr)\|u\|_{L^2(\Sph)}.
\]
Insert the head bound \eqref{eq:head-final} and use the $L^2$ bound above. Young's inequality yields
\begin{align*}
 &\energyintegral\\
 &\quad\le C_n\bigl(\forcenorm+\forcenorm^4\bigr)
       \left(\energyintegral\right)^{1/2}\\
 &\quad\le\frac12\energyintegral
       +C_n\bigl(\forcenorm^2+\forcenorm^8\bigr).
\end{align*}
Absorbing half of the energy integral proves \eqref{eq:energy-final}.
\end{proof}

\section{Proof of the homogeneous theorem}\label{sec:existence}

We prove Theorem~\ref{thm:main} in the homogeneous setting, with $\theta=\frac{(n-2)(n-1)}{2(n-3)}$. We turn the estimates of Proposition~\ref{prop:apriori} into uniform classical bounds and then apply Leray--Schauder degree theory to a homogeneous Stokes operator. The construction is needed only for $n\ge17$; existence and small-force uniqueness for $4\le n\le16$ are established in \cite[Theorem~1.1]{B}.

We use the following local theorem of Li--Yang. It is stated for regular solutions and is valid in every dimension.

\begin{lemma}[Li--Yang, Proposition 2.3]\label{lem:local-regularity}
Let $n\ge2$, $\gamma>n/2$, and let $(u,p)$ be a regular solution of
\[
 -\Delta u+(u\cdot\nabla)u+\nabla p=g,
 \qquad\diver u=0\quad\text{in }B_1\subset\R^n,
\]
where $g\in L^\infty(B_1)$. Set $H=p+\frac12|u|^2$. If
\begin{align*}
 &\|u\|_{W^{1,2}(B_1)}+\|p\|_{W^{1,n/(n-1)}(B_1)} +\|g\|_{L^\infty(B_1)}
       +\|H_+\|_{L^\gamma(B_1)}\le C_0,
\end{align*}
then
\[
 \|u\|_{L^\infty(B_{1/2})}
       +\|\nabla u\|_{L^\infty(B_{1/2})}\le C,
\]
where $C$ depends only on $n,C_0$, and a positive lower bound for $\gamma-n/2$.
\end{lemma}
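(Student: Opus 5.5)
Since this is the Li--Yang criterion, I would reprove it along the lines of the weighted pressure argument of Lemma~\ref{lem:local-pressure}, adapted to a regular solution on $B_1$ without homogeneity. The plan has three stages. The first two turn the $L^\gamma$ bound on $H_+$ into Morrey control of $|u|^2$ and then into pointwise control of $H_+$. The third runs a Stokes bootstrap. All constants will depend only on $n$, $C_0$ and a lower bound for $\gamma-n/2$.

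\emph{Step 1 (Morrey bound).} For $x_0\in B_{3/4}$, test the pressure equation $-\Delta p=\partial_i\partial_j(u_iu_j)-\diver g$ with $\zeta|x-x_0|^{-s}$, regularized as in Lemma~\ref{lem:local-pressure}. Substituting $p=H-|u|^2/2$ reproduces \eqref{eq:subcritical-pressure-identity}. The remainder is controlled by $\|p\|_{L^1}$, $\|u\|_{L^2}^2$ and $\|g\|_{L^\infty}$, all bounded by $C_0$. The head term $\int H_+|x-x_0|^{-s-2}$ is finite by H\"older's inequality as long as $s+2<n(1-1/\gamma)$. Since $\gamma>n/2$, this permits every $s<n-4$, and in fact slightly more. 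The outcome is
\[
 \int_{B_r(x_0)}|u|^2\dd x\le C\,r^{n-2-\varepsilon}
\]
for every $\varepsilon>0$, uniformly in $x_0$ and $r$.

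\emph{Step 2 (bounding $H_+$).} The Morrey bound is slightly \emph{supercritical}, since the scale-invariant exponent is $n-2$. So it does not by itself feed a linear bootstrap, and the $L^\gamma$ hypothesis on $H_+$ has to be used a second time. Dropping the vorticity term, the Bernoulli function satisfies, weakly,
\[
 -\Delta H+u\cdot\nabla H\le g\cdot u-\diver g .
\]
The drift $u$ is divergence free and lies in the Morrey space from Step 1. I would run a De Giorgi iteration on truncations $(H-k)_+$, starting from $\|H_+\|_{L^\gamma}$. The condition $\gamma>n/2$ is exactly what makes the iteration subcritical for the zeroth-order data $g\cdot u$ and $\diver g$. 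The transport term is handled via $\int u\cdot\nabla\bigl((H-k)_+^2/2\bigr)\eta^2 = -\int (H-k)_+^2\,u\cdot\nabla\eta\,\eta$, which involves only cutoff derivatives. This should give $H_+\in L^\infty(B_{5/8})$, and hence $p\le H_+\le C$ there.

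\emph{Step 3 (closing and bootstrapping).} With $p$ bounded above, I would return to the weighted identity with $s$ slightly larger than $n-4$. The isotropic coefficient $s(n-4-s)/2$ is then slightly negative. I would compensate for it using the anisotropic term $s(s+2)|u\cdot(x-x_0)|^2$ together with the one-sided bound on $H$, averaging over centers $x_0$ if necessary. If this works, it gives $\int_{B_r}|u|^2\le Cr^{n-2+\delta}$, which is a subcritical Morrey bound. From there the bootstrap is standard. Represent $u$ through the local Stokes system with right-hand side $-\diver(u\otimes u)+g$, and use Riesz-potential (Adams) estimates in Morrey spaces to upgrade $u$ to $L^\infty$ and then to $C^\alpha$. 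Since $g\in L^\infty$, $W^{2,b}$ Stokes estimates then give $\nabla u\in L^\infty(B_{1/2})$.

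\emph{Main obstacle.} The hard part is crossing the critical exponent $n-2$ in Steps 2--3. The cancellation in Lemma~\ref{lem:local-pressure} only works for $s<n-4$. The De Giorgi step must be carried out with a drift that is merely critically Morrey, so its constants must not degenerate. My first attempt would be to exploit the divergence-free structure so that the drift enters only through cutoff terms, as described in Step 2. If that fails, I would run a small-scale $\varepsilon$-regularity argument: blow up at a putative bad point and use the $L^\gamma$ smallness of $H_+$ on small balls, which follows from $\gamma>n/2$ by scaling.
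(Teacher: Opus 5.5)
\textbf{There is a genuine gap.} The paper gives no proof of this lemma: it quotes Li--Yang \cite[Proposition~2.3]{LY} and only remarks that this local criterion has no dimensional restriction. Your reconstruction does not replace that citation, because it breaks down at Step~3, the step carrying all the content.

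In \eqref{eq:subcritical-pressure-identity} the barrier at $s=n-4$ is the sign of the isotropic coefficient $s(n-4-s)/2$, not the integrability of the head term. Already with $H_+\in L^\gamma$, $\gamma>n/2$, the head term is finite for all $s+2<n(1-1/\gamma)$, a range that extends past $n-2$. So upgrading $H_+$ to $L^\infty$ in Step~2 does not move the barrier.

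For $s\ge n-4$ the only positive term on the left is $s(s+2)\int|u\cdot(x-x_0)|^2\zeta|x-x_0|^{-s-4}\dd x$, which sees only the radial component. It vanishes identically for divergence-free fields tangent to the spheres about $x_0$, such as $\phi(|x-x_0|)\bigl(-(x-x_0)_2,(x-x_0)_1,0,\dots,0\bigr)$. So it cannot absorb a negative multiple of $\int|u|^2\zeta|x-x_0|^{-s-2}\dd x$. At $s=n-4$ the identity controls only $\int|u\cdot(x-x_0)|^2\zeta|x-x_0|^{-n}\dd x$ and $\int H_-\zeta|x-x_0|^{2-n}\dd x$.

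Averaging over a ball of centers does not help either. It amounts to testing with a radial function $\psi(\rho)$, $\rho=|x-\bar x|$, and the tangential part of $|u|^2$ then enters with coefficient $-\frac12\bigl(\psi''+(n-3)\psi'/\rho\bigr)$. This coefficient is nonnegative only if $\rho^{n-3}\psi'$ is nonincreasing, which for decreasing $\psi$ forces $\psi\le C\rho^{4-n}$. At the borderline $\psi=\rho^{4-n}$, which is exactly the choice that would put the critical weight $\rho^{2-n}$ on $|u|^2$, the coefficient vanishes.

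So Steps~1--2 leave you only with $\int_{B_r(x_0)}|u|^2\dd x\le C_\varepsilon r^{n-2-\varepsilon}$, where $C_\varepsilon\to\infty$ as $\varepsilon\to0$. This bound is supercritical, and your Morrey--Adams bootstrap never starts.

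Your fallback has the same defect. The rescalings $u_r(y)=ru(x_0+ry)$ satisfy only $\int_{B_1}|u_r|^2\dd y=r^{2-n}\int_{B_r(x_0)}|u|^2\dd x\le C_\varepsilon r^{-\varepsilon}$. Likewise $\int_{B_1}|\nabla u_r|^2\dd y=r^{4-n}\int_{B_r(x_0)}|\nabla u|^2\dd x$ is controlled only by $r^{4-n}C_0^2$, so you have no compactness.

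If you instead normalize by point-picking, the information you propose to carry to the limit ($g=0$, $H\le0$, local boundedness, $|u(0)|=1$) is satisfied by the constant flows $u\equiv a$, $p\equiv-|a|^2/2$ with $|a|=1$. No Liouville theorem can then produce a contradiction.

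What is missing is a genuinely scale-invariant bound on $u$, together with a mechanism that gives smallness at a uniform scale, to feed an $\varepsilon$-regularity argument. The single-center weighted pressure test cannot supply this. Either provide that ingredient or, as the paper does, cite \cite[Proposition~2.3]{LY}.
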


Lemma~\ref{lem:local-regularity} is quoted from \cite[Proposition~2.3, pp.~716--718]{LY}. The dimensional restriction in the existence theorem of \cite{LY} is not a restriction on this local result. Its regular-solution formulation suffices for the a priori argument below.

\begin{proposition}\label{prop:classical}
Let $n\ge17$. Every classical degree $-1$ homogeneous solution with force $f$ admits a degree $-2$ pressure and, for every $0<\alpha<1$, satisfies
\begin{equation}\label{eq:classical-bound}
 \|u\|_{C^{2,\alpha}(\Sph)}
 +\|p\|_{C^{1,\alpha}(\Sph)}
 \le C\bigl(n,\alpha,\|f\|_{\operatorname{Lip}(\Sph)}\bigr).
\end{equation}
The constant depends only on $n$, $\alpha$, and an upper bound for $\|f\|_{\Lip(\Sph)}$, and is independent of the particular solution.
\end{proposition}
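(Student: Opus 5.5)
The plan is to reduce to Lemma~\ref{lem:local-regularity} on unit-size balls centred on the sphere, and then to bootstrap with Schauder theory. First, Lemma~\ref{lem:pressure-normalization} gives the degree $-2$ representative of $p$. Since $n\ge17\ge7$, Proposition~\ref{prop:apriori} applies. It gives
\[
\|u\|_{H^1(\Sph)}+\headnorm\le C\bigl(n,\forcenorm\bigr).
\]
By \eqref{eq:pressure-one} we also get $\|p\|_{L^1(\Sph)}\le C$. Homogeneity transfers these bounds to the annulus $A=\{1/4<|x|<4\}$, with bounds on $\|u\|_{W^{1,2}(A)}$, $\|H_+\|_{L^\theta(A)}$, $\|p\|_{L^1(A)}$ and $\|f\|_{L^\infty(A)}$. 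Note that $\theta>n/2$ with $\theta-n/2=1/(n-3)$ depending only on $n$.

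The missing hypothesis of Lemma~\ref{lem:local-regularity} is the bound on $\|p\|_{W^{1,n/(n-1)}}$ on balls $B_{1/8}(\omega)$, $\omega\in\Sph$. I would get it from the pressure equation $-\Delta p=\partial_i\partial_j(u_iu_j)-\diver f$ on $A$. Localize with a cutoff and split $p$ into a Newtonian-potential part and a harmonic remainder. For the potential part, Calder\'on--Zygmund theory gives $\nabla p$ in terms of $\nabla(u\otimes u)\in L^{n/(n-1)}$, since $u\otimes\nabla u\in L^{n/(n-1)}$ by H\"older with $u\in L^{2n/(n-2)}$ (Sobolev) and $\nabla u\in L^2$; the force contributes a bounded term. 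The harmonic remainder is controlled in $C^1$ on a smaller ball by its $L^1$ norm, and hence by $\|p\|_{L^1}+\|u\|_{L^2}^2+\forcenorm$.

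Then rescale $B_{1/8}(\omega)$ to $B_1$. The scaling $u\mapsto\lambda u(\lambda\cdot)$ preserves the equation and changes the norms only by dimensional constants. Lemma~\ref{lem:local-regularity} then gives $|u|+|\nabla u|\le C$ near every $\omega\in\Sph$, so $u$ is bounded in $W^{1,\infty}(\Sph)$. Higher regularity comes from a bootstrap on the annulus. The nonlinearity $(u\cdot\nabla)u$ is bounded, and $f$ is Lipschitz. The pressure satisfies $-\Delta p=\partial_iu_j\partial_ju_i-\diver f$ with bounded right side, so $W^{2,b}$ estimates give $p\in W^{2,b}$ and hence $p\in C^{1,\alpha}$. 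The Stokes system then gives $u\in W^{2,b}\subset C^{1,\alpha}$. Now $(u\cdot\nabla)u$ and $f$ are $C^{0,\alpha}$, so Schauder estimates for the Stokes system give $u\in C^{2,\alpha}$ and $p\in C^{1,\alpha}$ on a slightly smaller annulus. Restricting to $\Sph$ gives \eqref{eq:classical-bound}, with constants depending only on $n,\alpha,\|f\|_{\Lip(\Sph)}$.

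The main obstacle is the $W^{1,n/(n-1)}$ pressure bound together with checking that every hypothesis of the quoted lemma is met uniformly; in particular, the regular-solution formulation holds because the solution is classical. The delicate point is controlling the harmonic part of $p$ purely by the $L^1$ bound \eqref{eq:pressure-one}, which in turn relies on the head estimate. If that splitting proves awkward, I would instead test the pressure equation by duality on the annulus, using the explicit degree $-2$ structure.
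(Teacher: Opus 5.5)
Your argument is correct. It has the same skeleton as the paper's proof: the a priori bounds of Proposition~\ref{prop:apriori}, then the Li--Yang criterion with $\gamma=\theta$ on rescaled balls covering the sphere, then a Stokes/Schauder bootstrap. The difference is how you obtain the missing $W^{1,\frac{n}{n-1}}$ pressure hypothesis.

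\textbf{The paper's route.} It works globally on the sphere. With $N=(u\cdot\nabla)u-f$, the degree $-2$ pressure solves $(-\Delta_{\Sph}+2(n-4))p=\diver_{\Sph}N^t+(n-4)N^r$. This operator has trivial kernel, so an elliptic estimate gives $\|p\|_{W^{1,\frac{n-1}{n-2}}(\Sph)}\le C_n\|N\|_{L^{\frac{n-1}{n-2}}(\Sph)}$. Sobolev embedding on $\Sph$ then bounds $N$ by the energy integral plus $\forcenorm$. This uses the homogeneous structure to control the full pressure, including its mean, directly from the energy. It needs no harmonic remainder, does not use \eqref{eq:pressure-one}, and gives the slightly better exponents $\frac{n-1}{n-3}$ and $\frac{n-1}{n-2}$. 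The price is a compactness argument for the elliptic estimate on the sphere.

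\textbf{Your route.} Your local Calder\'on--Zygmund splitting into a Newtonian potential plus a harmonic part is purely Euclidean and standard. The additive constant is fixed through the $L^1$ bound \eqref{eq:pressure-one}, which is legitimately available because Proposition~\ref{prop:apriori} bounds $\headnorm$ and $\|u\|_{L^2(\Sph)}$. Your fallback of exploiting the degree $-2$ structure of the pressure equation is essentially the paper's route.

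\textbf{Two small corrections.} First, Calder\'on--Zygmund operators are not bounded on $L^1$. So the $L^1$ norm of the potential part $R_iR_j(\eta u_iu_j)$ has to be bounded through $|u|^2\in L^{\frac{n}{n-2}}$, that is, by $\|u\|_{W^{1,2}}^2$ rather than $\|u\|_{L^2}^2$. Second, the force contribution $\nabla\partial_i\Gamma*(\eta f_i)$ lies in every $L^b_{\mathrm{loc}}$ with $b<\infty$, not in $L^\infty$. Both points are harmless, since the $H^1$ bound is already in hand and $L^{\frac{n}{n-1}}$ is all you need.
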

\begin{proof}
We use the degree $-2$ pressure representative fixed by Lemma~\ref{lem:pressure-normalization}.

Set $N=(u\cdot\nabla)u-f$, a degree $-3$ vector field. Taking divergence of the equation gives $-\Delta p=\diver N$. Its spherical form is
\begin{equation}\label{eq:pressure-sphere}
 \bigl(-\Delta_{\Sph}+2(n-4)\bigr)p
       =\diver_{\Sph}N^t+(n-4)N^r.
\end{equation}
All fields in this identity are restricted to $\Sph$. The positive constant term and compact elliptic estimates give
\[
 \|p\|_{W^{1,\frac{n-1}{n-2}}(\Sph)}
       \le C_n\|N\|_{L^{\frac{n-1}{n-2}}(\Sph)}.
\]
Indeed, the divergence-data estimate in local charts has only an additional lower norm of $p$. That term is removed by compactness and the trivial kernel of $-\Delta_{\Sph}+2(n-4)$: a normalized sequence contradicting the estimate converges strongly in the lower norm to a homogeneous-equation solution, and testing that smooth limit by itself forces it to vanish. The divergence on the right costs only the displayed norm of $N$.

Sobolev embedding on $\Sph$, the energy integral, and H\"older's inequality imply
\begin{align*}
 \|u\|_{L^{\frac{2(n-1)}{n-3}}(\Sph)}
 &\le C_n\left(\energyintegral\right)^{\frac{1}{2}},\\
 \|\nabla u|_{\Sph}\|_{L^2(\Sph)}
 &\le\left(\energyintegral\right)^{\frac{1}{2}},\\
 \|N\|_{L^{\frac{n-1}{n-2}}(\Sph)}
 &\le C_n\left(\energyintegral+\forcenorm\right).
\end{align*}
The H\"older exponents satisfy
\[
 \frac{n-2}{n-1}=\frac{n-3}{2(n-1)}+\frac{1}{2}.
\]
Using the Sobolev conjugate $\frac{n-1}{n-3}$ and
$\nabla p|_{\Sph}=\nabla_{\Sph}p-2pe_r$, we obtain
\begin{equation}\label{eq:pressure-bound}
\begin{aligned}
 &\|p\|_{L^{\frac{n-1}{n-3}}(\Sph)}
       +\|\nabla p|_{\Sph}\|_{L^{\frac{n-1}{n-2}}(\Sph)}\\
 &\qquad\le C_n\left(\energyintegral+\forcenorm\right).
\end{aligned}
\end{equation}
This estimates the actual pressure, with its mean retained. In fact, integration of \eqref{eq:pressure-sphere} yields
\[
 2\int_{\Sph}p\dd\sigma=\int_{\Sph}N^r\dd\sigma.
\]

Fix $R>0$ small and finitely many centers $x_j\in\Sph$ such that $B_R(x_j)$ cover the sphere and $B_{2R}(x_j)$ remain in a fixed annulus away from zero. Homogeneity converts norms on $\Sph$ into annular norms with finite radial factors. Proposition~\ref{prop:apriori} and \eqref{eq:pressure-bound} therefore imply
\begin{align*}
 &\|u\|_{W^{1,2}(B_{2R}(x_j))}
 +\|p\|_{W^{1,\frac{n}{n-1}}(B_{2R}(x_j))}\\
 &\qquad+\|f\|_{L^\infty(B_{2R}(x_j))}
       +\|H_+\|_{L^\theta(B_{2R}(x_j))}
       \le C(n,\forcenorm).
\end{align*}
The pressure exponents suffice since
\[
 \frac{n-1}{n-3}>\frac{n-1}{n-2}>\frac{n}{n-1},
 \qquad \theta-\frac{n}{2}=\frac{1}{n-3}>0.
\]
Rescaling each doubled ball to $B_1$ by
\[
 \widetilde u(y)=2R\,u(x_j+2Ry),\qquad
 \widetilde p(y)=(2R)^2p(x_j+2Ry)
\]
gives force $(2R)^3f(x_j+2Ry)$ and head $(2R)^2H(x_j+2Ry)$. Since $R$ is fixed, all hypotheses of Lemma~\ref{lem:local-regularity} hold uniformly with $\gamma=\theta$. It follows that $u$ and $\nabla u$ are uniformly bounded on the sphere and, by homogeneity, on a fixed annulus $A = \{1/2 \le |x| \le 2 \}$.
 
We conclude by standard Stokes regularity. The interior $L^b$ estimates for the Stokes system with bounded right side $f-u\cdot\nabla u$ give uniform $W^{2,b}$ bounds for $u$ on smaller fixed annuli for every $1<b<\infty$; see \cite[Section IV.4]{Galdi}. Morrey's embedding \cite[Theorem 7.26]{GT}, with $b$ sufficiently large, then gives uniform $C^{1,\beta}$ bounds for some $\alpha<\beta<1$. Since $f$ is Lipschitz, $f-u\cdot\nabla u$ is uniformly bounded in $C^\alpha$. The interior Stokes Schauder estimates \cite[Section IV.7]{Galdi} give uniform $C^{2,\alpha}$ bounds for $u$ and $C^{1,\alpha}$ bounds for $p$, whose additive constant is controlled by the preceding pressure estimate. Restricting to $\Sph$ proves \eqref{eq:classical-bound}. Repeating the regularity argument gives smoothness away from zero when $f$ is smooth.
\end{proof}

\begin{lemma}\label{lem:stokes-unique}
Let $n>4$. A classical pair $(z,q)$ of degrees $-1,-2$, respectively, satisfying $-\Delta z+\nabla q=0$ and $\diver z=0$ on $\mathbb{R}^n\setminus\{0\}$ is zero.
\end{lemma}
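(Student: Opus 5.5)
The plan is to reduce to the sphere, show first that the pressure vanishes, and then kill the velocity with the energy identity. This is the linear analogue of the computation in Lemma~\ref{lem:energy}, with the nonlinear and force terms absent.

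\emph{Step 1: the pressure vanishes.} Taking the divergence of $-\Delta z+\nabla q=0$ and using $\diver z=0$ gives $\Delta q=0$ in the sense of distributions on $\R^n\setminus\{0\}$. By Weyl's lemma, $q$ is smooth and harmonic there; $z$ is then smooth as well, by Stokes regularity. Since $q(r\omega)=r^{-2}q(\omega)$, the polar Laplacian formula from the proof of Lemma~\ref{lem:angular} with $\kappa=-2$ gives $\kappa(\kappa+n-2)=-2(n-4)$. Hence on $\Sph$
\[
 \bigl(-\Delta_{\Sph}+2(n-4)\bigr)q=0.
\]
This is the homogeneous version of \eqref{eq:pressure-sphere}. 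Multiplying by $q$ and integrating over $\Sph$ yields
\[
 \int_{\Sph}|\nabla_{\Sph}q|^2\dd\sigma+2(n-4)\int_{\Sph}q^2\dd\sigma=0 .
\]
Since $n>4$, this forces $q\equiv0$ on $\Sph$, and by homogeneity on $\R^n\setminus\{0\}$.

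\emph{Step 2: the velocity vanishes.} With $q=0$, the restriction to the sphere reads
\[
 -\Delta_{\Sph}z+(n-3)z=0,
\]
which is \eqref{eq:angular-momentum} without the transport, pressure, and force terms. Its derivation uses only the polar Laplacian formula with $\kappa=-1$. Dotting with $z$ and integrating by parts (componentwise on the Cartesian components) gives
\[
 \int_{\Sph}\bigl(|\nabla_{\Sph}z|^2+(n-3)|z|^2\bigr)\dd\sigma=0 .
\]
Because $n-3>0$, we get $z\equiv0$ on $\Sph$, hence everywhere by homogeneity. Equivalently, this is the exact energy identity \eqref{eq:exact-energy} with $u$ replaced by $z$, all quadratic terms removed, and $q=0$.

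\emph{Main obstacle.} The only delicate point is Step~1, namely that the degree $-2$ pressure cannot be a nonzero harmonic function. This is exactly where $n>4$ enters. For $n=4$, the function $|x|^{-2}$ is harmonic and homogeneous of degree $-2$, and the operator $-\Delta_{\Sph}+2(n-4)$ acquires the constants in its kernel. One must also note that $q$ is genuinely homogeneous as given, with no added constant. Otherwise we would argue as in Lemma~\ref{lem:pressure-normalization}: $\nabla q$ is homogeneous of degree $-3$, so any additive constant must be zero.
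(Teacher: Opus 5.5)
Your proof is correct and matches the paper's argument essentially verbatim. Both show that $\Delta q=0$ gives $(-\Delta_{\Sph}+2(n-4))q=0$ on the sphere and test it with $q$, and then show that each harmonic degree $-1$ component satisfies $(-\Delta_{\Sph}+n-3)z_j=0$ and test it with $z_j$; your remarks on Weyl's lemma and on the failure at $n=4$, where $|x|^{-2}$ is harmonic, are accurate.
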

\begin{proof}
Divergence gives $\Delta q=0$, so the degree $-2$ restriction satisfies $(-\Delta_{\Sph}+2(n-4))q=0$. Testing by $q$ on $\Sph$ gives $q=0$. Each component $z_j$ is then harmonic and degree $-1$, hence $(-\Delta_{\Sph}+n-3)z_j=0$ on the sphere. Testing by $z_j$ gives $z_j=0$. Elliptic regularity justifies the same argument if the equations initially hold distributionally.
\end{proof}

\begin{proof}[Proof of Theorem~\ref{thm:main}]
\emph{(i) Existence and regularity.} Fix $n\ge17$ and the prescribed force $f$. Following the degree theory argument of \cite[Section~4]{LY}, introduce the family
\begin{equation}\label{eq:homotopy}
 -\Delta u+(u\cdot\nabla)u+\nabla p=t f,
 \qquad\diver u=0,\qquad 0\le t\le1.
\end{equation}
Let $X$ be the Banach space of divergence-free degree $-1$ homogeneous $C^1$ fields on $\mathbb{R}^n\setminus\{0\}$, with norm
\[
 \|V\|_X=\|V|_{\Sph}\|_{L^\infty(\Sph)}
       +\|\nabla V|_{\Sph}\|_{L^\infty(\Sph)}.
\]
Restriction to the sphere identifies $X$ with a closed subspace of $C^1(\Sph;\R^n)$: radial differentiation is fixed by homogeneity and the divergence constraint is $\diver_{\Sph}V^t=-(n-2)V^r$. In particular the displayed norm is equivalent to the $C^1(\Sph)$ norm.

The Stokes velocity and pressure kernels are
\begin{align}
 \mathcal G_{ij}(x)
 &=\frac1{2(n-2)|\Sph|}
   \left(\frac{\delta_{ij}}{|x|^{n-2}}
             +(n-2)\frac{x_ix_j}{|x|^n}\right),\label{eq:Stokes-kernels}\\
 \Pi_j(x)&=\frac{x_j}{|\Sph|\,|x|^n}.
\end{align}
They satisfy $-\Delta\mathcal G_{ij}+\partial_i\Pi_j=\delta_{ij}\delta_0$ and $\partial_i\mathcal G_{ij}=0$ distributionally. For $0\le t\le1$ and $V\in X$, put
\[
 q_{t,V}=t f-(V\cdot\nabla)V
\]
and define
\begin{align*}
 T_i(t,V)(x)
 &=\int_{\R^n}\mathcal G_{ij}(x-y)q_{t,V,j}(y)\dd y,\\
 \pi_{t,V}(x)
 &=\int_{\R^n}\Pi_j(x-y)q_{t,V,j}(y)\dd y.
\end{align*}
Repeated indices are summed. $q_{t,V}$ is degree $-3$ and satisfies
\[
 |q_{t,V}(y)|
 \le C_n\bigl(t\forcenorm+\|V\|_X^2\bigr)|y|^{-3}.
\]
For fixed $x\ne0$, the integrals for $T$, $\nabla T$, and $\pi_{t,V}$ converge absolutely. Near $y=0$ the integrand is bounded by $|y|^{-3}$, which is integrable for $n>3$. Near $y=x$ the kernels have orders $|x-y|^{2-n}$, $|x-y|^{1-n}$, and $|x-y|^{1-n}$, while the data are bounded. At infinity they are bounded by $|y|^{-(n+1)}$, $|y|^{-(n+2)}$, and $|y|^{-(n+2)}$. 

The fundamental-solution identities and scaling give
\begin{equation}\label{eq:Stokes-map}
 -\Delta T+\nabla\pi_{t,V}
 =t f-(V\cdot\nabla)V,
 \qquad\diver T=0,
\end{equation}
and, for every $r>0$,
\[
 T(t,V)(rx)=r^{-1}T(t,V)(x),\qquad
 \pi_{t,V}(rx)=r^{-2}\pi_{t,V}(x).
\]
The integral estimates are uniform on the sphere and yield
\begin{equation}\label{eq:Stokes-map-bound}
 \|T(t,V)\|_X
       +\|\pi_{t,V}\|_{L^\infty(\Sph)}
 \le C_n\bigl(t\forcenorm+\|V\|_X^2\bigr).
\end{equation}
Thus $T$ maps into $X$.

On bounded subsets of $[0,1]\times X$, the right-hand side of \eqref{eq:Stokes-map} is uniformly bounded on the fixed annulus $A=\{1/2<|x|<2\}$. Set $A'=\{3/4<|x|<3/2\}\Subset A$. The standard interior Stokes estimates \cite[Section IV.4]{Galdi}, together with homogeneity, give, for every $1<b<\infty$,
\begin{equation}\label{eq:Stokes-map-compact}
\begin{aligned}
 &\|T(t,V)\|_{W^{2,b}(\Sph)}+\|\pi_{t,V}\|_{W^{1,b}(\Sph)}\\
 &\quad\le C_{n,b}\bigl(\|T(t,V)\|_{W^{2,b}(A')}+\|\pi_{t,V}\|_{W^{1,b}(A')}\bigr)\\
 &\quad\le C_{n,b}\bigl(\|t f-V\cdot\nabla V\|_{L^b(A)}
             +\|T(t,V)\|_{L^b(A)}+\|\pi_{t,V}\|_{L^b(A)}\bigr)\\
 &\quad\le C_{n,b}\bigl(t\forcenorm+\|V\|_X^2\bigr).
\end{aligned}
\end{equation}
The last inequality uses \eqref{eq:Stokes-map-bound} and homogeneity to control the lower norms. Choosing $b>n-1$ gives compact embedding into $C^1(\Sph)$, hence compactness in $X$.

For continuity, take $t_1,t_2\in[0,1]$ and $V_1,V_2\in X$, and use
\[
 (V_1\cdot\nabla)V_1-(V_2\cdot\nabla)V_2
 =((V_1-V_2)\cdot\nabla)V_1
       +(V_2\cdot\nabla)(V_1-V_2).
\]
The absolute kernel bounds give
\begin{equation}\label{eq:Stokes-map-continuity}
\begin{aligned}
 &\|T(t_1,V_1)-T(t_2,V_2)\|_X\\
 &\quad\le C_n|t_1-t_2|\forcenorm
       +C_n(\|V_1\|_X+\|V_2\|_X)\|V_1-V_2\|_X.
\end{aligned}
\end{equation}
Therefore $T:[0,1]\times X\to X$ is a continuous compact homotopy.

Every fixed point $u=T(t,u)$ is classical: interior Stokes regularity gives $u\in W^{2,b}_{\mathrm{loc}}$ for every finite $b>1$, and Sobolev embedding followed by the Stokes Schauder estimates gives $u\in C^{2,\alpha}_{\mathrm{loc}}$ and $\pi_{t,u}\in C^{1,\alpha}_{\mathrm{loc}}$ for every $0<\alpha<1$. These initial regularity bounds may depend on the fixed point. We can therefore apply the a priori estimates of Section~\ref{sec:apriori}.

For every $t\in[0,1]$, the force $t f$ is locally Lipschitz and homogeneous of degree $-3$. Proposition~\ref{prop:apriori} in Section~\ref{sec:apriori}, with $f$ replaced by $t f$, gives
\begin{equation}\label{eq:homotopy-head}
 \headnorm\le C_n\bigl(t\forcenorm+t^4\forcenorm^4\bigr)
\end{equation}
and
\begin{equation}\label{eq:homotopy-energy}
 \energyintegral
 \le C_n\bigl(t^2\forcenorm^2+t^8\forcenorm^8\bigr).
\end{equation}
These bounds are uniform for $0\le t\le1$. Moreover,
\[
 \|t f\|_{\Lip(\Sph)}=t\|f\|_{\Lip(\Sph)}
       \le\|f\|_{\Lip(\Sph)}.
\]
Proposition~\ref{prop:classical}, again with $f$ replaced by $t f$, therefore yields
\[
 \|u\|_{C^{2,\alpha}(\Sph)}+\|p\|_{C^{1,\alpha}(\Sph)}
 \le C\bigl(n,\alpha,\|f\|_{\Lip(\Sph)}\bigr),
\]
with a constant independent of $t$ and of the fixed point. In particular, all fixed points $u=T(t,u)$ are uniformly bounded in $X$.

For $t=0$, estimates \eqref{eq:homotopy-head} and \eqref{eq:homotopy-energy} have zero right sides. The energy integral vanishes, and its positive term $(n-3)\int_{\Sph}|u|^2\dd\sigma$ shows that $u=0$. Thus zero is the unique fixed point of $T(0,\cdot)$.

Moreover, the quadratic bound $\|T(0,V)\|_X\le C_n\|V\|_X^2$ in \eqref{eq:Stokes-map-bound} implies $D_VT(0,0)=0$. Thus $I-D_VT(0,0)$ is invertible, so the unique initial fixed point is nondegenerate. Leray--Schauder degree theory, applied to the compact homotopy $T$ and the uniform fixed-point bound, gives a fixed point of $T(1,\cdot)$; see \cite[Section 2.3 and Theorem 2.8.1]{Nirenberg}, as used in \cite[proof of Theorem 1.1]{LY}. This proves existence for $n\ge17$, with the required homogeneity and \eqref{eq:classical-bound}. Smoothness for smooth force follows by the same local regularity argument.

Existence and smoothness in $4\le n\le16$ follow from \cite[Theorem~1.1(i)--(ii)]{B}. This completes the existence and regularity assertions of Theorem~\ref{thm:main}.

\emph{(ii) Small-force uniqueness.} Let $n\ge17$ and take any classical homogeneous Navier--Stokes solution, with its normalized pressure. Subtraction of $(T(1,u),\pi_{1,u})$ produces a homogeneous Stokes pair to which Lemma~\ref{lem:stokes-unique} applies. Thus every such solution satisfies
\[
 u=T(1,u),\qquad p=\pi_{1,u}.
\]
The integral representation therefore imposes no smaller solution class.

All solutions with $\|f\|_{\operatorname{Lip}(\Sph)}\le1$ have a common $C^{2,1/2}(\Sph)$ velocity bound by Proposition~\ref{prop:classical}. Estimate~\eqref{eq:energy-final} gives
\[
 (n-3)\|u\|_{L^2(\Sph)}^2
 \le\energyintegral
 \le C_n\bigl(\forcenorm^2+\forcenorm^8\bigr).
\]
Their $X$ norms consequently tend uniformly to zero as the force Lipschitz norm tends to zero. Otherwise a sequence of such forces tending to zero and solutions bounded away from zero in $X$ would have a $C^1(\Sph)$ convergent subsequence, by the uniform classical bound. Its limit would have nonzero $X$ norm and zero $L^2$ norm, a contradiction.

For two solutions $u_1,u_2$ with the same force, \eqref{eq:Stokes-map-continuity} and their representations yield
\[
 \|u_1-u_2\|_X
 \le C_n(\|u_1\|_X+\|u_2\|_X)\|u_1-u_2\|_X.
\]
Choose the force threshold so that every solution has $X$ norm below $1/(4C_n)$. The coefficient is then less than one, and $u_1=u_2$. Two corresponding pressures have constant difference, which vanishes by degree $-2$ homogeneity. This proves uniqueness among all classical homogeneous solutions in the new range. Theorem~1.1(iii) of \cite{B} supplies $4\le n\le16$, completing Theorem~\ref{thm:main}.

\end{proof}

\section{The whole-space problem with compactly supported forces}\label{sec:whole-space}

This section gives the simplified proof of Theorem~\ref{thm:compact-force}. All fields are now defined on $\R^n$, and no homogeneity is assumed. The symbols $\theta$ and $q$ denote variable Lebesgue exponents subject to the hypotheses of each result; they are no longer fixed at the homogeneous values used in Sections~\ref{sec:apriori}--\ref{sec:existence}. We continue to write $H=p+\frac12|u|^2$ for the Bernoulli function, $f$ for the physical force, and $F$ for the auxiliary vector field. Here $F$ is $\R^n$-valued and is constructed without subtracting a spherical mean. Unless otherwise specified, all norms and integrals in this section are over $\R^n$. The two preliminary lemmas are stated under their own, more general assumptions.

The estimates parallel those of Section~\ref{sec:apriori}. The pressure calculation in Proposition~\ref{prop:compact-head} is the whole-space version of Lemma~\ref{lem:local-pressure}: decay permits removal of the outer cutoff, and we retain the weighted potential of $|u|^2$ directly, rather than converting it into the integral estimate of Lemma~\ref{lem:pressure-growth}. Lemma~\ref{lem:euclidean-kernel} corresponds to Lemma~\ref{lem:sphere-flux}; both estimate the gradient kernel by interpolation between its $L^1$ and $L^\infty$ bounds on dyadic annuli. On $\R^n$, the additional lower bound $q>n/(n-1)$ controls the part of the kernel at infinity.

Lemma~\ref{lem:euclidean-scalar} plays the role of the power test in Step~1 of Proposition~\ref{prop:head}. On the sphere, the chosen power cancels the tangential transport against the radial reaction term. In the whole space, $\diver u=0$ cancels the transport term for every admissible power, giving a family of finite-exponent estimates. Finally, Proposition~\ref{prop:compact-head} closes the estimates by Young's inequality, as in Proposition~\ref{prop:apriori}. The whole-space energy bound already depends only on $f$, whereas the preliminary homogeneous energy bound is coupled to $H_+$. This accounts for the simpler absorption in the whole-space proof.

\begin{lemma}\label{lem:euclidean-scalar}
Let $n>2$, let $\frac{2n}{n-2}\le\theta<\infty$, and define $q$ by $\frac1q=\frac1\theta+\frac1n$. Suppose $u\in C^1(\R^n;\R^n)$ satisfies $\diver u=0$ and $|u(x)|\le C_u(1+|x|)^{-1}$. Let $H\in W^{1,2}_{\mathrm{loc}}(\R^n)$ be locally bounded, with $H_+\in L^\theta(\R^n)$, and let $F\in L^q(\R^n;\R^n)$. If
\[
 -\Delta H+u\cdot\nabla H\le\diver F
\]
in distributions, then
\begin{equation}\label{eq:euclidean-scalar}
 \|H_+\|_{L^\theta}\le C_n\theta\|F\|_{L^q}.
\end{equation}
The constant is independent of $u$ and of the decay constant $C_u$.
\end{lemma}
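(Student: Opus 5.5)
We test the inequality with a power of $H_+$, exactly as in Step~1 of Proposition~\ref{prop:head}. On $\R^n$ the transport term cancels by $\diver u=0$ alone, so the power can be chosen freely. Set $\beta=\theta\frac{n-2}{2n}\ge1$, so that $2^*\beta=\theta$ with $2^*=\frac{2n}{n-2}$, and test with $\varphi=\eta_R^2H_+^{2\beta-1}$. Here $\eta_R$ is a standard cutoff, equal to $1$ on $B_R$, supported in $B_{2R}$, with $|\nabla\eta_R|\le C/R$.

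For $\beta\ge1$ the function $H_+^{2\beta-1}$ is Lipschitz in $H$ when $H$ is locally bounded. Hence $\varphi$ is an admissible nonnegative $W^{1,2}$ test function with compact support, via mollification. Write $w=H_+^{\beta}$. The tested inequality gives
\[
\frac{2\beta-1}{\beta^2}\int\eta_R^2|\nabla w|^2+\int \eta_R^2\,u\cdot\nabla\frac{H_+^{2\beta}}{2\beta}
\le -\int F\cdot\nabla\bigl(\eta_R^2H_+^{2\beta-1}\bigr)+\text{cutoff terms}.
\]
Using $\diver u=0$, the transport term becomes $-\frac1{2\beta}\int H_+^{2\beta}u\cdot\nabla\eta_R^2$. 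This is bounded by $C_u R^{-2}\int_{B_{2R}\setminus B_R}H_+^{2\beta}$. By H\"older with exponent $\theta/(2\beta)=n/(n-2)$, it is bounded by $C_uR^{-2}R^{2}\|H_+\|_{L^\theta(|x|>R)}^{2\beta}\to0$. The decay $|u|\le C_u/|x|$ is used only here, which is why the final constant does not depend on $C_u$. The diffusion cutoff term $\int |\nabla\eta_R|\,\eta_R\, H_+^{2\beta-1}|\nabla H_+|$ is treated the same way after Young's inequality: it yields $R^{-2}\int_{\text{annulus}}H_+^{2\beta}\to0$.

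Next handle the main $F$ term. We have $\nabla H_+^{2\beta-1}=\frac{2\beta-1}{\beta}H_+^{\beta-1}\nabla w$. Young's inequality gives
\[
\frac{2\beta-1}{\beta}|F|H_+^{\beta-1}|\nabla w|\le\frac{2\beta-1}{2\beta^2}|\nabla w|^2+C(2\beta-1)|F|^2H_+^{2\beta-2}.
\]
The gradient part is absorbed, and the remaining $F$--cutoff term $\int|F|\eta_R|\nabla\eta_R|H_+^{2\beta-1}$ also vanishes as $R\to\infty$. Letting $R\to\infty$ and applying the Sobolev inequality $\|w\|_{L^{2^*}}^2\le C_n\|\nabla w\|_{L^2}^2$, we get
\[
\|H_+\|_{L^\theta}^{2\beta}=\|w\|_{L^{2^*}}^2\le C_n\beta^2\int|F|^2H_+^{2\beta-2}.
\]
This requires $w\in L^{2^*}$ and a finite Dirichlet integral a priori. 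Both follow from the limit argument itself: the left side is monotone in $R$, and the right side is finite by the next step.

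Now apply H\"older with exponents $q/2$ and $\theta/(2\beta-2)$. These are conjugate because $\frac2q=\frac2\theta+\frac2n$ and $\frac{2\beta-2}{\theta}=\frac{n-2}{n}-\frac2\theta$, and the two sum to $1$. This gives $\int|F|^2H_+^{2\beta-2}\le\|F\|_{L^q}^2\|H_+\|_{L^\theta}^{2\beta-2}$. Dividing (trivial if $H_+=0$) yields $\|H_+\|_{L^\theta}\le C_n\beta\|F\|_{L^q}\le C_n\theta\|F\|_{L^q}$. The endpoint $\theta=2^*$ ($\beta=1$, $q=2$) is included.

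The main obstacle is the cutoff bookkeeping. Each boundary term must be shown to tend to zero using only $H_+\in L^\theta$, $F\in L^q$ and $|u|\lesssim|x|^{-1}$. Moreover, the term involving $|\nabla H_+|$ on the annulus must be absorbed into the gradient, not estimated directly, since only $W^{1,2}_{\mathrm{loc}}$ is assumed. I would check the scaling exponents carefully there; for instance, the $F$--cutoff term is bounded by $R^{-1}\|F\|_{L^q(\text{ann})}\|H_+\|^{2\beta-1}_{L^\theta}R^{n(1-1/q-(2\beta-1)/\theta)}$. That exponent equals $1$, so the term is $o(1)$ by tail smallness of $F$.
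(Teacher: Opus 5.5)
Your proposal is correct and is essentially the paper's own proof. Your $\beta$ is half of the paper's $\beta=(n-2)\theta/n$, so your test function $\eta_R^2H_+^{2\beta-1}$ is exactly the paper's $\chi_R^2H_+^{\beta-1}$, and the remaining steps also match the paper: the transport cancellation via $\diver u=0$, the H\"older argument on the annulus killing the cutoff errors, the Sobolev inequality for $H_+^{\beta}$, and the final H\"older split. The one cosmetic difference is that you bound the $F$--cutoff term directly by H\"older, and your exponent-one scaling check there is correct; the paper instead uses Young's inequality to fold it into $\int\chi_R^2|F|^2H_+^{2\beta-2}$ and $\int H_+^{2\beta}|\nabla\chi_R|^2$ (written in your normalization).
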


\begin{proof}
Choose $\chi\in C_c^\infty(B_2)$ with $0\le\chi\le1$ and $\chi=1$ on $B_1$, and set $\chi_R(x)=\chi(x/R)$ for $R\ge1$. Thus $\chi_R=1$ on $B_R$, $\supp\chi_R\subset B_{2R}$, and $|\nabla\chi_R|\le C/R$. Put
\[
 \beta=\frac{(n-2)\theta}{n}\ge2,
 \qquad \frac2q+\frac{\beta-2}{\theta}=1,
 \qquad \frac{\beta n}{n-2}=\theta.
\]
H\"older's inequality gives
\[
 \int|F|^2H_+^{\beta-2}
 \le\|F\|_{L^q}^2\|H_+\|_{L^\theta}^{\beta-2}<\infty.
\]
When $\beta=2$, the integral on the left is read as $\int|F|^2$ and $q=2$.

We test the equation with the nonnegative function $\chi_R^2H_+^{\beta-1}$. $\beta\ge2$, and $q\ge2$. Using $\diver u=0$, the transport term becomes
\[
 \int\chi_R^2(u\cdot\nabla H)H_+^{\beta-1}
 =\frac1\beta\int\chi_R^2u\cdot\nabla(H_+^\beta)
 =-\frac2\beta\int\chi_R H_+^\beta u\cdot\nabla\chi_R.
\]
Integration by parts therefore yields
\begin{align*}
 (\beta-1)\int\chi_R^2H_+^{\beta-2}|\nabla H_+|^2
 &\le-(\beta-1)\int\chi_R^2H_+^{\beta-2}F\cdot\nabla H_+\\
 &\quad-2\int\chi_RH_+^{\beta-1}(F+\nabla H_+)\cdot\nabla\chi_R\\
 &\quad+\frac2\beta\int\chi_RH_+^\beta u\cdot\nabla\chi_R.
\end{align*}
Divide by $\beta-1$ and apply Young's inequality to the terms containing $F$ and $\nabla H_+$. Absorb the terms with $\nabla H_+$ to the left side, giving
\begin{align*}
 \int\chi_R^2H_+^{\beta-2}|\nabla H_+|^2
 &\le C\int\chi_R^2|F|^2H_+^{\beta-2}
       +C\int H_+^\beta|\nabla\chi_R|^2\\
 &\quad+C\int\chi_R|u|H_+^\beta|\nabla\chi_R|,
\end{align*}
where $C$ is a numerical constant independent of $R$, $\beta$, and $u$.

Both cutoff errors tend to zero. Indeed, $|u(x)|\le C_u/R$ on $B_{2R}\setminus B_R$, and H\"older's inequality with $\beta/\theta=(n-2)/n$ gives
\begin{align*}
 &\int H_+^\beta|\nabla\chi_R|^2
       +\int\chi_R|u|H_+^\beta|\nabla\chi_R|\\
 &\qquad\le\frac{C_n(1+C_u)}{R^2}
       \int_{B_{2R}\setminus B_R}H_+^\beta
 \le C_n(1+C_u)
       \left(\int_{B_{2R}\setminus B_R}H_+^\theta\right)^{\frac{n-2}{n}}
 \longrightarrow0.
\end{align*}
The last limit follows from $H_+\in L^\theta(\R^n)$. Letting $R\to\infty$ in the cutoff energy estimate, using Fatou's lemma on the left and dominated convergence on the first term on the right, yields
\begin{equation}\label{eq:euclidean-power-energy}
 \int H_+^{\beta-2}|\nabla H_+|^2
 \le C\int|F|^2H_+^{\beta-2}.
\end{equation}
In particular, $\nabla H_+^{\frac\beta2}\in L^2(\R^n)$.

We now apply Sobolev embedding to the compactly supported function $\chi_RH_+^{\frac\beta2}$:
\begin{align*}
 \|\chi_RH_+^{\frac\beta2}\|_{L^{\frac{2n}{n-2}}}
 &\le C_n\|\nabla(\chi_RH_+^{\frac\beta2})\|_{L^2}\\
 &\le C_n\|\chi_R\nabla H_+^{\frac\beta2}\|_{L^2}
       +C_n\|H_+^{\frac\beta2}\nabla\chi_R\|_{L^2}.
\end{align*}
The last term tends to zero by the cutoff estimate above. Since $H_+^{\frac\beta2}\in L^{\frac{2n}{n-2}}$ and its gradient belongs to $L^2$, dominated convergence in the other two terms gives, by \eqref{eq:euclidean-power-energy} and H\"older's inequality,
\begin{align*}
 \|H_+\|_{L^\theta}^{\frac\beta2}
 &=\|H_+^{\frac\beta2}\|_{L^{\frac{2n}{n-2}}}
 \le C_n\|\nabla H_+^{\frac\beta2}\|_{L^2}\\
 &\le C_n\beta\left(\int|F|^2H_+^{\beta-2}\right)^{\frac12}
 \le C_n\beta\|F\|_{L^q}\|H_+\|_{L^\theta}^{\frac{\beta-2}{2}}.
\end{align*}
If $\|H_+\|_{L^\theta}>0$, divide by its power $(\beta-2)/2$ and use $\beta\le\theta$ to obtain \eqref{eq:euclidean-scalar}. If $H_+=0$, the conclusion is immediate. The decay constant $C_u$ occurs only in terms that vanish as $R\to\infty$, so it does not enter the final estimate.
\end{proof}

\begin{samepage}
\begin{lemma}[A Euclidean analogue of Lemma~\ref{lem:sphere-flux}]\label{lem:euclidean-kernel}
Let $n>4$, $0<s<n-4$, and
\[
 \frac n{n-1}<q<\frac{n-2-s}{n-4-s}.
\]
If $g\in L^1(\R^n)$ and $\sup_x\int_{\R^n}|g(y)|^2|x-y|^{-s-2}\dd y<\infty$, then
\begin{equation}\label{eq:euclidean-kernel}
 \|\nabla(-\Delta)^{-1}g\|_{L^q}
 \le C_{n,s,q}\left[\|g\|_{L^1}
 +\left(\sup_x\int_{\R^n}\frac{|g(y)|^2}{|x-y|^{s+2}}\dd y\right)^{\frac12}\right].
\end{equation}
Here $(-\Delta)^{-1}$ is the Newtonian potential on $\R^n$. Compact support is not required in this lemma.
\end{lemma}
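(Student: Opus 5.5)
The plan is to repeat the dyadic argument of Lemma~\ref{lem:sphere-flux} on all of $\R^n$. The new feature is the far region $|x-y|>1$. On the sphere it was handled by boundedness of the kernel and finite volume; here it will instead be handled by a second $L^1$--$L^\infty$ interpolation that uses only $\|g\|_{L^1}$, and this is where the lower bound $q>n/(n-1)$ is needed.

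\emph{Representation.} Write $\nabla(-\Delta)^{-1}g(x)=\int K(x-y)g(y)\dd y$ with $|K(z)|\le C_n|z|^{1-n}$. For $j\in\mathbb Z$, define the nonnegative shell functions
\[
 T_j(x)=\int_{2^{j-1}<|x-y|\le 2^j}|x-y|^{1-n}|g(y)|\dd y .
\]
Then $|\nabla(-\Delta)^{-1}g|\le C_n\sum_{j\in\mathbb Z}T_j$ pointwise. Once the sum is shown to lie in $L^q$, Tonelli shows that the integral converges absolutely for a.e.\ $x$, which justifies the representation. Minkowski's inequality then reduces the lemma to showing that $\sum_j\|T_j\|_{L^q}$ is bounded by the right-hand side of \eqref{eq:euclidean-kernel}. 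Set $M=\sup_x\int|g(y)|^2|x-y|^{-s-2}\dd y$.

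\emph{Common $L^1$ bound.} Integrating in $x$ first (Tonelli),
\[
 \|T_j\|_{L^1}\le C_n2^j\|g\|_{L^1}\quad\text{for every } j,
\]
since $\int_{2^{j-1}<|z|\le2^j}|z|^{1-n}\dd z\le C_n2^j$.

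\emph{Small shells, $j\le0$.} By Cauchy--Schwarz,
\[
 \int_{B_\rho(x)}|g|\le |B_\rho|^{1/2}\Bigl(\int_{B_\rho(x)}|g|^2\Bigr)^{1/2}\le C_n\rho^{n/2}\rho^{(s+2)/2}M^{1/2},
\]
because $|x-y|^{-s-2}\ge\rho^{-s-2}$ on $B_\rho(x)$. Hence
\[
 \|T_j\|_{L^\infty}\le C_n2^{j(1-n)}2^{j(n+s+2)/2}M^{1/2}=C_n2^{-j(n-4-s)/2}M^{1/2}.
\]
Interpolating with $\|v\|_{L^q}\le\|v\|_{L^\infty}^{1-1/q}\|v\|_{L^1}^{1/q}$ gives
\[
 \|T_j\|_{L^q}\le C_n2^{j\gamma}\|g\|_{L^1}^{1/q}M^{(1-1/q)/2},
 \qquad \gamma=\frac{n-2-s}{2q}-\frac{n-4-s}{2}.
\]
The exponent $\gamma$ is positive exactly when $q<(n-2-s)/(n-4-s)$, so the series over $j\le0$ converges geometrically. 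Young's inequality on the product then bounds this part by $C\bigl(\|g\|_{L^1}+M^{1/2}\bigr)$.

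\emph{Large shells, $j\ge1$.} Use the trivial bound $\|T_j\|_{L^\infty}\le C_n2^{j(1-n)}\|g\|_{L^1}$. Interpolating with the common $L^1$ bound gives
\[
 \|T_j\|_{L^q}\le C_n2^{j(1-n+n/q)}\|g\|_{L^1}.
\]
The exponent is negative exactly when $q>n/(n-1)$, so this geometric series also converges.

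Adding the two ranges proves \eqref{eq:euclidean-kernel}, with a constant depending only on $n,s,q$. The only delicate point is the bookkeeping of exponents. The two-sided restriction on $q$ is used once at each end of the dyadic scale: the upper bound for the shells near the diagonal, where the weighted $|g|^2$ hypothesis replaces the spherical growth condition, and the lower bound for the shells at infinity. No compact support of $g$ enters anywhere.
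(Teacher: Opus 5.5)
Your proof is correct and follows essentially the same route as the paper. Both arguments use the same shell decomposition of $|\nabla\Gamma|$ near the diagonal, the same bounds $\|T_j\|_{L^1}\le C_n2^{j}\|g\|_{L^1}$ and $\|T_j\|_{L^\infty}\le C_n2^{-j(n-4-s)/2}M^{1/2}$, and the same interpolation, with $q<(n-2-s)/(n-4-s)$ giving geometric convergence. The one difference is cosmetic: for $|x-y|>1$ the paper applies Young's convolution inequality with $\nabla\Gamma\,\mathbf 1_{\{|z|>1\}}\in L^q$, whereas you continue the dyadic interpolation for $j\ge1$ with $\|T_j\|_{L^\infty}\le C_n2^{j(1-n)}\|g\|_{L^1}$, which computes that same tail norm shell by shell and uses $q>n/(n-1)$ in the same way.
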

\end{samepage}

\begin{proof}
The proof follows the same near--far decomposition and annular interpolation as Lemma~\ref{lem:sphere-flux}. If $\Gamma$ is the fundamental solution of $-\Delta$, then $|\nabla\Gamma(z)|\le C_n|z|^{1-n}$. For $r_j=2^{-j}$, $j\ge0$, define
\[
 T_jg(x)=\int_{r_j/2<|x-y|\le r_j}|x-y|^{1-n}|g(y)|\dd y.
\]
Fubini-Tonelli's theorem gives
\[
 \|T_jg\|_{L^1}\le C_n r_j\|g\|_{L^1}.
\]
On the other hand, H\"older's inequality gives
\begin{align*}
 T_jg(x)
 &\le\left(\int_{r_j/2<|x-y|\le r_j}\frac{|g(y)|^2}{|x-y|^{s+2}}\dd y\right)^{\frac12}
       \left(\int_{r_j/2<|z|\le r_j}|z|^{s+4-2n}\dd z\right)^{\frac12}\\
 &\le C_{n,s}r_j^{-\frac{n-4-s}{2}}
       \left(\sup_z\int_{\R^n}\frac{|g(y)|^2}{|z-y|^{s+2}}\dd y\right)^{\frac12}.
\end{align*}
Interpolation between $L^1$ and $L^\infty$ therefore yields
\begin{align*}
 \|T_jg\|_{L^q}
 &\le C_{n,s,q}r_j^{\frac1q-\frac{n-4-s}{2}(1-\frac1q)}
       \|g\|_{L^1}^{\frac1q}
       \left(\sup_z\int_{\R^n}\frac{|g(y)|^2}{|z-y|^{s+2}}\dd y\right)^{\frac12(1-\frac1q)}.
\end{align*}
The power of $r_j$ is positive precisely when $q<(n-2-s)/(n-4-s)$. Summing over $j$ and applying weighted Young's inequality controls the part with $|x-y|\le1$ by the right side of \eqref{eq:euclidean-kernel}. For the remaining part $|x-y| > 1$, Young's convolution inequality gives
\[
 \|(\nabla\Gamma\,\mathbf1_{\{|z|>1\}})*g\|_{L^q}
 \le\|\nabla\Gamma\,\mathbf1_{\{|z|>1\}}\|_{L^q}\|g\|_{L^1}
 \le C_{n,q}\|g\|_{L^1},
\]
since $q>n/(n-1)$. These bounds also justify the gradient integral almost everywhere and complete the proof.
\end{proof}

\begin{proposition}[The compact-force Bernoulli estimate]\label{prop:compact-head}
Let $n\ge5$, let $f\in L^\infty(\R^n;\R^n)$ be supported in $B_{R_0}(0)$, and let $(u,p)$ be a regular solution of
\[
 -\Delta u+(u\cdot\nabla)u+\nabla p=f,\qquad\diver u=0\quad\text{in }\R^n.
\]
Suppose that, for a constant $C_u$ allowed to depend on this solution,
\begin{equation}\label{eq:compact-initial-decay}
 |u(x)|\le\frac{C_u}{(1+|x|)^{n-3}},\quad
 |\nabla u(x)|\le\frac{C_u}{(1+|x|)^{n-2}},\quad
 |p(x)|\le\frac{C_u\log(2+|x|)}{(1+|x|)^{n-1}}.
\end{equation}
Set $H=p+\frac12|u|^2$. For every finite $\theta\ge\frac{2n}{n-2}$ with $\theta>\frac n2$,
\begin{equation}\label{eq:compact-head-general}
 \|H_+\|_{L^\theta(\R^n)}\le C_{n,\theta,R_0,\|f\|_{L^\infty}},
\end{equation}
independently of $C_u$. In particular, $\|H_+\|_{L^n(\R^n)}\le C_{n,R_0,\|f\|_{L^\infty}}$.
\end{proposition}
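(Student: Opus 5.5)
The plan has four parts: build the scalar inequality for $H$, bound $F$ by the kernel lemma, control the weighted potential of $|u|^2$ by a pressure identity, and close by absorption.

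\emph{Scalar inequality.} The Bernoulli equation from the proof of Lemma~\ref{lem:angular} reads, in Euclidean form,
\[
-\Delta H+u\cdot\nabla H=-\tfrac12|\operatorname{curl}u|^2+f\cdot u-\diver f.
\]
Drop the vorticity term and set $g=|f||u|$. Define
\[
F=-\nabla(-\Delta)^{-1}g-f,
\]
so that $-\Delta H+u\cdot\nabla H\le\diver F$ in distributions. The decay \eqref{eq:compact-initial-decay} with $n\ge5$ gives $|u|\le C_u(1+|x|)^{-1}$. Since $|H|\lesssim(1+|x|)^{-(n-1)}\log$, we have $H_+\in L^\theta$ for every $\theta>n/(n-1)$, and $H$ is locally bounded and $W^{1,2}_{\rm loc}$. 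Lemma~\ref{lem:euclidean-scalar} therefore gives
\[
\|H_+\|_{L^\theta}\le C_n\theta\|F\|_{L^q},\qquad \tfrac1q=\tfrac1\theta+\tfrac1n.
\]
The condition $\theta>n/2$ means $q>n/3$, and $\theta<\infty$ means $q<n$. The term $f$ contributes only $\|f\|_{L^q}\le C(R_0,\|f\|_\infty)$.

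\emph{Kernel step.} Apply Lemma~\ref{lem:euclidean-kernel} to $g$ with $s$ chosen close to $n-4$, so that $(n-2-s)/(n-4-s)>q$. We also need $q>n/(n-1)$. This holds because $q\ge \tfrac{2n}{n}\cdot\tfrac{n-2}{2n}\cdots$; more directly, $\theta\ge 2n/(n-2)$ gives $1/q\le (n-2)/(2n)+1/n=1/2$, so $q\ge2>n/(n-1)$. Next, $\|g\|_{L^1}\le C\|f\|_\infty R_0^{n/2}\|u\|_{L^2(B_{R_0})}$, and
\[
\int|g|^2|x-y|^{-s-2}\,dy\le\|f\|_\infty^2\sup_x\int_{B_{R_0}}|u|^2|x-y|^{-s-2}\,dy=:\|f\|_\infty^2 M.
\]
The energy bound comes from testing with $u$: the decay makes boundary terms vanish, and the pressure and transport terms cancel. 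This gives $\|\nabla u\|_{L^2}\le C\|f\|_{L^{2n/(n+2)}}$, hence an $L^2(B_{R_0})$ bound depending only on $n,R_0,\|f\|_\infty$.

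\emph{Weighted pressure identity (main obstacle).} We need to bound $M$ linearly in $\|H_+\|_{L^\theta}$ plus data, uniformly in the center $x_0$. Repeat Lemma~\ref{lem:local-pressure} with test $\psi=|x-x_0|^{-s}$ and no outer cutoff. Insert an outer cutoff $\chi_R$ and let $R\to\infty$; the errors vanish because
\[
|u|^2+|p|=O\bigl(|x|^{-(n-1)}\log|x|\bigr)\quad\text{against}\quad |\nabla^2(\chi_R k)|\sim R^{-s-2},
\]
on a region of volume $R^n$, and $s+2+n-1>n$. The result is
\[
\tfrac{s(n-4-s)}2\int\frac{|u|^2}{|x-x_0|^{s+2}}\le s(n-2-s)\int\frac{H_+}{|x-x_0|^{s+2}}+\int|f||\nabla k|.
\]
The last term is at most $C(R_0,\|f\|_\infty)$. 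For the $H_+$ term, split into $B_1(x_0)$ and its complement. On $B_1(x_0)$ use H\"older with $L^\theta$, which is allowed since $\theta>n/(n-2-s)$ when $s$ is near $n-4$ and $\theta>n/2$. On the complement, the kernel $|x-x_0|^{-s-2}$ lies in $L^{\theta'}$ when $(s+2)\theta'>n$; this holds since $s+2$ is near $n-2$ and $\theta'\ge 1$, with the borderline handled by taking $s$ large. Hence
\[
M\le C\bigl(1+\|H_+\|_{L^\theta}\bigr).
\]
Checking that the limiting identity is valid needs care with the singular test function, handled by mollification as in Lemma~\ref{lem:local-pressure}, and with the logarithmic pressure decay.

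\emph{Closing.} Combining the three steps gives
\[
\|H_+\|_{L^\theta}\le C_f\bigl(1+\|H_+\|_{L^\theta}^{1/2}\bigr),
\]
and Young's inequality absorbs the square-root term. The resulting bound is finite and independent of $C_u$, since $C_u$ entered only qualitatively. Taking $\theta=n$ gives the final claim.
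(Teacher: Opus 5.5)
There is a genuine gap in the far-field part of the Bernoulli term in your weighted pressure identity. The rest of your route matches the paper: the same $g=|f||u|$ and $F$, the same two lemmas, the same weighted identity with $s$ near $n-4$, and the same Young absorption. Removing the outer cutoff via the qualitative decay is also fine, since those errors vanish.

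You bound $\int_{|x-x_0|\ge1}H_+|x-x_0|^{-s-2}\,dx$ by $\|H_+\|_{L^\theta}$ times the $L^{\theta'}$ norm of the truncated kernel. That requires $(s+2)\theta'>n$, which is equivalent to $\theta<n/(n-2-s)$. This is exactly the reverse of the near-field condition $\theta>n/(n-2-s)$ you use on $B_1(x_0)$, and it can never hold here. Indeed, $s<n-4$ (needed for the positive coefficient $s(n-4-s)/2$) and $\theta>n/2$ give $\theta'<n/(n-2)$, hence $(s+2)\theta'<(n-2)\cdot\frac{n}{n-2}=n$. Taking $s$ larger is not allowed. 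Using the decay hypothesis at infinity instead would bring $C_u$ into the bound. So with only $\|H_+\|_{L^\theta}$ available, your claim $M\le C(1+\|H_+\|_{L^\theta})$ is not established, and the closing inequality does not follow.

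The missing ingredient is a uniform bound on $H_+$ at infinity in a lower Lebesgue exponent. Your energy bound gives $\|u\|_{L^{2n/(n-2)}}\le C_f$. The Calder\'on--Zygmund estimate for $-\Delta p=\partial_i\partial_j(u_iu_j)-\diver f$, with $p$ the decaying pressure (this is Li--Yang's Lemma 2.2), then gives $\|p\|_{L^{n/(n-2)}}\le C_f$. For the force part, use $f\in L^{n/(n-1)}$ and Hardy--Littlewood--Sobolev. Since $H_+\le|p|+\frac12|u|^2$, this yields $\|H_+\|_{L^{n/(n-2)}}\le C_f$. Pair it with $|z|^{-s-2}\mathbf 1_{\{|z|\ge1\}}\in L^{n/2}$, which holds because $(s+2)n/2>n$; the far part is then at most $C_f$. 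This is exactly how the paper handles it. With this step added, $M\le C_f(1+\|H_+\|_{L^\theta})$, and the rest of your argument closes as written.
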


\begin{proof}
Write $C_f$ for constants depending only on $n$, $R_0$, and $\|f\|_{L^\infty}$. The pressure in \eqref{eq:compact-initial-decay} is normalized to vanish at infinity. Testing the velocity equation with $u$, using Sobolev embedding, and then applying the pressure estimates of \cite[Lemma 2.2]{LY} gives
\begin{equation}\label{eq:compact-basic}
 \|\nabla u\|_{L^2}+\|u\|_{L^{\frac{2n}{n-2}}}
 +\|\nabla p\|_{L^{\frac n{n-1}}}+\|p\|_{L^{\frac n{n-2}}}
 +\|H\|_{L^{\frac n{n-2}}}\le C_f.
\end{equation}
For example, the energy estimate follows from
\[
 \int|\nabla u|^2=\int f\cdot u
 \le\|f\|_{L^{\frac{2n}{n+2}}}\|u\|_{L^{\frac{2n}{n-2}}}
 \le C_n\|f\|_{L^{\frac{2n}{n+2}}}\|\nabla u\|_{L^2}.
\]
The initial decay justifies these integrations. It also implies $H_+\in L^\theta$ for each exponent in the statement before any uniform bound is proved.

\emph{Step 1. The weighted potential of $|u|^2$.} For $0<s<n-4$ and any $y\in\R^n$, the pressure equation $\Delta p=-\partial_i\partial_j(u_i u_j)+\diver f$ gives
\begin{equation}\label{eq:compact-weighted-identity}
\begin{aligned}
 &(s+2)\int\frac{|u(x)\cdot(x-y)|^2}{|x-y|^{s+4}}\dd x
 +\frac{n-4-s}{2}\int\frac{|u(x)|^2}{|x-y|^{s+2}}\dd x\\
 &\qquad=(n-2-s)\int\frac{H(x)}{|x-y|^{s+2}}\dd x
 +\int\frac{f(x)\cdot(x-y)}{|x-y|^{s+2}}\dd x.
\end{aligned}
\end{equation}
Indeed, test against $|x-y|^{-s}$ and use
\[
 \partial_{ij}|x-y|^{-s}
 =-s\frac{\delta_{ij}}{|x-y|^{s+2}}
 +s(s+2)\frac{(x_i-y_i)(x_j-y_j)}{|x-y|^{s+4}}
\]
Substituting $p=H-\frac12|u|^2$ and dividing by $s$ gives \eqref{eq:compact-weighted-identity}. This is the unlocalized form of the computation in \cite[Lemma 2.10]{LY}. To justify the singular test, remove a ball of radius $\varepsilon$ about $y$ and insert an outer cutoff at radius $R$. Local boundedness makes the inner error $O(\varepsilon^{n-s-2})$. H\"older's inequality and \eqref{eq:compact-basic} bound the outer errors by $C_f R^{-s}$. Both vanish in the respective limits.

Fix
\[
 s=n-4-\frac1n.
\]
The coefficient $n-4-s$ is positive. Discard the first term on the left of \eqref{eq:compact-weighted-identity} and replace $H$ by $H_+$. On $|x-y|<1$, H\"older's inequality applies because
\[
 (s+2)\frac\theta{\theta-1}<n,
\]
which follows from $\theta>n/2$. On $|x-y|\ge1$, use \eqref{eq:compact-basic} and $(s+2)n/2>n$. Thus
\begin{align*}
 \int\frac{H_+(x)}{|x-y|^{s+2}}\dd x
 &\le C_{n,\theta}\|H_+\|_{L^\theta}
 +\|H_+\|_{L^{\frac n{n-2}}}
       \left(\int_{|x-y|\ge1}|x-y|^{-\frac{n(s+2)}2}\dd x\right)^{\frac2n}\\
 &\le C_{n,\theta}\|H_+\|_{L^\theta}+C_f.
\end{align*}
The force term is bounded uniformly in $y$ by $C_n\|f\|_{L^\infty}+\|f\|_{L^1}$, using $|f|\,|x-y|^{-s-1}$ and splitting at distance one. We obtain
\begin{equation}\label{eq:compact-weighted-bound}
 \sup_y\int\frac{|u(x)|^2}{|x-y|^{s+2}}\dd x
 \le C_{n,\theta}\|H_+\|_{L^\theta}+C_f.
\end{equation}
Keeping $s<n-4$ is essential here: the coefficient of the $|u|^2$ term in \eqref{eq:compact-weighted-identity} vanishes at the Newtonian exponent $s+2=n-2$.

\emph{Step 2. The auxiliary vector field and closure.} Define
\[
 g=|f||u|,\qquad F=-\nabla(-\Delta)^{-1}g-f,
 \qquad q=\frac{n\theta}{n+\theta}.
\]
Then $\diver F=g-\diver f$. The energy bound and \eqref{eq:compact-weighted-bound} give
\begin{align*}
 \|g\|_{L^1}
 &\le\|f\|_{L^{\frac{2n}{n+2}}}\|u\|_{L^{\frac{2n}{n-2}}}\le C_f,\\
 \sup_x\int\frac{|g(y)|^2}{|x-y|^{s+2}}\dd y
 &\le\|f\|_{L^\infty}^2\sup_x\int\frac{|u(y)|^2}{|x-y|^{s+2}}\dd y
 \le C_{n,\theta,f}\bigl(1+\|H_+\|_{L^\theta}\bigr).
\end{align*}
The exponents satisfy
\[
 \frac n{n-1}<2\le q<n<2n+1=\frac{n-2-s}{n-4-s},
\]
so Lemma~\ref{lem:euclidean-kernel}, together with $f\in L^q$, yields
\begin{equation}\label{eq:compact-F}
 \|F\|_{L^q}\le C_{n,\theta,f}\bigl(1+\|H_+\|_{L^\theta}^{\frac12}\bigr).
\end{equation}
Here and below $C_{n,\theta,f}$ also includes the fixed support radius $R_0$.

The Bernoulli identity gives, in distributions,
\[
 -\Delta H+u\cdot\nabla H
 =-\frac12\sum_{i,j}(\partial_i u_j-\partial_j u_i)^2
     +f\cdot u-\diver f
 \le\diver F.
\]
All hypotheses of Lemma~\ref{lem:euclidean-scalar} hold by the initial regularity, \eqref{eq:compact-initial-decay}, and \eqref{eq:compact-F}. Combining the two lemmas and applying Young's inequality gives
\[
 \|H_+\|_{L^\theta}
 \le C_{n,\theta}\|F\|_{L^q}
 \le C_{n,\theta,f}\bigl(1+\|H_+\|_{L^\theta}^{\frac12}\bigr)
 \le\frac12\|H_+\|_{L^\theta}+C_{n,\theta,f}.
\]
This proves \eqref{eq:compact-head-general}. Taking $\theta=n$ gives the stated $L^n$ bound, sufficient for the local regularity criterion of \cite[Proposition 2.3]{LY}.
\end{proof}

\begin{proof}[Proof of Theorem~\ref{thm:compact-force}]
Fix $n\ge5$ and the prescribed compactly supported force $f$. We use the weighted space and Stokes construction of \cite{LY}. Specifically, let $C_d^1(\R^n)$ be the closure of the smooth compactly supported divergence-free fields under
\[
 \|V\|_{C_d^1}
 =\sup_{x\in\R^n}(1+|x|)^{n-3}|V(x)|
 +\sup_{x\in\R^n}(1+|x|)^{n-2}|\nabla V(x)|.
\]
With the Stokes kernels in \eqref{eq:Stokes-kernels}, define
\[
 T_i(t,V)(x)=\int_{\R^n}\mathcal G_{ij}(x-y)
       \bigl(tf_j(y)-V_k(y)\partial_kV_j(y)\bigr)\dd y,
 \qquad 0\le t\le1.
\]
The associated pressure is given by replacing $\mathcal G_{ij}$ with $\Pi_j$. The kernel estimates and local Stokes regularity in \cite[Lemma 4.1]{LY} show that $T:[0,1]\times C_d^1\to C_d^1$ is continuous and compact for every $n\ge5$. In particular, the argument only uses $2n-5\ge n$, with a harmless logarithm in the borderline case $n=5$; it imposes no upper bound on $n$.

Every fixed point $u=T(t,u)$ is regular, with $u\in W^{2,b}_{\mathrm{loc}}$ and its associated pressure in $W^{1,b}_{\mathrm{loc}}$ for all finite $b>1$. It satisfies \eqref{eq:compact-initial-decay}; see \cite[(2.1)--(2.2)]{LY}. The constants in those initial bounds may depend on the fixed point. Apply Proposition~\ref{prop:compact-head} with $f$ replaced by $tf$ and choose $\theta=n$. Since $\|tf\|_{L^\infty}\le\|f\|_{L^\infty}$ and $\supp(tf)\subset B_{R_0}$, it gives, uniformly for all fixed points and $0\le t\le1$,
\[
 \|H_+\|_{L^n(\R^n)}\le C_f.
\]
The basic bounds \eqref{eq:compact-basic}, again with force $tf$, and Lemma~\ref{lem:local-regularity} on translated unit balls therefore imply
\[
 \|u\|_{L^\infty(\R^n)}+\|\nabla u\|_{L^\infty(\R^n)}\le C_f.
\]
The local $W^{1,2}$ norm of $u$ and the local $W^{1,\frac n{n-1}}$ norm of $p$ required there follow from \eqref{eq:compact-basic} by H\"older's inequality on unit balls.

We can now apply the proof of \cite[Theorem 1.5, Section 3]{LY}. Its energy estimate outside a large ball, the vorticity regularity criterion, and the Stokes estimates give the uniform bound
\[
 \|u\|_{C_d^1(\R^n)}\le C_f.
\]
Those arguments hold for every $n\ge5$. The restriction $n\le15$ in their Bernoulli estimate is explicitly identified in \cite[Remark 2.12]{LY}; Proposition~\ref{prop:compact-head} replaces that estimate.

At $t=0$, testing the equation with $u$ gives $\int|\nabla u|^2=0$, so decay implies $u=0$. Also $T(0,V)$ is quadratic in $V$, whence $D_VT(0,0)=0$. The uniform fixed-point bound and Leray--Schauder degree theory therefore yield a fixed point of $T(1,\cdot)$, as in \cite[Section 4]{LY}; see \cite[Section 2.3 and Theorem 2.8.1]{Nirenberg}. The final kernel estimates in the proof of \cite[Theorem 1.1]{LY} improve the initial weighted decay to
\[
 |u(x)|\le\frac{C_f}{(1+|x|)^{n-2}},\qquad
 |\nabla u(x)|+|p(x)|\le\frac{C_f}{(1+|x|)^{n-1}}.
\]
Standard interior Stokes regularity gives the stated Sobolev regularity and smoothness for smooth $f$. This proves Theorem~\ref{thm:compact-force}.
\end{proof}

\section*{Acknowledgments}
I thank my thesis advisor, Professor Yanyan Li, for introducing me to this question.

\section*{Declaration on the use of artificial intelligence}
The author used ChatGPT during the preparation of this paper. The author has checked all mathematical arguments and details by hand and takes full responsibility for the content and accuracy of the paper.

\end{document}